\newcommand{\ve}{\varepsilon}
\newcommand{\ddbar}{\sqrt{-1} \partial \overline{\partial}}
\newcommand{\MA}{Monge-Amp\`{e}re}
\newcommand{\ri}{\rightarrow}
\newcommand{\DP}{Dirichlet problem}
\newcommand{\acm}{almost Hermitian manifold}
\begin{document}
\begin{CJK}{GBK}{song}
\newcounter{theor}
\setcounter{theor}{1}
\newtheorem{claim}{Claim}
\newtheorem{theorem}{Theorem}[section]
\newtheorem{lemma}[theorem]{Lemma}
\newtheorem{corollary}[theorem]{Corollary}
\newtheorem{proposition}[theorem]{Proposition}
\newtheorem{question}{question}[section]
\newtheorem{defn}{Definition}[section]
\newtheorem{remark}{Remark}[section]

\numberwithin{equation}{section}

\title[Monge-Amp\`{e}re type equations on almost Hermitian manifolds]{Monge-Amp\`{e}re type equations on almost Hermitian manifolds }
\author[J. Zhang]{Jiaogen Zhang}
\address{Department of Mathematical Sciences, University of Science and Technology of China, Hefei 230026, People's Republic of China }
\email{zjgmath@mail.ustc.edu.cn}
\subjclass[2010]{32Q60, 32W20, 30C80, 31C10.}
\keywords{Monge-Amp\`{e}re type equations, almost Hermitian manifold, a priori estimates, $\mathcal{C}$-subsolution, maximum principle}


\begin{abstract}
In this paper we consider the Monge-Amp\`{e}re type equations on compact almost Hermitian manifolds. We derive a priori estimates under the existence of an admissible $\mathcal{C}$-subsolution. Finally, we also obtain an existence theorem if there exists an admissible supersolution.
\end{abstract}
\maketitle

\section{Introduction}
Let $(M,J,\omega)$ be a compact \acm ~of real dimension $2n$. Suppose $\chi$ is a real (1,1) form on $M$.
For each $u\in C^{2}(M,\mathbb{R})$, we will use the shorthand $$\chi_{u}=\chi+\ddbar u.$$ Let
\[
\mathcal{H}(M)=\{u\in C^{2}(M,\mathbb{R}): \chi_{u}>0\}
\]
be the admissible set with respect to $\chi$. In this paper we wish to consider the \MA~type equations for $u$ which can be written in the form
\begin{equation}\label{dp}
\left\{\begin{array}{ll}
u\in \mathcal{H}(M),\\[1mm]
\chi_{u}^{n}=\psi\chi_{u}^{n-m}\wedge \omega^{m}~\textrm{on}~{M}, \\[1mm]
\end{array}\right.
\end{equation}
where $\psi\geq c>0$ is a smooth real valued function on $M$ and $m\in[1, n] $ is a fixed integer.

In contrast to the complex \MA~ equation \cite{CTW}, we need to further assume there exists at least one $\mathcal{C}$-subsolution (see Definition \ref{subsolution}) for (\ref{dp}), which was also specified in \cite{Gab,Gu14}. We have the following theorem.
\begin{theorem}\label{Thm1.1}
Let $(M,J,\omega)$ be a compact \acm ~of real dimension $2n$. Suppose there exists an admissible $\mathcal{C}$-subsolution $\underline{u}$ and $u$ is the smooth solution for  (\ref{dp}). Then we have estimates \[\|u\|_{C^{k,\alpha}}<C,\] where $C$ is a constant depends on $(M,J,\omega)$, $\chi$, $k$, $\alpha$, $\psi$ and $\underline{u}$.
\end{theorem}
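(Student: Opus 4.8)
The plan is to run the now-standard a priori estimate scheme for fully nonlinear elliptic equations admitting a $\mathcal{C}$-subsolution, in the spirit of Sz\'ekelyhidi's work on Hermitian manifolds, while inserting the torsion corrections needed to pass from the integrable to the genuinely almost complex setting, as developed by Chu--Tosatti--Weinkove for the complex \MA~equation \cite{CTW}. First I would put (\ref{dp}) in eigenvalue form: letting $\lambda(\chi_u)=(\lambda_1,\dots,\lambda_n)$ be the eigenvalues of $\chi_u$ with respect to $\omega$, the equation becomes $f(\lambda(\chi_u))=\tilde\psi^{1/m}$ (a combinatorial constant absorbed into $\tilde\psi$), where $f=(\sigma_n/\sigma_{n-m})^{1/m}$ is concave and strictly increasing on the positive octant $\Gamma_n=\{\lambda_i>0\ \forall i\}$ — consistent with $u\in\mathcal{H}(M)$ — and $\tilde\psi\ge c'>0$; equivalently $F:=\log f$, still concave and increasing, solves $F(\lambda(\chi_u))=\tfrac1m\log\tilde\psi$. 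One checks that $F$ fits the structural framework, so that the admissible $\mathcal{C}$-subsolution $\underline u$ furnishes the key algebraic fact: there are $\delta,R>0$ such that, in an $\omega$-unitary frame diagonalizing $\chi_u$, whenever $\lambda_1(\chi_u)\ge R$ one has $\sum_i F^{ii}\big((\chi_{\underline u})_{i\bar i}-(\chi_u)_{i\bar i}\big)\ge\delta\big(1+\sum_i F^{ii}\big)$, with $F^{ii}=\partial F/\partial\lambda_i>0$.

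\textbf{Zeroth-order estimate.} The bound $\|u\|_{C^0}\le C$ follows from the Alexandrov--Bakelman--Pucci maximum principle on small coordinate balls, exactly as in Sz\'ekelyhidi's argument, using $\underline u$ to control $\mathrm{osc}_M\,u$ and then fixing the additive constant by a normalization. This step is purely local and metric, so the non-integrability of $J$ causes no difficulty here.

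\textbf{Second-order estimate} (the crux). I would apply the maximum principle to $W=\log\lambda_1(\chi_u)+\varphi(|\nabla u|^2)-A(u-\underline u)$ at an interior maximum point $p$, where $\varphi$ is a suitably chosen increasing concave function, $A\gg 1$, and $\lambda_1$ is perturbed in the usual way to be smooth near $p$. Differentiating twice in an $\omega$-unitary frame and applying the linearized operator $L=\sum F^{ii}\nabla_{i\bar i}$, the concavity of $F$ together with the subsolution inequality above produces a favourable term $\ge\delta(1+\sum F^{ii})$. The terms that are genuinely new compared with the Hermitian case are the torsion terms coming from the non-integrability of $J$: the commutators $[\nabla_i,\nabla_{\bar j}]$ acting on first derivatives of $u$, and the failure of $\ddbar$ to commute with covariant differentiation, which contribute first- and third-order terms weighted by the Nijenhuis tensor. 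These are dominated by $C\sum F^{ii}$ times (respectively) $\lambda_1$, $|\nabla u|$, $|\nabla u|^2$, and the role of $\varphi(|\nabla u|^2)$ is precisely to absorb them; the delicate third-order terms are handled by the Chu--Tosatti--Weinkove device of extracting a positive quantity from the ``good'' second-order terms attached to $\lambda_1$. Choosing $A$ and then $\varphi$ appropriately, one concludes $\lambda_1(\chi_u)\le C(1+\sup_M|\nabla u|^2)$, i.e. $\mathrm{tr}_\omega\chi_u\le C(1+\sup_M|\nabla u|^2)$. I expect the bookkeeping and absorption of these torsion terms, uniformly in the $\mathcal{C}$-subsolution data, to be the main obstacle.

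\textbf{Gradient and higher-order estimates.} The bound $\|u\|_{C^1}\le C$ then follows by a blow-up argument: if it failed, rescaling at points of nearly maximal gradient and invoking $\mathrm{tr}_\omega\chi_u\le C(1+\sup_M|\nabla u|^2)$ produces in the limit a non-constant entire admissible solution on $\mathbb{C}^n$ of the homogeneous quotient equation — the almost complex structure straightens to the standard one under rescaling — contradicting the relevant Liouville-type theorem; alternatively a direct maximum-principle gradient estimate can be carried out. Together with the second-order estimate this gives $C^{-1}\omega\le\chi_u\le C\omega$, the lower bound because $\sigma_n(\lambda(\chi_u))=\tilde\psi\,\sigma_{n-m}(\lambda(\chi_u))\ge c'>0$ while the eigenvalues are bounded above. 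Hence (\ref{dp}) is uniformly elliptic with controlled coefficients, so the complex Evans--Krylov theorem (the non-integrability entering only through lower-order terms) yields $\|u\|_{C^{2,\alpha}}\le C$; differentiating the equation and invoking Schauder estimates together with a standard bootstrap then gives $\|u\|_{C^{k,\alpha}}\le C$ for every $k$, as claimed.
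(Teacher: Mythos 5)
Your outline diverges from the paper at the two places where the real work happens, and in both places the divergence is a genuine gap rather than an alternative proof. First, the second-order estimate: you take the Hou--Ma--Wu/Sz\'ekelyhidi-type test function built on $\lambda_1(\chi_u)$, the largest eigenvalue of the $(1,1)$-form. In the almost Hermitian setting this is precisely what the paper avoids (see its introduction to \S 5): with $J$ non-integrable there are no local potentials, and differentiating the quotient equation twice produces third-order terms that cannot be absorbed starting from $\log\lambda_1(\chi_u)$. The ``Chu--Tosatti--Weinkove device'' you invoke is tied to working with the largest eigenvalue $\lambda_1(\nabla^2u)$ of the \emph{real} Hessian: the good terms $\sum_{\alpha>1}G^{i\bar i}|e_i(u_{V_\alpha V_1})|^2/\big(\lambda_1(\lambda_1-\lambda_\alpha)\big)$ that absorb the torsion/third-order contributions come from the second variation of $\lambda_1(\nabla^2u)$ (Claims 1--2 and Lemma \ref{Lma4.2}), and the remaining bad third-order terms are controlled by the concavity term $-G^{i\bar k,j\bar l}V_1(\tilde g_{i\bar k})V_1(\tilde g_{j\bar l})$, which for the quotient operator requires the inequalities of \cite{GS13,GLZ09} together with the $\mathcal{C}$-subsolution dichotomy (Corollary \ref{Cor2.6}); none of this machinery is available if you only track $\lambda_1(\chi_u)$. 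Moreover, a bound on $\chi_u$ plus a gradient bound does not control $\nabla^2u$ when $J$ is non-integrable, and the $C^{2,\alpha}$ theory invoked afterwards \cite{TWWY15} is applied here with real-Hessian control -- this is exactly why Theorem \ref{Thm4.1} bounds $\|\nabla^2u\|_{C^0}$.

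Second, the gradient bound. You propose to prove $C^2$ first, with constant $C(1+\sup_M|\nabla u|^2)$, and then obtain $C^1$ by blow-up against a Liouville theorem. The paper does the opposite: the gradient estimate is proved directly by the maximum principle in \S 4, and this is where the $\mathcal{C}$-subsolution is used in an essential, equation-specific way (the dichotomy (\ref{2..25})/(\ref{2..26}), the identity $G^{i\bar i}\tilde g_{i\bar i}=m\hat h$, and the bound $G^{i\bar i}\tilde g_{i\bar i}^2\le C$ in (\ref{4.9})); the second-order estimate is then allowed to depend on $\|u\|_{C^1}$. To make your ordering work you would need (a) a $C^2$ estimate whose dependence on the gradient is purely through the factor $K=1+\sup_M|\partial u|^2$, uniformly in the subsolution data -- not established in your sketch and not what Theorem \ref{Thm4.1} asserts -- and (b) a Liouville theorem for the rescaled limit of the quotient equation with cone $\Gamma_n$; the Dinew--Ko{\l}odziej theorem \cite{DK17} used in the Hessian-equation blow-up covers $\Gamma_m$, $m<n$, and even the $\Gamma_n$ case needs the uniform $C^{1,\alpha}$ control of the rescalings coming from (a) before one can conclude the limit is a genuine, nonconstant, bounded plurisubharmonic function. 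As written, your proposal therefore leaves unresolved exactly the two difficulties the paper is about (third-order terms for the quotient operator on a non-integrable manifold, and the gradient estimate via the $\mathcal{C}$-subsolution); the remaining steps ($C^0$ via the modified ABP estimate, Evans--Krylov, bootstrap) do match the paper.
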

We say $v\in C^{2}(M)$ is a supersolution for \eqref{dp} if 
\begin{equation}\label{supersolution}
	\chi_{v}^{n}\leq \psi\chi_{v}^{n-m}\wedge\omega^{m}.
\end{equation}
Analogous to the Hermitian setting in \cite{Sun1}, we can also obtain the following existence theorem under an extra condition.
\begin{theorem}
Let $(M,J,\omega)$ be a compact \acm ~of real dimension $2n$. Suppose there exist an admissible $\mathcal{C}$-subsolution $\underline{u}$ and an admissible supersolution for the equation (\ref{dp}).
Then there exists a pair $(u,b)\in C^{\infty}(M)\times \mathbb{R}$ such that
\begin{equation}\label{dp2}
\left\{\begin{array}{ll}
u\in \mathcal{H}(M), \qquad \sup_{M}u=0,\\[1mm]
\chi_{u}^{n}=e^{b}\psi\chi_{u}^{n-m}\wedge \omega^{m}~\textrm{on}~{M}. \\[1mm]
\end{array}\right.
\end{equation}
\end{theorem}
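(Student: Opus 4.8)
The plan is to run the continuity method, with the a priori estimates of Theorem~\ref{Thm1.1} supplying closedness and the admissible supersolution supplying both a starting point and the control of the constant $b$. Let $\underline u$ be the given $\mathcal{C}$-subsolution and $v$ an admissible supersolution, which we may take to be smooth (mollifying if necessary) and normalized by $\sup_M v=0$. Put $\psi_0:=\chi_v^{n}/(\chi_v^{n-m}\wedge\omega^m)$, a smooth function bounded below by a positive constant for which \eqref{supersolution} reads precisely $\psi_0\le\psi$, and interpolate by $\log\psi_t:=(1-t)\log\psi_0+t\log\psi$, so that $0<\psi_0\le\psi_t\le\psi$ for all $t\in[0,1]$. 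I would then study, for $(u_t,b_t)\in C^\infty(M)\times\mathbb{R}$, the family
\[
\chi_{u_t}^{n}=e^{b_t}\psi_t\,\chi_{u_t}^{n-m}\wedge\omega^m,\qquad u_t\in\mathcal H(M),\qquad \sup_M u_t=0 .
\]
Since $(v,0)$ solves this at $t=0$, the set $S$ of solvable parameters is nonempty; it remains to show that $S$ is open and closed in $[0,1]$, for then $1\in S$ and, as $\psi_1=\psi$, the pair $(u,b):=(u_1,b_1)$ is the desired solution of \eqref{dp2}.

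For closedness the crucial point is a uniform bound on $b_t$. Writing $\sigma(\chi_w):=\chi_w^{n}/(\chi_w^{n-m}\wedge\omega^m)=\binom{n}{m}\sigma_n(\lambda)/\sigma_{n-m}(\lambda)$ in terms of the eigenvalues $\lambda$ of $\chi_w$ with respect to $\omega$, the identity $\sigma_{n-m}(\lambda)=\sigma_n(\lambda)\sigma_m(1/\lambda)$ shows $\sigma$ to be strictly increasing in $\chi_w$ on the positive cone. At a maximum point $x_0$ of $u_t-v$ we have $\ddbar(u_t-v)(x_0)\le 0$, hence $0<\chi_{u_t}(x_0)\le\chi_v(x_0)$, so $e^{b_t}\psi_t(x_0)=\sigma(\chi_{u_t})(x_0)\le\sigma(\chi_v)(x_0)=\psi_0(x_0)\le\psi_t(x_0)$ and therefore $b_t\le 0$; at a minimum point of $u_t-v$ the reverse inequality gives $e^{b_t}\ge\min_M(\psi_0/\psi)>0$. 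Thus $b_t$ stays in a fixed bounded interval, so $e^{b_t}\psi_t$ ranges over a bounded family of smooth functions with a uniform positive lower bound, and moreover $e^{b_t}\psi_t\le\psi$; since the $\mathcal{C}$-subsolution condition of Definition~\ref{subsolution} is preserved under replacing the right-hand side by a smaller positive function, $\underline u$ remains an admissible $\mathcal{C}$-subsolution for every member of the family, with uniform constants. Theorem~\ref{Thm1.1} applied with $\psi$ replaced by $e^{b_t}\psi_t$ then yields $\|u_t\|_{C^{k,\alpha}}\le C$ uniformly in $t\in S$; since the equation together with the lower bound on $e^{b_t}\psi_t$ also forces a uniform estimate $\chi_{u_t}\ge\delta\omega>0$, closedness follows by compactness and elliptic bootstrapping.

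For openness I would fix $t_0\in S$ with solution $(u_0,b_0)$, normalized so that $u_0(p_0)=0$ for a fixed basepoint $p_0$, and apply the implicit function theorem to $\Phi(u,b,t)=\log\bigl(\chi_u^{n}/(\psi_t\,\chi_u^{n-m}\wedge\omega^m)\bigr)-b$ on $\{u\in C^{k,\alpha}(M):u(p_0)=0\}\times\mathbb{R}$. Its linearization in $u$ at a solution is a second-order elliptic operator $L$ with no zeroth-order term, because the nonlinearity depends on $u$ only through $\ddbar u$; on the connected compact manifold $M$ the strong maximum principle then gives $\ker L=\mathbb{R}$, so $L$ is Fredholm of index zero with one-dimensional cokernel, spanned by a strictly positive $\phi_0\in\ker L^\ast$ (the invariant density of the diffusion generated by $L$). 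Since $\int_M\phi_0\,dV>0$, the constant $\mathbf{1}$ is not in the image of $L$, which is exactly what makes $(v,b)\mapsto Lv-b\,\mathbf{1}$ an isomorphism from $\{v:v(p_0)=0\}\times\mathbb{R}$ onto $C^{k-2,\alpha}(M)$; hence the family is solvable for $t$ near $t_0$, and adding a constant to restore $\sup_M u_t=0$ shows $S$ is open. I expect the main obstacle to be the uniform two-sided bound on $b_t$: this is the only step that genuinely uses the supersolution, and it relies on the monotonicity of $\sigma$ on the positive cone and on the admissibility of $v$; the remaining ingredients are the standard continuity-method package built on Theorem~\ref{Thm1.1}.
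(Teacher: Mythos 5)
Your proposal is correct, and its skeleton coincides with the paper's: the same continuity path $\chi_{u_t}^{n}=e^{b_t}\psi^{t}\psi_0^{1-t}\,\chi_{u_t}^{n-m}\wedge\omega^{m}$ starting from the supersolution, the same two-sided bound on $b_t$ obtained at the maximum and minimum points of the difference with $v$ via monotonicity of $\sigma_n/\sigma_{n-m}$ on the positive cone, the same observation that $e^{b_t}\psi_t\le\psi$ keeps $\underline{u}$ a $\mathcal{C}$-subsolution with uniform constants, and closedness via the uniform estimates of Theorem \ref{Thm1.1}. Where you genuinely diverge is the openness step. The paper follows Sun's Hermitian argument \cite{Sun1}: it fixes the constant through a normalization integral, builds the metric $\Omega$ from the linearized coefficients, conformally rescales it to a Gauduchon metric $e^{\phi}\Omega$ (quoting Gauduchon's theorem in the almost Hermitian form of \cite{CTW}), and then the linearization of the normalized map at $0$ is exactly $\Delta_{\Omega}$, which is invertible, so the inverse function theorem applies to $u$ alone. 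You instead treat $(u,b)$ jointly and invoke Fredholm theory: $L$ has no zeroth-order term, so $\ker L=\mathbb{R}$ by the strong maximum principle, the index is zero, and the cokernel is spanned by a positive $\phi_0\in\ker L^{*}$, whence $(w,\beta)\mapsto Lw-\beta$ is an isomorphism on the normalized space; this is essentially the route taken in \cite{Gab}. Your route avoids Gauduchon's theorem and handles the unknown constant transparently, at the price of needing the (standard, but not free) fact that the adjoint kernel of a second-order elliptic operator without zeroth-order term is spanned by a positive density -- you should cite the principal-eigenvalue/Krein--Rutman argument or \cite{Gab} rather than leave it at ``the invariant density of the diffusion''; the paper's route trades that functional-analytic input for Gauduchon's existence theorem and the elementary invertibility of $\Delta_{\Omega}$. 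One minor point: both you and the paper need $\psi_0=\chi_v^{n}/(\chi_v^{n-m}\wedge\omega^{m})$ smooth, so it is cleaner to assume the supersolution $v$ smooth outright; your parenthetical mollification would require care to preserve admissibility and \eqref{supersolution}.
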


Such equations have been well-studied in the past several decades, and it plays an important role in the study of fully nonlinear second order elliptic PDEs.
In the K\"ahler case, when $M$ is compact, the complex \MA~ equation
\begin{equation}\label{Yau}
\chi_{u}^{n}=\psi\omega^{n}
\end{equation}
was studied by Yau in his famous paper \cite{Yau76}, which is a fundamental tool in the study of certain theories of K\"ahler manifolds.  Cao \cite{Cao85} provided a new proof by using the parabolic approach (also called as K\"ahler Ricci flow). To be more precise, he investigated the parabolic \MA~ equation
\begin{equation}\label{X1}
\frac{\partial u_{t}}{\partial t}=\log\frac{\chi_{u_{t}}^{n}}{\omega^{n}}-\log\psi,~ u_{0}=0,
\end{equation}
where we further require $\chi_{u_{t}}>0$. He proved that the solutions of (\ref{X1}) exist for all time and convergence to the solution of (\ref{Yau}).
The  complex Hessian equations
\begin{equation}\label{Yau1}
\chi_{u}^{m}\wedge \omega^{n-m}=\psi\omega^{n}, ~1\leq m\leq n-1,
\end{equation}
were completed by Dinew-Kolodziej \cite{DK17}, where it used the second order estimate of Hou-Ma-Wu \cite{HMW}  to obtain the gradient estimate.

The equation (\ref{dp}) could also be regarded as a natural extension of Donaldson's equation \cite{Don} where $M$ is K\"ahler. Specifically, when $\psi$ is a constant and $m=1$, that is,
\[\chi_{u}^{n}=c\chi_{u}^{n-1}\wedge \omega,\]
where $\chi$ is a smooth closed real $(1,1)$ form on $M$ and $c$ is a constant depending on the classes $[\chi]$ and $[\omega]$.  Chen \cite{Ch00,Ch04} also found it in the study of Mabuchi energy. Further related works see also \cite{FLM11,GLZ09,SW08,Sun3,We04,We06} and references therein. 
For overview of recent development for complex \MA~equations, we refer the reader to the comprehensive survey paper \cite{PSS12}.

If $M$ is a compact Hermitian manifold, Tosatti-Weinkove \cite{TW09} had considered the equation (\ref{Yau}) and gave a complete proof, together with the work of Zhang \cite{Zh10}. The parabolic proof was given by Gill \cite{Gill}. Different from K\"ahler case, when $\chi$ is a smooth real $(1,1)$ form,  Sun \cite{Sun2} also studied equations 
\begin{equation}\label{1...4}
	\chi_{u}^{k}\wedge \omega^{n-k}=\psi\chi_{u}^{l}\wedge \omega^{n-l},~1\leq l<k\leq n
\end{equation}
 with extra $\mathcal{C}$-condition, namely,
\[
k\chi'^{k-1}\wedge \omega^{n-k}>l\psi\chi'^{l-1}\wedge \omega^{n-l}
\]
for some  $k$-positive (with respect to $\omega$) $\chi'\in [\chi]$. More general fully nonlinear second order elliptic equations were investigated by Sz\'{e}kelyhidi \cite{Gab}. The Dirichlet problems
were considered by Guan-Li \cite{GL13} and Guan-Sun \cite{GS13} under the existence of subsolution, which extended the Guan's works \cite{Gu98,Gu14}. One can also refer to \cite{Sun1,Sun2,Sun3,Yu20a,Yu20b} and references therein for recent developments of this topic. The Hessian equations on Hermitian manifolds were considered in \cite{SW19,Zh17} for compact case and by Collins-Picard \cite{CP19} for Dirichlet problem.

The complex \MA~equation  on \acm s was studied by Chu-Tosatti-Weinkove \cite{CTW}. Chu \cite{Chu19} also provided a parabolic proof, analogous to the K\"ahler Ricci flow. Chu-Huang-Zhu \cite{CHZ17} has considered the $\sigma_{2}$ equation and gave the second order estimate. On the strictly pseudoconvex domains of almost complex manifolds, the Dirichlet problem was studied by Harvey-Lawson \cite{HL} for continuous weak solution, and by Plis \cite{P14} for smooth solution by using a similar idea of Caffarelli-Kohn-Nirenberg-Spruck  \cite{CKNS85} on  $\mathbb{C}^{n}$.

Analogous to (\ref{X}), it is also of interest to consider the parabolic version of the equation (\ref{dp}). More precisely, we can consider the following heat equations
\begin{equation}\label{X}
\frac{\partial u_{t}}{\partial t}=\log\frac{\chi_{u_{t}}^{n}}{\chi_{u_{t}}^{n-m}\wedge\omega^{m}}-\log\psi, \qquad u(0,x)\in \mathcal{H}(M),
\end{equation}
where $\chi_{u_{t}}>0$. This result has studied  in the Hermitian case by Sun \cite{Sun4} and  in K\"ahler case by \cite{Ch00,Ch04,FL12,FL13,FLSW14,FLM11,We04,We06} and references therein. We shall prove the existence and convergence of \eqref{X} in elsewhere.

~\\
\textbf{Structure of the paper:} In the \S 2 we will give a brief introduction to the \acm s and reviews some lemmas from \cite{Gab}.

In the \S 3 we will prove the oscillation estimate. As in \cite{CTW}, we need to use the modified
Alexandroff-Bakelman-Pucci maximum principle in \cite{Blo13,Gab}. In the case of \MA~ equation, once we have proved $\chi_{u}\geq \frac{1}{2}\omega$, then the upper bound for $\lambda_{*}(\tilde{g}_{i\bar{j}})$ is easily obtained from a Laplacian inequality. But the Laplacian inequality might not hold for our \MA~ type equations, in order to bound $\lambda_{*}(\tilde{g}_{i\bar{j}})$ from above, we need to further use the condition of $\mathcal{C}$-subsolution.

In the \S 4 we prove the $C^{1}$ estimate by using the maximum principle. We heavily rely on the properties of $\mathcal{C}$-subsolution to derive the Corollary \ref{Cor2.6}. 

In the \S 5 we will give the  $C^{2}$ estimate, which is the most troublesome part in the whole paper. Different from the method of Hou-Ma-Wu \cite{HMW} and G. Sz\'{e}kelyhidi \cite{Gab},  we use the argument of \cite{CTW} together with the $\mathcal{C}$-subsolution to estimate the largest  eigenvalue of the \textit{real} Hessian of $u$. On the lower bound for $\mathcal{L}(Q)$ (see Lemma \ref{Lma4.2}), it is a critical point for us to carefully use the term involving $-G^{i\bar{k},j\bar{l}}$  to control the \textit{bad} negative third order terms.

Once we have proved them, higher order estimates can be also obtained by applying the well-known Evans-Krylov theory (see \cite{Kry,TWWY15} for instance). We omit the proof of the standard steps here. In the last section we will give the proof of existence theorem.

~\\
{\bf Acknowledgments.}
The author wish to thank his advisor professor Xi Zhang for his constant encouragements.  The author is partially supported by NSF in China No.11625106,
11571332 and 11721101. The research was partially supported by the project ``Analysis and Geometry on Bundle" of Ministry of Science and Technology of the People's Republic of China, No. SQ2020YFA070080.
\section{Preliminaries}
On an  almost Hermitian manifold  $(M,g,J)$ with  real dimension $2n$,  for each $(p,q)$-form,    we can define $\partial$ and $\overline{\partial}$ operators  (see \cite{CTW,HL}). Denote $A^{1,1}(M)$ by the space of smooth real (1,1)-forms on $M$. Let $e_{1},\cdots, e_{n}$ be a local $g$-orthonormal frame of $T_{1,0}M$. For $v\in C^{2}(M,\mathbb{R})$, define
\[
g_{i\bar{j}}(v)= g_{i\bar{j}}+e_{i}\bar{e}_{j}v-[e_{i},\bar{e}_{j}]^{0,1}v.
\]
Then in this local frame we have
\[
\chi_{v}=\sqrt{-1}\sum_{i,j} g_{i\bar{j}}(v)\theta_{i}\wedge\bar{\theta}_{j},
\]
where $\theta_{1},\cdots,\theta_{n}$ is the corresponding local $g$-orthonormal coframe of $T^{*}M.$
~\\

In what follows, we will let $\tilde{g}_{i\bar{j}}$,~$\bar{g}_{i\bar{j}}$  be $g_{i\bar{j}}(u)$, $g_{i\bar{j}}(\underline{u})$ respectively for simplicity.
Using this notation, our equation in (\ref{dp}) can be expressed as
\begin{equation}\label{dp'}
F(\tilde{g}_{i\bar{j}})=\left[\frac{\sigma_{n}(\lambda_{*}(\tilde{g}_{i\bar{j}}))}{\sigma_{n-m}(\lambda_{*}(\tilde{g}_{i\bar{j}}))}\right]^{\frac{1}{m}}=h,
\end{equation}
where $h$ is defined by $\psi={n\choose m}h^{m}$, or the inverse Hessian equation 
\begin{equation}\label{dp''}
{G}(\tilde{g}_{i\bar{j}})=-\sigma_{m}(\lambda^{*}(\tilde{g}^{i\bar{j}}))=-{n\choose m}\psi^{-1}=:-\hat{h}.
\end{equation}
Here $\lambda_{*}(A)$ (resp. $\lambda^{*}(A)$) denotes the eigenvalues of Hermitian matrix $A$ with respect to $g$ (resp. $g^{-1}$) and
$\sigma_{k}$ ($1\leq k\leq n$) denotes the $k$-th elementary symmetric polynomial defined by
\[\sigma_{k}(\lambda)=\sum_{1\leq i_{1}<\cdots <i_{k}\leq n}\lambda_{i_{1}}\cdots \lambda_{i_{k}},~\lambda\in \mathbb{R}^{n}.\]
For $\{i_{1},\cdots, i_{s}\}\subset \{1,\cdots n\}$ and each $1\leq k\leq n-1$, let
\[\sigma_{k;i_{1}\cdots i_{s}}(\lambda)=\sigma_{k}(\lambda|_{\lambda_{i_{1}}=\cdots=\lambda_{i_{s}}=0}).\]
We also let
$F^{i\bar{j}}=\frac{\partial F}{\partial{\tilde{g}_{i\bar{j}}}}$, ${G}^{i\bar{j}}=\frac{\partial {G}}{\partial{\tilde{g}_{i\bar{j}}}}$ and
${G}^{i\bar{k},j\bar{l}}=\frac{\partial^{2} {G}}{\partial{\tilde{g}_{i\bar{k}}}\partial{\tilde{g}_{j\bar{l}}}}$ in what follows. It is not hard to see
\[
{G}^{i\bar{j}}=\frac{-mG}{F}F^{i\bar{j}}=\frac{m\hat{h}}{h}F^{i\bar{j}}.
\]
This pro rata means that ${G}$ and $F$ share same certain critical properties in some lemmas below.

Fixed a point $x\in M$, we can choose a local frame around $x$ such that $g_{i\bar{j}}=\delta_{ij}$ and the matrix $\big\{\tilde{g}_{i\bar{j}}\big\}$ is diagonal, then $\big\{{G}^{i\bar{j}}\big\}$ is also diagonal at $x$ and
${G}^{i\bar{i}}=\sigma_{m-1;i}(\tilde{g}^{i\bar{i}})^{2}$ for each $i$.  We denote the (second order) linearization of ${G}$ by
\begin{equation}\label{}
\mathcal{L}=\sum_{i,j}{G}^{i\bar{j}}(e_{i}\bar{e}_{j}-[e_{i},\bar{e}_{j}]^{0,1}).
\end{equation}
\subsection{$\mathcal{C}$-subsolution} 
Denote
\begin{equation*}
	\Gamma_{n}=\{\lambda=(\lambda_{1}, \cdots, \lambda_{n})\in \mathbb{R}^{n},\  \lambda_{i}>0, 1\leq i \leq n\}.
\end{equation*} The following lemma is well-known and one can refer \cite{Mit70} for the proof.
\begin{lemma}\label{Lma2.1} Let $f(\lambda)=\left[\frac{\sigma_{n}(\lambda)}{\sigma_{n-m}(\lambda)}\right]^{\frac{1}{m}}$ defined on $\Gamma_{n}$. Then we have 
\begin{enumerate}
  \item $f_{i}=\frac{\partial f}{\partial \lambda_{i}}>0$ with $f$ is 1-homogeneous and concave;
  \item $f\Big|_{\partial\Gamma_{n} }=0$;
  \item For each $\sigma<\infty$ and $\lambda\in \Gamma_{n}$, we have $\underset{t\ri\infty}{\lim}f(t\lambda)>\sigma$.
\end{enumerate}
\end{lemma}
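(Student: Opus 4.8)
\textbf{Proof proposal for Lemma \ref{Lma2.1}.}

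The plan is to express everything through the ratio $\sigma_n/\sigma_{n-m}$ and exploit the standard Maclaurin/Newton machinery for elementary symmetric polynomials. Writing $f(\lambda)=\big(\sigma_n(\lambda)/\sigma_{n-m}(\lambda)\big)^{1/m}$, I would first record the explicit form $\sigma_n(\lambda)=\lambda_1\cdots\lambda_n$ and observe that on $\Gamma_n$ we have $\sigma_n/\sigma_{n-m}=\big(\sum_{|I|=m}\prod_{i\notin I}\lambda_i^{-1}\big)^{-1}=\big(\sigma_m(\lambda^{-1})\big)^{-1}$, where $\lambda^{-1}=(1/\lambda_1,\dots,1/\lambda_n)\in\Gamma_n$. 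Thus $f(\lambda)=\sigma_m(\lambda^{-1})^{-1/m}$, which reduces all three claims to well-known properties of $\sigma_m^{1/m}$ on $\Gamma_n$ after an inversion of variables.

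For (1): positivity $f_i>0$ follows since $\partial_{\lambda_i}(\sigma_n/\sigma_{n-m})>0$ on $\Gamma_n$ (both $\sigma_n$ and $\sigma_{n-m}$ have positive partials, and a direct computation gives $\partial_{\lambda_i}(\sigma_n/\sigma_{n-m})=\sigma_{n-m}^{-2}(\sigma_{n-1;i}\,\sigma_{n-m}-\sigma_n\,\sigma_{n-m-1;i})$, which is positive because $\sigma_{n-1;i}=\sigma_n/\lambda_i$ and $\sigma_{n-m-1;i}<\sigma_{n-m}/\lambda_i$ on $\Gamma_n$). Homogeneity is immediate: $\sigma_n$ is $n$-homogeneous and $\sigma_{n-m}$ is $(n-m)$-homogeneous, so the ratio is $m$-homogeneous and its $1/m$ power is $1$-homogeneous. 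Concavity is the substantive point: I would invoke the classical fact that $\big(\sigma_k(\mu)/\sigma_l(\mu)\big)^{1/(k-l)}$ is concave on $\Gamma_n$ for $k>l$ (this is exactly the statement cited to \cite{Mit70}; it can be derived from the Newton inequalities $\sigma_{k-1}\sigma_{k+1}\le\frac{(k-1)(n-k+1)}{k(n-k)}\sigma_k^2$ together with the fact that the normalized symmetric means form a log-concave sequence, or alternatively via Gårding's theory of hyperbolic polynomials since each $\sigma_k$ is $\Gamma_n$-hyperbolic). Taking $k=n$, $l=n-m$ yields concavity of $f$ on $\Gamma_n$.

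For (2): if $\lambda\in\partial\Gamma_n$ then some $\lambda_{i_0}=0$ while all $\lambda_i\ge0$, so $\sigma_n(\lambda)=\prod_i\lambda_i=0$, whereas $\sigma_{n-m}(\lambda)\ge0$ stays finite; one checks $\sigma_{n-m}$ does not simultaneously vanish in a way that creates an indeterminacy by noting that along any sequence $\lambda^{(j)}\in\Gamma_n\to\lambda\in\partial\Gamma_n$ the numerator tends to $0$ and the denominator stays bounded (it is a polynomial), hence $f(\lambda^{(j)})\to0$; so $f$ extends continuously by $0$ to $\partial\Gamma_n$. For (3): using $f(t\lambda)=t\,f(\lambda)$ from $1$-homogeneity, and $f(\lambda)>0$ for $\lambda\in\Gamma_n$, we get $\lim_{t\to\infty}f(t\lambda)=+\infty>\sigma$ for every finite $\sigma$; alternatively without homogeneity, $f(t\lambda)=\big(\sigma_n(t\lambda)/\sigma_{n-m}(t\lambda)\big)^{1/m}=t\,\big(\sigma_n(\lambda)/\sigma_{n-m}(\lambda)\big)^{1/m}\to\infty$.

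The only genuinely nontrivial ingredient is the concavity in (1); everything else is bookkeeping with elementary symmetric polynomials. Since the excerpt already attributes the lemma to \cite{Mit70}, I expect the intended proof is simply to quote that reference for concavity and verify (2) and (3) directly as above; I would present it that way, keeping the self-contained verification of positivity, homogeneity, and the boundary/blow-up behavior, and citing \cite{Mit70} for the concavity assertion.
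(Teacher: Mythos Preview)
Your proposal is correct and aligns with the paper, which does not actually prove the lemma but simply declares it well-known and refers the reader to \cite{Mit70}, exactly as you anticipated. One small remark: in (2) the phrase ``the denominator stays bounded'' does not by itself exclude the $0/0$ situation when more than $m$ coordinates of $\lambda$ vanish, but your own inversion identity $f(\lambda)=\sigma_m(\lambda^{-1})^{-1/m}$ already resolves this cleanly, since $\sigma_m(\lambda^{-1})\to+\infty$ whenever any coordinate of $\lambda\in\Gamma_n$ tends to zero.
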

For any $\sigma>0$, define 
$\Gamma_{n}^{\sigma}=\big\{\lambda\in \Gamma_{n}:~ f(\lambda)>\sigma\big\}.$
We can choose a proper value $\sigma$ such that $\Gamma_{n}^{\sigma}$ is not empty, therefore, $\partial\Gamma_{n}^{\sigma}=f^{-1}(\sigma)$ is a smooth hypersurface in $\Gamma_{n}$.
\begin{defn}\label{subsolution} {\normalfont We say $\underline{u}$ is a $\mathcal{C}$-subsolution for (\ref{dp}) if at each $x\in M$,
\begin{equation}\label{2..11}
(\lambda_{*}(\bar{g}_{i\bar{j}})+\Gamma_{n})\cap \partial\Gamma_{n}^{h(x)}
\end{equation}
is a bounded set.}
\end{defn}
So, there are uniform constants $\delta, R>0$ such that at each $x\in M$ we have
\begin{equation}\label{2..12}
(\lambda_{*}(\bar{g}_{i\bar{j}})-\delta I_{n}+\Gamma_{n})\cap \partial\Gamma_{n}^{h(x)}\subset B_{R}(0),
\end{equation}
where $B_{R}(0)$ is a $R$-radius ball in $\mathbb{R}^{n}$ with center 0.
\begin{remark}
{\normalfont For our equation (\ref{dp}), the set $(\lambda_{*}(\bar{g}_{i\bar{j}})+\Gamma_{n})\cap \partial\Gamma_{n}^{h(x)}$ is bounded means that
\[\lim_{t\ri +\infty}f(\lambda_{*}(\bar{g}_{i\bar{j}})+t\textbf{e}_{i})>h\]
for each $i=1,2,\cdots, n$, where $\textbf{e}_{i}$ is the i-th standard basis vector. }

\end{remark}
Let $\underline{u}\in C^{2}(M,\mathbb{R})$ be an admissible $\mathcal{C}$-subsolution, then there is a uniform constant $0<\tau<1$ such that
\begin{equation}\label{2.12}
\tau^{-1}\omega\geq \chi_{\underline{u}}\geq \tau\omega.
\end{equation}

The following lemma is critical to us (see e.g. \cite{Gab,Gu14}).
\begin{lemma}\label{Lemma2.3}\cite[Proposition 5]{Gab}
Suppose $\mu\subset \mathbb{R}^{n}$ satisfies
\[
(\mu-\delta I_{n}+\Gamma_{n})\cap \partial\Gamma_{n}^{\sigma}\subset B_{R}(0)
\]
for some $\delta, R>0$. Then there exists a constant $\theta>0$ (depending only on $\delta$) such that for $\lambda\in \partial\Gamma_{n}^{\sigma}$ with $|\lambda|>R$, either
\[
\sum_{i}f_{i}(\lambda)(\mu_{i}-\lambda_{i})\geq \theta\sum_{i}f_{i}(\lambda)
\]
or
\[
f_{k}(\lambda)\geq \theta\underset{i}{\sum}f_{i}(\lambda),~ \textrm{for}~ k=1,2,\cdots,n
\]
in $(\mu-\delta I_{n}+\Gamma_{n})\cap \partial\Gamma_{n}^{\sigma}$.
\end{lemma}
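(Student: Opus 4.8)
The plan is to argue by contradiction, using the concavity of $f$ together with the boundedness hypothesis, with Euler's identity for homogeneous functions as the device that makes the two alternatives comparable. Suppose the conclusion is false. Then there are $\theta_j\downarrow 0$, points $\lambda^{(j)}\in\partial\Gamma_n^\sigma$ with $|\lambda^{(j)}|>R$, and indices $k_j\in\{1,\dots,n\}$ such that, writing $S_j:=\sum_i f_i(\lambda^{(j)})>0$,
\[
\sum_i f_i(\lambda^{(j)})(\mu_i-\lambda^{(j)}_i)<\theta_j S_j,\qquad f_{k_j}(\lambda^{(j)})<\theta_j S_j .
\]
Since $f$ is $1$-homogeneous (Lemma \ref{Lma2.1}(1)), Euler's identity gives $\sum_i f_i(\lambda)\lambda_i=f(\lambda)=\sigma$ on $\partial\Gamma_n^\sigma$, so the first inequality rewrites as $\sum_i f_i(\lambda^{(j)})\mu_i<\sigma+\theta_j S_j$; that is, the failure of the first alternative says that the linear functional $x\mapsto\sum_i f_i(\lambda^{(j)})x_i$ is (almost) no larger at $\mu$ than its value $\sigma$ at $\lambda^{(j)}$.

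Next I would exploit concavity: for every $x\in\Gamma_n$,
\[
f(x)\le f(\lambda^{(j)})+\sum_i f_i(\lambda^{(j)})(x_i-\lambda^{(j)}_i)=\sum_i f_i(\lambda^{(j)})x_i ,
\]
where Euler's identity was used again to cancel $f(\lambda^{(j)})=\sigma$ against $\sum_i f_i(\lambda^{(j)})\lambda^{(j)}_i$. I would test this at $z_j(s):=\mu-\delta I_n+v+s\,\textbf{e}_{k_j}$ with $s\ge 0$, where $v$ is a fixed vector (independent of $j$) chosen with $0<v_i<\delta$ and $v_i>\delta-\mu_i$ for all $i$ --- possible because the hypothesis forces $\mu_i>0$ for each $i$ (otherwise the shifted orthant $\mu-\delta I_n+\Gamma_n$ would already meet $\partial\Gamma_n^\sigma$ in a branch running off to infinity in a coordinate direction), and this choice guarantees $\mu-\delta I_n+v\in\Gamma_n$, hence $z_j(s)\in\Gamma_n$ for all $s\ge 0$. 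Plugging $z_j(s)$ in, using $\sum_i f_i(\lambda^{(j)})v_i\le\|v\|_\infty S_j$ together with the two smallness inequalities above, one gets
\[
f(z_j(s))<\sigma+S_j\big(\theta_j(1+s)-\eta\big),\qquad \eta:=\delta-\|v\|_\infty>0 .
\]
Taking the fixed value $s=R$ and any $j$ large enough that $\theta_j(1+R)<\eta$, we obtain $f(z_j(R))<\sigma$.

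Finally, I would lift $z_j(R)$ onto the level set and invoke the hypothesis. Since $z_j(R)\in\Gamma_n$ with $f(z_j(R))<\sigma$, while $f\big(z_j(R)+t(1,\dots,1)\big)\to\infty$ as $t\to\infty$ by monotonicity of $f$ (Lemma \ref{Lma2.1}), there is $t_j>0$ with $f\big(z_j(R)+t_j(1,\dots,1)\big)=\sigma$; this point lies in $(\mu-\delta I_n+\Gamma_n)\cap\partial\Gamma_n^\sigma\subset B_R(0)$, yet its $k_j$-th coordinate equals $\big(\mu_{k_j}-\delta+v_{k_j}\big)+R+t_j>R$, a contradiction. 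This produces the desired $\theta>0$; tracking the constants shows it may be taken of the form $c(R)\,\eta$, hence controlled in terms of $\delta$ (through $\eta$) and the fixed data. The step I expect to be the crux is choosing the test family correctly: the direction $\textbf{e}_{k_j}$ is forced by the failure of the second alternative, but it must be combined with the small fixed shift $v$ (to stay inside $\Gamma_n$ and to produce the positive gap $\eta$) and then pushed along $(1,\dots,1)$ onto $\partial\Gamma_n^\sigma$, all while keeping the coefficient $\theta_j(1+s)-\eta$ negative so that the lifted point genuinely escapes $B_R(0)$.
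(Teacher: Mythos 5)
The paper does not actually prove this lemma: it is stated as a quotation of Sz\'ekelyhidi's result (\cite[Proposition 5]{Gab}), so there is no internal argument to compare against, and your proposal should be judged as a self-contained proof. Its core is correct and is essentially the standard argument behind the citation: negating the dichotomy, using Euler's identity $\sum_i f_i(\lambda)\lambda_i=f(\lambda)=\sigma$ together with concavity to get $f(x)\le\sum_i f_i(\lambda)x_i$ for all $x\in\Gamma_n$, testing at $z=\mu-\delta I_n+v+R\,\textbf{e}_{k}$ to conclude $f(z)<\sigma$ once $\theta(1+R)<\eta$, and then lifting $z$ along $(1,\dots,1)$ onto $\partial\Gamma_n^{\sigma}$: the lifted point lies in $(\mu-\delta I_n+\Gamma_n)\cap\partial\Gamma_n^{\sigma}$ and its $k$-th coordinate exceeds $R$, contradicting the hypothesis. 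All of these steps check out (the lift exists because $f(x+t(1,\dots,1))\ge f(t,\dots,t)=t\,f(1,\dots,1)\to\infty$ by monotonicity and homogeneity), and the sequence $\theta_j$ is not even needed: your computation shows directly that the dichotomy holds for every $\theta<\eta/(1+R)$.

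The one genuine loose end is the concluding claim about what $\theta$ depends on. You choose $v$ knowing only that $\mu_i>0$, so $\eta=\delta-\|v\|_{\infty}$, and hence your $\theta$, a priori degenerates when some $\mu_i$ is small; as written, ``controlled in terms of $\delta$'' is not justified. The fix is the quantitative form of your own parenthetical remark: if $\mu_i<\delta$, then points of $\partial\Gamma_n^{\sigma}$ of the form $(\varepsilon(t),t,\dots,t)$ (with $\varepsilon(t)\in(0,t)$ in the $i$-th slot chosen by the intermediate value theorem so that $f=\sigma$) lie in $\mu-\delta I_n+\Gamma_n$, since their $i$-th coordinate is positive $>\mu_i-\delta$ and the others tend to infinity, and they are unbounded --- contradicting the hypothesis. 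Hence $\mu_i\ge\delta$ for every $i$, so one may take $v\equiv\delta/2$, $\eta=\delta/2$ and $\theta=\delta/\big(4(1+R)\big)$. The residual dependence on $R$ is unavoidable in this argument and is consistent with the source: the ``(depending only on $\delta$)'' in the statement is an imprecision of the paper itself, whose matrix analogue (\cite[Proposition 6]{Gab}, quoted immediately afterwards) records the dependence as $\delta,R$.
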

There is also a similar result holds for the Hermitian matrices in \cite{Gab,FLM11,Gu14}.
\begin{lemma}\cite[Proposition 6]{Gab}
Let $[a,b]\subset (0,\infty)$ and for some $\delta, R>0$. There exists a constant $\theta>0$ (depending only on $\delta,R$) such that the following holds. Suppose there exist a Hermitian matrix $B$ and a constant $\sigma\in [a,b]$ satisfying
\[
(\lambda(B)-\delta I_{n}+\Gamma_{n})\cap \partial\Gamma_{n}^{\sigma}\subset B_{R}(0).
\]
Then for $\lambda(A)\in (\lambda(B)-\delta I_{n}+\Gamma_{n})\cap \partial\Gamma_{n}^{\sigma}$, we have either
\[
\sum_{i,j}F^{ij}(A)(B_{ij}-H_{ij})\geq \theta\sum_{i}F^{ii}(A)
\]
or
\[
F^{kk}(A)\geq \theta\underset{i}{\sum}F^{ii}(A),~  \textrm{for}~ k=1,2,\cdots,n.
\]
\end{lemma}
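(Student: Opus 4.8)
The plan is to reduce this Hermitian-matrix statement to its eigenvalue counterpart, Lemma~\ref{Lemma2.3}, applied with $\mu=\lambda(B)$, by diagonalizing $A$ and using a majorization argument. Fix the point in question and choose a unitary basis in which $A$ is diagonal with eigenvalues ordered $\lambda_1(A)\geq\cdots\geq\lambda_n(A)$. Since $F$ is the symmetric function $f$ of the eigenvalues and $f$ is $C^{1}$, in such a basis the matrix $\big(F^{ij}(A)\big)$ is diagonal with $F^{ii}(A)=f_i(\lambda(A))$; concavity and symmetry of $f$ force $f_1(\lambda(A))\leq\cdots\leq f_n(\lambda(A))$, and $\sum_i F^{ii}(A)=\sum_i f_i(\lambda(A))$ is the trace of $DF(A)$, which is basis-independent. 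In this basis (reading the $H_{ij}$ of the statement as $A_{ij}$),
\[
\sum_{i,j}F^{ij}(A)\big(B_{ij}-A_{ij}\big)=\sum_i f_i(\lambda(A))\big(B_{ii}-\lambda_i(A)\big).
\]

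The key analytic step is the inequality
\[
\sum_i f_i(\lambda(A))\,B_{ii}\;\geq\;\sum_i f_i(\lambda(A))\,\lambda_i(B),
\]
with the eigenvalues of $B$ also listed in decreasing order. This I would obtain from the Schur--Horn theorem, which asserts that the diagonal vector $(B_{11},\dots,B_{nn})$ is majorized by the spectrum $\lambda(B)$, combined with an Abel-summation/rearrangement estimate: for a fixed nondecreasing weight vector $w$, the functional $c\mapsto\sum_i w_i c_i$, restricted to vectors $c$ whose entries are majorized by those of $\lambda(B)$, is minimized at $c=\lambda(B)$ sorted decreasingly. With $w_i=f_i(\lambda(A))$ this is exactly the displayed inequality, whence
\[
\sum_{i,j}F^{ij}(A)\big(B_{ij}-A_{ij}\big)\;\geq\;\sum_i f_i(\lambda(A))\big(\lambda_i(B)-\lambda_i(A)\big).
\]

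Now I would invoke Lemma~\ref{Lemma2.3} with $\mu:=\lambda(B)$ and $\lambda:=\lambda(A)$: the hypothesis $(\lambda(B)-\delta I_n+\Gamma_n)\cap\partial\Gamma_n^{\sigma}\subset B_R(0)$ is precisely the hypothesis of that lemma, and $\lambda(A)$ lies in the same set. When $|\lambda(A)|>R$, Lemma~\ref{Lemma2.3} yields the dichotomy directly. When $|\lambda(A)|\leq R$, the set $\overline{B_R(0)}\cap\partial\Gamma_n^{\sigma}$ is compact and, by Lemma~\ref{Lma2.1}(2), bounded away from $\partial\Gamma_n$, so the $f_i$ there are bounded below by a positive constant and above by a constant, uniformly for $\sigma\in[a,b]$; this forces the second alternative with a uniform $\theta$. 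In the first alternative, $\sum_i f_i(\lambda(A))(\lambda_i(B)-\lambda_i(A))\geq\theta\sum_i f_i(\lambda(A))$ combines with the previous display to give $\sum_{i,j}F^{ij}(A)(B_{ij}-A_{ij})\geq\theta\sum_i F^{ii}(A)$. In the second alternative, $f_k(\lambda(A))\geq\theta\sum_i f_i(\lambda(A))$ for every $k$; writing $A=U\,\mathrm{diag}(\lambda(A))\,U^{*}$ with $U$ unitary, one has $F^{kk}(A)=\sum_i|U_{ki}|^{2}f_i(\lambda(A))\geq\min_i f_i(\lambda(A))\geq\theta\sum_i f_i(\lambda(A))=\theta\sum_i F^{ii}(A)$ for every $k$. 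Either way one of the two asserted conclusions holds, with $\theta$ depending only on $\delta$, $R$, $[a,b]$ and $n$.

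The main obstacle is the majorization step of the second paragraph: one must keep the ordering conventions for the eigenvalues of $A$ and of $B$ consistent, and verify that the direction in which $f_i$ varies along the sorted spectrum of $A$ matches the direction in which Schur--Horn and the rearrangement inequality constrain the diagonal of $B$. The remaining ingredients — the change of basis and the compactness argument for $|\lambda(A)|\leq R$ — are routine.
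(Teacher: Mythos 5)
Your proposal is correct, but it follows a different route from the paper for the main case. The paper's own proof is essentially a citation plus a compactness argument: for $|\lambda(A)|>R$ it simply invokes Sz\'ekelyhidi's Proposition 6 of \cite{Gab} verbatim, and only the case $|\lambda(A)|\leq R$ is argued, by compactness of the set $S_{R,\sigma}$ of Hermitian matrices with spectrum in $\overline{B_{R}(0)}\cap\partial\Gamma_{n}^{\sigma}$, which forces the second alternative after shrinking $\theta$. You instead re-derive the unbounded case from the vector statement (Lemma \ref{Lemma2.3}, i.e.\ Proposition 5 of \cite{Gab}): diagonalize $A$, use symmetry and concavity of $f$ to order the $f_i$, and use Schur--Horn majorization plus a rearrangement argument to get $\sum_{i,j}F^{ij}(A)(B_{ij}-A_{ij})\geq\sum_i f_i(\lambda(A))(\lambda_i(B)-\lambda_i(A))$, after which the dichotomy transfers; the identity $F^{kk}(A)=\sum_i|U_{ki}|^{2}f_i(\lambda(A))$ correctly converts the second alternative back to an arbitrary basis, and your treatment of $|\lambda(A)|\leq R$ coincides with the paper's compactness argument. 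In effect you reprove the cited Proposition 6 rather than quote it; this costs a page of linear algebra (including the ordering bookkeeping you flag, which does go through: sorting both spectra decreasingly preserves the hypothesis $\lambda(A)\in\lambda(B)-\delta I_{n}+\Gamma_{n}$, and the oppositely ordered pairing of the nondecreasing weights $f_i$ with the nonincreasing $\lambda_i(B)$ is exactly the minimizing vertex of the majorization polytope), but it buys a self-contained argument that makes explicit where the matrix-to-eigenvalue reduction and the reading of $H_{ij}$ as $A_{ij}$ enter. Your final $\theta$ depends on $\delta$, $R$, $[a,b]$ and $n$, consistent with the statement.
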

\begin{proof}
When $|\lambda(A)|>R$, the conclusion follows from \cite[Proposition 6]{Gab}. So we may assume $|\lambda(A)|\leq R$, consider the set
		\[
		S_{R,\sigma} = \left\{N\in \mathcal{H}:\mu(N)\in \overline{B_{R}(0)}\cap\partial \Gamma^{\sigma}\right\},
		\]
		where $\mathcal{H}$ denotes the set of Hermitian matrices.
	It is clear that $S_{R,\sigma}$ is compact, and then there exists a constant $C>0$ such that for $A\in S_{R,\sigma}$,
	\[
	C^{-1} \leq F^{k\bar{k}}(A) \leq C, \quad \text{for} \ k=1,2,\cdots,n.
	\]
	Decreasing $\theta$ if necessary,
	\[
	F^{k\bar{k}}(A)>\theta\sum_{p}F^{p\bar{p}}(A), \quad \text{for} \ k=1,2,\cdots,n.
	\]
\end{proof}

For the  $f$ defined in (\ref{dp'}), the following result is also important for us.
\begin{lemma}\label{Lma2.5}\cite{Gab,FLM11,Gu14,Spr}
For each $\sigma\in (0, \infty)$.
\begin{enumerate}
  \item There is a large constant $N_{0}$ depending on $\sigma$, such that for all $N\geq N_{0}$,
      \[
        \Gamma_{n}+NI_{n}\subset\Gamma_{n}^{\sigma}.
      \]
  \item There is a positive constant $\kappa$ depending on $\sigma$ such that
  \[
  \sum_{i}f_{i}(\lambda)>\kappa
  \]
  for any $\lambda\in\partial\Gamma_{n}^{\sigma}$.
\end{enumerate}
\end{lemma}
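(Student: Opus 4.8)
The plan is to reduce both assertions to the structural properties of $f$ recorded in Lemma \ref{Lma2.1}: that $f$ is increasing in each variable (since $f_i>0$) and $1$-homogeneous, and that it is concave on the convex cone $\Gamma_n$. The only explicit quantity needed is the value at $\mathbf{1}:=(1,\dots,1)$, namely $f(\mathbf{1})=\big[\sigma_n(\mathbf{1})/\sigma_{n-m}(\mathbf{1})\big]^{1/m}=\binom{n}{m}^{-1/m}>0$, using $\sigma_n(\mathbf{1})=1$ and $\sigma_{n-m}(\mathbf{1})=\binom{n}{n-m}=\binom{n}{m}$.

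For part (1), identify the matrix shift $NI_n$ with the eigenvalue shift $N\mathbf{1}$. For any $\lambda\in\Gamma_n$ all coordinates of $\lambda+N\mathbf{1}$ are positive, so monotonicity together with $1$-homogeneity gives $f(\lambda+N\mathbf{1})\geq f(N\mathbf{1})=N f(\mathbf{1})$. Hence it suffices to choose $N_0>\sigma\,\binom{n}{m}^{1/m}$: for every $N\geq N_0$ and every $\lambda\in\Gamma_n$ we then get $f(\lambda+N\mathbf{1})\geq N f(\mathbf{1})\geq N_0 f(\mathbf{1})>\sigma$, i.e. $\Gamma_n+NI_n\subset\Gamma_n^\sigma$.

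For part (2), combine concavity with Euler's identity for $1$-homogeneous functions. For arbitrary $\lambda,\mu\in\Gamma_n$, concavity of $f$ yields $f(\mu)\leq f(\lambda)+\sum_i f_i(\lambda)(\mu_i-\lambda_i)$, and since $\sum_i\lambda_i f_i(\lambda)=f(\lambda)$ by Euler, this collapses to $f(\mu)\leq\sum_i f_i(\lambda)\,\mu_i$. Taking $\mu=\mathbf{1}$ gives $\sum_i f_i(\lambda)\geq f(\mathbf{1})=\binom{n}{m}^{-1/m}$ for every $\lambda\in\Gamma_n$, in particular for every $\lambda\in\partial\Gamma_n^\sigma$; so $\kappa:=\binom{n}{m}^{-1/m}$ (or any smaller positive number) works, in fact uniformly in $\sigma$.

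There is essentially no obstacle here beyond bookkeeping; the two points that warrant a line of care are, first, that $\partial\Gamma_n^\sigma=f^{-1}(\sigma)$ lies in the \emph{open} cone $\Gamma_n$ — so that $f$ is genuinely differentiable and concave there and the tangent-plane and Euler inequalities apply — which is exactly what parts (2) and (3) of Lemma \ref{Lma2.1} guarantee; and second, that passing from the Hermitian-matrix shift $NI_n$ to the eigenvalue shift $N\mathbf{1}$ is legitimate, which is immediate. One could instead prove (2) by compactness on $\overline{B_R(0)}\cap\partial\Gamma_n^\sigma$ together with a homogeneity rescaling for $|\lambda|>R$, but the concavity argument above is shorter and produces an explicit constant.
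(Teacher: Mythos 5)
Your proof is correct; the paper itself offers no argument for this lemma, simply citing \cite{Gab,FLM11,Gu14,Spr}, and your monotonicity-plus-homogeneity proof of (1) and concavity-plus-Euler proof of (2) are exactly the standard arguments used in those references. One small point: since $\sum_i f_i(\lambda)=f(1,\dots,1)=\binom{n}{m}^{-1/m}$ with equality when $\lambda$ is a positive multiple of $(1,\dots,1)$, the strict inequality in (2) forces $\kappa$ to be chosen strictly smaller than $\binom{n}{m}^{-1/m}$, which your parenthetical already allows.
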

We will use the following corollary frequently in the next sections.
\begin{corollary}\label{Cor2.6}
Assume $\underline{u}$ is an admissible $\mathcal{C}$-subsolution and $u$ is the solution for (\ref{dp}). Then
there exists a constant $\theta>0$ (depending only on $\delta$) such that if $|\lambda_{*}(\tilde{g}_{i\bar{j}})|>R$ for some constant $R$ (depending on the allowed data) such that \[
(\lambda_{*}(\bar{g}_{i\bar{j}})-\delta I_{n}+\Gamma_{n})\cap \partial\Gamma_{n}^{h}\subset B_{R}(0).
\] Then either
\begin{equation}\label{2..25}
\mathcal{L}(\underline{u}-u)=\sum_{i,j}{G}^{i\bar{j}}(\tilde{g})(\bar{g}_{i\bar{j}}-\tilde{g}_{i\bar{j}})\geq \theta\sum_{i}{G}^{i\bar{i}}(\tilde{g})
\end{equation}
or
\begin{equation}\label{2..26}
{G}^{k\bar{k}}(\tilde{g})\geq \theta\underset{i}{\sum}{G}^{i\bar{i}}(\tilde{g}),~ \textrm{for}~ k=1,2,\cdots,n
\end{equation}
 if  $\lambda_{*}(\tilde{g}_{i\bar{j}})
 \in(\lambda_{*}(\bar{g}_{i\bar{j}})-\delta I_{n}+\Gamma_{n})\cap \partial\Gamma_{n}^{h}$.

In addition, there is a constant $\Theta>0$ depending on $h$ such that
  \begin{equation}\label{2..27}
  \mathcal{G}=\sum_{i}{G}^{i\bar{i}}(\tilde{g})>\Theta,~ \textrm{if}~\lambda_{*}(\tilde{g}_{i\bar{j}})\in\partial\Gamma_{n}^{h}.
  \end{equation}

\end{corollary}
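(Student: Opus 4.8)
The plan is to deduce Corollary \ref{Cor2.6} from the two preceding "subsolution dichotomy" lemmas (Lemma \ref{Lemma2.3} and its Hermitian counterpart, Proposition 6 of \cite{Gab}) together with the proportionality $G^{i\bar j}=\tfrac{m\hat h}{h}F^{i\bar j}$ observed right after \eqref{dp''}. First I would fix a point $x\in M$ and choose the local $g$-orthonormal frame diagonalizing $\tilde g_{i\bar j}$, so that $G^{i\bar j}(\tilde g)$ and $F^{i\bar j}(\tilde g)$ are simultaneously diagonal, and the statement reduces to an inequality between the eigenvalues $\lambda_*(\tilde g_{i\bar j})$ and $\lambda_*(\bar g_{i\bar j})$. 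Because $\underline{u}$ is an admissible $\mathcal{C}$-subsolution, \eqref{2..12} gives uniform $\delta, R>0$ with $(\lambda_*(\bar g)-\delta I_n+\Gamma_n)\cap\partial\Gamma_n^{h(x)}\subset B_R(0)$, and the solution satisfies $\lambda_*(\tilde g_{i\bar j})\in\partial\Gamma_n^{h}$ exactly because $F(\tilde g_{i\bar j})=h$, i.e. \eqref{dp'} holds. (Here one should also note $h$ ranges over a compact subinterval $[a,b]\subset(0,\infty)$ since $\psi\ge c>0$ is smooth on the compact $M$, so the constant $\theta$ can be taken uniform in $x$.)

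Next I would apply Lemma \ref{Lemma2.3} with $\mu=\lambda_*(\bar g_{i\bar j})$, $\sigma=h(x)$, and $\lambda=\lambda_*(\tilde g_{i\bar j})$. Provided $|\lambda_*(\tilde g_{i\bar j})|>R$ — which is the hypothesis of the corollary — the lemma yields one of the two alternatives for $f$: either $\sum_i f_i(\lambda)(\mu_i-\lambda_i)\ge\theta\sum_i f_i(\lambda)$ or $f_k(\lambda)\ge\theta\sum_i f_i(\lambda)$ for all $k$. Now I translate back: in the diagonal frame $F^{i\bar i}(\tilde g)=f_i(\lambda_*(\tilde g))$, and the first alternative reads $\sum_i F^{i\bar i}(\tilde g)(\bar g_{i\bar i}-\tilde g_{i\bar i})\ge\theta\sum_i F^{i\bar i}(\tilde g)$; multiplying through by the positive factor $m\hat h/h$ converts every $F^{i\bar i}$ into $G^{i\bar i}$ and gives \eqref{2..25} (after recognizing the left side as $\mathcal{L}(\underline{u}-u)$ via the definition $\mathcal{L}=\sum_{i,j}G^{i\bar j}(e_i\bar e_j-[e_i,\bar e_j]^{0,1})$ and $\bar g_{i\bar j}-\tilde g_{i\bar j}=g_{i\bar j}(\underline u)-g_{i\bar j}(u)$, whose second-order part is precisely $e_i\bar e_j(\underline u-u)-[e_i,\bar e_j]^{0,1}(\underline u-u)$). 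The same multiplication turns the second alternative into \eqref{2..26}. One subtlety is that the $\theta$ from Lemma \ref{Lemma2.3} depends only on $\delta$, which is uniform, and the factor $m\hat h/h$ is bounded above and below by positive constants depending only on $\psi$ (hence on the allowed data), so after shrinking $\theta$ we obtain a single uniform constant; this is where I would be most careful bookkeeping-wise, but it is routine.

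For the last assertion \eqref{2..27}, I would invoke Lemma \ref{Lma2.5}(2): since $\lambda_*(\tilde g_{i\bar j})\in\partial\Gamma_n^{h}$ and $h$ lies in a fixed compact interval $[a,b]\subset(0,\infty)$, there is $\kappa>0$ with $\sum_i f_i(\lambda_*(\tilde g))>\kappa$, hence $\sum_i F^{i\bar i}(\tilde g)>\kappa$; multiplying by $m\hat h/h\ge$ const $>0$ gives $\mathcal{G}=\sum_i G^{i\bar i}(\tilde g)>\Theta$ for a constant $\Theta$ depending only on $h$ (equivalently on $\psi$). I do not expect any genuine obstacle here: the entire corollary is an essentially formal transcription of the quoted $f$-level lemmas into the $G$-linearization language via the proportionality $G^{i\bar j}=\tfrac{m\hat h}{h}F^{i\bar j}$; the only points requiring attention are (i) checking that $|\lambda_*(\tilde g_{i\bar j})|>R$ is genuinely the hypothesis that feeds Lemma \ref{Lemma2.3}, and (ii) ensuring all constants ($\delta$, $R$, $\theta$, $\Theta$, and the bounds on $m\hat h/h$) are uniform over $M$, which follows from compactness of $M$ and $\psi\ge c>0$ smooth.
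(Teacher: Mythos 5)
Your plan is the intended one: the paper offers no separate proof of Corollary \ref{Cor2.6}, treating it as an immediate consequence of the two dichotomy lemmas, the proportionality $G^{i\bar j}=\tfrac{m\hat h}{h}F^{i\bar j}$, and Lemma \ref{Lma2.5}(2), and your handling of \eqref{2..27}, of the identification $\mathcal{L}(\underline u-u)=\sum_{i,j}G^{i\bar j}(\bar g_{i\bar j}-\tilde g_{i\bar j})$, and of the uniformity of the constants (via $\psi\ge c>0$ on compact $M$, so $h,\hat h$ range in compact subsets of $(0,\infty)$) is fine. The one step you should repair is the passage to \eqref{2..25}: after applying the eigenvalue version (Lemma \ref{Lemma2.3}) with $\mu=\lambda_*(\bar g_{i\bar j})$ you assert that, in the frame diagonalizing $\tilde g$, the first alternative ``reads'' $\sum_i F^{i\bar i}(\tilde g)(\bar g_{i\bar i}-\tilde g_{i\bar i})\ge\theta\sum_i F^{i\bar i}(\tilde g)$. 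That is not a relabeling: $\bar g$ need not be diagonal in the frame that diagonalizes $\tilde g$, so its diagonal entries $\bar g_{i\bar i}$ are in general not the eigenvalues $\mu_i$ appearing in Lemma \ref{Lemma2.3}, and passing from the eigenvalue inequality to the matrix-entry inequality is exactly the nontrivial content of the Hermitian-matrix version stated right after Lemma \ref{Lemma2.3} (Proposition 6 of \cite{Gab}). The clean derivation is to apply that matrix lemma directly with $A=\tilde g$, $B=\bar g$, $\sigma=h(x)$, obtaining either $\sum_{i,j}F^{i\bar j}(\tilde g)(\bar g_{i\bar j}-\tilde g_{i\bar j})\ge\theta\sum_i F^{i\bar i}(\tilde g)$ or $F^{k\bar k}(\tilde g)\ge\theta\sum_i F^{i\bar i}(\tilde g)$ for all $k$, and then multiply by the uniformly positive and bounded factor $m\hat h/h$ exactly as you do. Since you cite that matrix lemma in your opening sentence, this is a matter of invoking it where it is actually needed rather than a missing idea, but as literally written your translation step would not stand on its own.
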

\section{oscillation estimate}
The following $L^{1}$-estimate is well-known, for instance, see \cite{CTW} et al.
\begin{proposition}\label{Prop3.1}\cite[Proposition 2.3]{CTW}
There exists a constant $C_{0}>0$ depending only on $(M,\omega, J)$ such that for every smooth real function $u\in \mathcal{H}(M)$ with $\sup_{M}(u-\underline{u})=0$. Then
\begin{equation}\label{}
\int_{M}|u-\underline{u}|\omega^{n}\leq C_{0}.
\end{equation}
\end{proposition}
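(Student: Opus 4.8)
The plan is to reduce the estimate to a linear differential inequality $\operatorname{tr}_{\omega}(\ddbar v)\ge -A$ with $v\le 0$, and then to extract the $L^{1}$ bound from the Green's function of an auxiliary elliptic operator, following Yau's argument in the K\"ahler case but with the invariant probability measure of a (non-self-adjoint) Laplacian in place of the Riemannian volume. First I would set $v=u-\underline{u}$, so that $\sup_{M}v=0$, hence $v\le 0$ and $\int_{M}|u-\underline{u}|\,\omega^{n}=\int_{M}(-v)\,\omega^{n}$; since $\chi_{u}=\chi_{\underline{u}}+\ddbar v>0$ while $\chi_{\underline{u}}\le\tau^{-1}\omega$ by \eqref{2.12}, one has $\ddbar v>-\tau^{-1}\omega$, and taking traces with respect to $\omega$,
\[
\Delta v:=\operatorname{tr}_{\omega}(\ddbar v)=\sum_{i}\bigl(e_{i}\bar{e}_{i}v-[e_{i},\bar{e}_{i}]^{0,1}v\bigr)>-n\tau^{-1}=:-A .
\]
Here $\Delta$ is the ``unweighted'' version of $\mathcal{L}$: a real, uniformly elliptic, second order operator with smooth coefficients determined by $(M,g,J)$ and with no zeroth order term, so that $\Delta 1=0$.

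Next, since $\Delta 1=0$ and $M$ is compact, the formal $L^{2}(\omega^{n})$-adjoint $\Delta^{*}$ has a one-dimensional kernel spanned by a smooth positive density $\rho$ (by the strong maximum principle, equivalently Krein--Rutman applied to a resolvent); I would normalize $\int_{M}\rho\,\omega^{n}=1$ and set $d\mu=\rho\,\omega^{n}$, so that $\int_{M}(\Delta w)\,d\mu=0$ for every $w\in C^{2}(M)$. Then $\Delta$ has a Green's function $G(x,y)$ relative to $d\mu$, smooth off the diagonal, with $G(x,y)\to-\infty$ as $y\to x$ (so in particular $G\le C_{1}$ for a constant $C_{1}=C_{1}(M,g,J)$), with $\int_{M}G(x,y)\,d\mu(y)=0$ for every $x$, and satisfying the representation $w(x)=\int_{M}w\,d\mu+\int_{M}G(x,y)(\Delta w)(y)\,d\mu(y)$.

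To conclude, let $x_{0}\in M$ be a maximum point of $v$, so $v(x_{0})=0$. Evaluating the representation at $x_{0}$ and using $\int_{M}(\Delta v)\,d\mu=0$ to replace $G(x_{0},\cdot)$ by $G(x_{0},\cdot)-C_{1}\le 0$, we get $0=\int_{M}v\,d\mu+\int_{M}\bigl(G(x_{0},y)-C_{1}\bigr)(\Delta v)(y)\,d\mu(y)$. Since $G(x_{0},\cdot)-C_{1}\le 0$ and $\Delta v>-A$, the last integrand is $\le A\bigl(C_{1}-G(x_{0},\cdot)\bigr)$ pointwise, so that integral is $\le AC_{1}$, and therefore $\int_{M}(-v)\,d\mu\le AC_{1}$. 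As $\rho\ge\rho_{\min}>0$, this yields $\int_{M}|u-\underline{u}|\,\omega^{n}=\int_{M}(-v)\,\omega^{n}\le\rho_{\min}^{-1}AC_{1}=:C_{0}$, with $C_{0}$ controlled by the ellipticity of $\Delta$ (that is, by $(M,g,J)$) and by the constant $\tau$ of \eqref{2.12}.

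The only real content is the middle step. On a K\"ahler manifold $\operatorname{tr}_{\omega}\ddbar$ is a multiple of the Laplace--Beltrami operator, $\rho$ is constant, and one recovers Yau's argument verbatim; on a general almost Hermitian manifold the operator fails to be self-adjoint — the torsion of $\omega$ and the non-integrability of $J$ contribute first order terms and $\int_{M}(\operatorname{tr}_{\omega}\ddbar w)\,\omega^{n}$ need not vanish — so one genuinely needs the positive adjoint eigenfunction $\rho$ and the Green's function of the resulting non-divergence-form operator. As an alternative I could bypass Green's functions entirely: $-v\ge 0$ is a nonnegative supersolution of $\Delta(-v)<A$, so the De Giorgi--Nash--Moser weak Harnack inequality, applied on a finite cover of $M$ by coordinate balls and chained along a path to the vanishing point $x_{0}$, gives $\|{-v}\|_{L^{1}(M)}\le C(A)$ directly; this route is equally short and manifestly insensitive to lower order terms.
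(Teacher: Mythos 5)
Your proposal is correct and follows essentially the same route as the source the paper relies on: the paper does not prove Proposition \ref{Prop3.1} itself but imports it from \cite[Proposition 2.3]{CTW}, and the argument there is exactly this reduction --- via \eqref{2.12}, $v=u-\underline{u}\le 0$ satisfies $\operatorname{tr}_{\omega}(\ddbar v)\ge -n\tau^{-1}$ --- followed by a Green's-function (equivalently, chained weak-Harnack) estimate for the canonical Laplacian $\operatorname{tr}_{\omega}\ddbar$, an elliptic operator with no zeroth order term whose lack of self-adjointness is absorbed precisely as you indicate, through a suitably normalized positive density/Green's kernel. The only nuance is bookkeeping: since your reference form is $\chi_{\underline{u}}$ rather than $\omega$, the constant you obtain also depends on $\tau$ from \eqref{2.12}, hence on $\chi$ and $\underline{u}$, which is all that is needed for the later estimates even though the proposition as stated records dependence on $(M,\omega,J)$ only.
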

The main result of this section is the following proposition.
\begin{proposition}\label{Prop3.2}
Let $u$ be the solution for (\ref{dp}) with $\sup_{M}(u-\underline{u})=0$. Then
\begin{equation}\label{}
osc_{M}(u-\underline{u})\leq C
\end{equation}
for some constant $C>0$ depending on the allowed data.
\end{proposition}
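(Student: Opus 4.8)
The plan is to follow the by-now standard strategy introduced by Błocki and refined in \cite{CTW,Gab}, based on the Alexandroff--Bakelman--Pucci (ABP) maximum principle together with the $\mathcal{C}$-subsolution hypothesis. Suppose for contradiction that $L := -\inf_M(u-\underline{u})$ is large; set $w = u - \underline{u}$, so $\sup_M w = 0$ and $w$ attains a deep minimum $-L$ at some point $x_0$. Working in a coordinate chart centered at $x_0$ (small fixed size, identified with a Euclidean ball $B$ via the exponential map or a holomorphic-type chart adapted to $J$), consider the auxiliary function $v = w + \epsilon|x|^2$ for a small fixed $\epsilon > 0$. The ABP estimate applied to $v$ on $B$ gives
\[
c_0 \epsilon^{2n} \leq \int_{\{v = \Gamma_v\}} \det(D^2 v),
\]
where $\Gamma_v$ is the convex envelope of $v$ and $\{v = \Gamma_v\}$ is the contact set; the point is that on the contact set $D^2 v \geq 0$ as a \emph{real} Hessian, hence the complex Hessian $(v_{i\bar j}) \geq 0$ there as well, and so at contact points one has $\tilde g_{i\bar j} = g_{i\bar j}(u) = \bar g_{i\bar j} + v_{i\bar j} - \epsilon(\text{bounded}) + \ldots \geq \bar g_{i\bar j} - C\epsilon \cdot I$ up to controlled error terms. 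The key structural input is that for such nonnegative complex Hessians, the eigenvalue vector $\lambda_*(\tilde g_{i\bar j})$ lies in $\lambda_*(\bar g_{i\bar j}) - \delta I_n + \Gamma_n$ (shrinking $\epsilon$ so $C\epsilon < \delta$), and by the equation $\chi_u^n = \psi \chi_u^{n-m}\wedge\omega^m$ it also lies on $\partial\Gamma_n^{h(x)}$ — so by the $\mathcal{C}$-subsolution definition \eqref{2..11}–\eqref{2..12} the set of such eigenvalue vectors is \emph{bounded}, uniformly over $M$.

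The next step is to turn that eigenvalue bound into an integral bound the opposite way. On the contact set, $0 \leq D^2 v$, and one estimates $\det(D^2 v) = \det(D^2 w + 2\epsilon I)$ from above using: (i) the complex Hessian part of $w$ is essentially $\tilde g_{i\bar j} - \bar g_{i\bar j}$, which is bounded by the previous paragraph; (ii) the "real but not complex" directions — here is where the almost-complex and Hermitian (non-Kähler) structure intervenes, producing first-order terms in $w$ from the $[e_i,\bar e_j]^{0,1}$ brackets and from the non-integrability of $J$, which must be absorbed. Following \cite{CTW}, the real Hessian on the contact set is controlled by its "complexified" part plus lower-order terms, giving $\det(D^2 v) \leq C$ pointwise on $\{v=\Gamma_v\}$ with $C$ independent of $L$. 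Combined with the fact that the contact set is contained in the region where $v$ is close to its minimum, i.e.\ where $w \leq -L/2$ roughly, one gets
\[
c_0 \epsilon^{2n} \leq C \cdot \big|\{v = \Gamma_v\}\big| \leq C \cdot \big|\{w < -L/2\} \cap B\big|.
\]
Finally one invokes the $L^1$ bound of Proposition \ref{Prop3.1}: since $\int_M |w|\,\omega^n \leq C_0$, the measure of the super-level set $\{|w| > L/2\}$ is at most $2C_0/L$, which tends to $0$ as $L \to \infty$. This contradicts the fixed positive lower bound $c_0\epsilon^{2n}$ once $L$ is large enough, yielding $osc_M(u - \underline{u}) = L \leq C$.

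The main obstacle — and the one place genuine care beyond bookkeeping is required — is the passage from the \emph{real} Hessian bound on the contact set to a usable bound on $\lambda_*(\tilde g_{i\bar j})$ in the almost-Hermitian setting. Unlike the Kähler case, $g_{i\bar j}(u)$ is not simply $g_{i\bar j} + \partial_i\partial_{\bar j} u$: it contains the torsion term $[e_i,\bar e_j]^{0,1}u$, so nonnegativity of the real Hessian $D^2 w \geq 0$ at a contact point does not immediately give $(\tilde g_{i\bar j}) \geq (\bar g_{i\bar j})$. One has to use that the torsion and the failure of $d\omega = 0$ contribute only through $u$ and its first derivatives (bounded on the compact chart, or absorbable into the $\epsilon|x|^2$ perturbation and the choice of chart), not through its second derivatives, so that the second-order comparison survives with an error $O(\epsilon) + O(\text{first order})$ — small enough to stay inside the $-\delta I_n$ margin in \eqref{2..12}. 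This is exactly the modification of the ABP argument carried out in \cite{CTW} for the complex Monge--Ampère equation on almost Hermitian manifolds, and the same adaptation applies here with the role of the $\mathcal{C}$-subsolution replacing the Laplacian inequality (as the author notes in the structure-of-the-paper discussion). A secondary technical point is verifying that $h(x)$ ranges in a compact subinterval $[a,b] \subset (0,\infty)$ — which follows from $\psi \geq c > 0$ smooth on compact $M$ — so that the constants $\delta, R, \theta$ from Lemma \ref{Lemma2.3} are genuinely uniform.
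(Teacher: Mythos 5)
Your plan follows the paper's proof essentially verbatim: perturb by $\varepsilon|x|^{2}$, apply the modified ABP estimate, use nonnegativity of the real Hessian on the contact set together with the $\mathcal{C}$-subsolution condition to place $\lambda_{*}(\tilde{g}_{i\bar{j}})$ in the bounded set $(\lambda_{*}(\bar{g}_{i\bar{j}})-\delta I_{n}+\Gamma_{n})\cap\partial\Gamma_{n}^{h}$, deduce $\det(D^{2}v)\leq C$ on the contact set, and conclude via the $L^{1}$ bound of Proposition \ref{Prop3.1}. The one point to state precisely is that the smallness of the first-order error terms comes not from any a priori bound on $|Du|$ (none is available at this stage) but from the gradient restriction $|Dv|\leq\varepsilon/2$ built into the modified ABP contact set $\mathcal{S}$, which is exactly how the paper absorbs the $E(D(u-\underline{u}))$ term before invoking the $\mathcal{C}$-subsolution condition.
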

\begin{proof}
From the hypothesis, it suffices to estimate the infimum $m_{0}=\inf_{M}(u-\underline{u})$, we may assume $m_{0}$ is attained at the origin. Choose a local coordinate chart $\{x^{1},\cdots, x^{2n}\}$ in a neighborhood of the origin containing the unit ball $B_{1}\subset \mathbb{R}^{2n}$.

Consider the test function
\[
v= u-\underline{u}+\ve\sum_{i=1}^{2n}(x^{i})^{2}
\]
for a small $\ve>0$ determined later. Then we have
\[v(0)=m_{0}~ \textrm{and}~v\geq m_{0}+\ve ~\textrm{on}~ \partial B_{1}.\]
We define the lower contact set of $v$ by
\[
\mathcal{S}=\Big\{x\in B_{1}: |Dv(x)|\leq \frac{\ve}{2},
v(y)\geq v(x)+Dv(x)\cdot (y-x),\forall y\in B_{1}\Big\}.
\]
~\\
By the modified Alexandroff-Bakelman-Pucci maximum principle (see \cite{Blo13} or \cite[Proposition 10]{Gab} for instance), there is  constant $c_{0}=c_{0}(n)>0$  such that
\begin{equation}\label{3.5}
c_{0}\ve^{2n}\leq \int_{\mathcal{S}}\det(D^{2}v).
\end{equation}
In what follows, the constant $C$ below in this section may be changed from line to line, but it depends on the allowed data.
Note that $0\in \mathcal{S}$ and $D^{2}v\geq 0$ on $\mathcal{S}$. Moreover,
\[
\big(D^{2}(u-\underline{u})\big)^{J}(x)\geq (D^{2}v)^{J}(x)-C\ve Id\geq -C\ve Id,
\]
and using the fact $|D(u-\underline{u})|\leq \frac{5\ve}{2}$ on $\mathcal{S}$ by definition, which implies \[
\begin{split}
H(u)-H(\underline{u})=&\big(D^{2}u+E(Du)\big)^{J}-\big(D^{2}\underline{u}
+E(D\underline{u})\big)^{J}  \\
    =&\big(D^{2}(u-\underline{u})\big)^{J}+ \big(E(D(u-\underline{u}))\big)^{J}\\
    \geq& -C\ve Id.
\end{split}
\]
Hence
$\chi_{u}-\chi_{\underline{u}}\geq -C\ve\omega.$
Therefore, if we choose $\ve$ sufficient small such that $C\ve\leq \delta$, then
$\lambda_{*}(\tilde{g}_{i\bar{j}})\in \lambda_{*}(\bar{g}_{i\bar{j}})-\delta I_{n}+\Gamma_{n}.$
On the other hand, the equation (\ref{dp'}) implies $\lambda_{*}(\tilde{g}_{i\bar{j}})\in \partial\Gamma_{n}^{h(x)}.$
Consequently, \[\lambda_{*}(\tilde{g}_{i\bar{j}})\in (\lambda_{*}(\bar{g}_{i\bar{j}})-\delta I_{n}+\Gamma_{n})\cap \partial\Gamma_{n}^{h(x)} \subset B_{R}(0)\] for some $R>0$ as (\ref{2..12}). This gives an upper bound for $H(u)$ and hence also for $H(v)-E(Dv)$ on $\mathcal{S}$. The rest of the proof is analogous to \cite{CTW}, we present it on the below for completeness .

Using the fact $\det(A+B)\geq \det(A)+\det(B)$ for positive definite Hermitian matrices $A, B$, on $\mathcal{S}$, we have
\begin{equation*}\label{3.10}
\begin{split}
\det(D^{2}v)\leq & 2^{2n-1}\det((D^{2}v)^{J}) \\
              =  & 2^{2n-1}\det(H(v)-E(Dv))\leq C.
\end{split}
\end{equation*}
Plugging it into (\ref{3.5}) gives us
\[
c_{0}\ve^{2n}\leq C|\mathcal{S}|.
\]
For each $x\in \mathcal{S}$,  we have $D^{2}v(x)\geq 0$, then
\[
m_{0}=v(0)\geq v(x)-|Dv(x)||x|\geq v(x)-\frac{\ve}{2}.
\]
We may assume $m_{0}+\ve< 0$ (otherwise we are done), then, on $\mathcal{S}$,
\[
-v\geq |m_{0}+\ve|.
\]
It follows from Proposition \ref{Prop3.1} that
\[
c_{0}\ve^{2n}\leq C|\mathcal{S}|\leq C\frac{\int_{\mathcal{S}}(-v)\omega^{n}}{|m_{0}+\ve|}\leq \frac{C}{|m_{0}+\ve|}.
\]
Then this gives a uniform lower bound for $m_{0}$.
\end{proof}
\section{First order estimate}
\begin{proposition}
Let $u$ (resp. $\underline{u}$) be the solution (resp. $\mathcal{C}$-subsolution) for (\ref{dp}) with $\sup_{M}(u-\underline{u})=0$. Then
\begin{equation}\label{}
|\partial u|\leq C_{0}
\end{equation}
for some positive constant $C_{0}$ depending on the allowed data and $\underline{u}$.
\end{proposition}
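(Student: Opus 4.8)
The plan is to run a maximum-principle argument on a suitable auxiliary function and to feed in the $\mathcal{C}$-subsolution through Corollary \ref{Cor2.6}. By Proposition \ref{Prop3.2} we may normalize $\sup_{M}(u-\underline{u})=0$, so $\beta:=u-\underline{u}$ takes values in a controlled interval $[-L,0]$. In a local unitary frame set $\rho=|\partial u|^{2}=\sum_{k}|e_{k}u|^{2}$. I would study
\[
Q=\log\rho+\varphi(\beta),
\]
where $\varphi$ is smooth, strictly decreasing and strictly convex on $[-L,0]$ with $\varphi''\geq(\varphi')^{2}$ and $|\varphi'|$ bounded above and below by positive constants depending only on the allowed data (for instance $\varphi(t)=(t+L+1)^{-1}$). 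If $\sup_{M}\rho$ is already bounded by a fixed constant we are done, so assume $Q$ attains its maximum at $x_{0}$ with $\rho(x_{0})$ large; choose there a unitary frame with $\{\tilde g_{i\bar j}\}$ diagonal, so that ellipticity of $\mathcal{L}$ gives $\mathcal{L}Q(x_{0})\leq 0$.

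The main computation is that of $\mathcal{L}\rho$. Differentiating \eqref{dp''} once along $e_{k}$ yields $\sum_{i,j}G^{i\bar j}e_{k}(\tilde g_{i\bar j})=-e_{k}\hat h$, and after commuting derivatives this lets one solve for the third-order contractions $\sum_{i,j}G^{i\bar j}e_{i}\bar e_{j}e_{k}u$ modulo lower order. On an almost Hermitian manifold one must carefully track the terms produced by $[e_{i},\bar e_{j}]$ and by the torsion of the canonical connection; the algebraic fact keeping these harmless is the homogeneity identity $\sum_{i}\lambda_{i}G^{i\bar i}=m\hat h$, in particular $0<\lambda_{i}G^{i\bar i}\leq m\hat h$, which prevents the torsion terms from seeing the still-uncontrolled largest eigenvalue of $\tilde g_{i\bar j}$. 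After Cauchy--Schwarz one arrives, with $\mathcal{G}=\sum_{i}G^{i\bar i}$, at a bound of the shape
\[
\mathcal{L}\rho\ \geq\ \tfrac12\,\mathcal{T}-C\mathcal{G}\rho-C\mathcal{G}-C\sqrt{\rho},\qquad
\mathcal{T}:=\sum_{i,k}G^{i\bar i}\big(|e_{i}e_{k}u|^{2}+|\bar e_{i}e_{k}u|^{2}\big)\ \geq\ 0 .
\]

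Next I expand $\mathcal{L}Q=\tfrac{\mathcal{L}\rho}{\rho}-\tfrac{1}{\rho^{2}}\sum_{i}G^{i\bar i}|e_{i}\rho|^{2}+\varphi'\mathcal{L}\beta+\varphi''\sum_{i}G^{i\bar i}|e_{i}\beta|^{2}$, use the critical-point relations $e_{i}\rho=-\varphi'\rho\,e_{i}\beta$ to rewrite the second term as $(\varphi')^{2}\sum_{i}G^{i\bar i}|e_{i}\beta|^{2}$, and absorb it into the $\varphi''$-term via $\varphi''\geq(\varphi')^{2}$; what remains gives
\[
0\ \geq\ \tfrac{1}{2\rho}\,\mathcal{T}-C\mathcal{G}-\tfrac{C}{\sqrt{\rho}}+\varphi'\mathcal{L}\beta .
\]
Here $\mathcal{L}\beta=\mathcal{L}u-\mathcal{L}\underline{u}=m\hat h-\sum_{i,j}G^{i\bar j}\bar g_{i\bar j}$, so $|\mathcal{L}\beta|\leq C\mathcal{G}$ by \eqref{2.12}, while $\mathcal{G}>\Theta$ by \eqref{2..27}. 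Moreover the critical-point relation forces a lower bound on $\mathcal{T}$: if $e_{1}$ realizes $\max_{k}|e_{k}u|^{2}$ then $|e_{1}u|^{2}\geq\rho/n$, hence $|e_{1}\beta|\gtrsim\sqrt{\rho}$ for $\rho$ large, so $|e_{1}\rho|=|\varphi'|\rho|e_{1}\beta|\gtrsim|\varphi'|\rho^{3/2}$; comparing with $|e_{1}\rho|^{2}\leq 2\rho\sum_{k}(|e_{1}e_{k}u|^{2}+|\bar e_{1}e_{k}u|^{2})$ gives $\mathcal{T}\geq c\,G^{1\bar 1}(\varphi')^{2}\rho^{2}$.

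Finally I apply Corollary \ref{Cor2.6} at $x_{0}$ (legitimate since $\lambda_{*}(\tilde g_{i\bar j})(x_{0})\in\partial\Gamma_{n}^{h}$). In the alternative \eqref{2..26} one has $G^{1\bar 1}\geq\theta\mathcal{G}$, hence $\mathcal{T}\geq c\theta\mathcal{G}(\varphi')^{2}\rho^{2}$; inserting this into the displayed inequality, using $|\mathcal{L}\beta|\leq C\mathcal{G}$ and dividing by $\mathcal{G}$, gives $c\theta(\varphi')^{2}\rho\leq C(1+|\varphi'|)$, so $\rho$ is bounded. In the alternative \eqref{2..25} one has $\mathcal{L}(\underline{u}-u)\geq\theta\mathcal{G}$, i.e. $\varphi'\mathcal{L}\beta\geq|\varphi'|\theta\mathcal{G}$, and the displayed inequality becomes $\tfrac1{2\rho}\mathcal{T}+|\varphi'|\theta\mathcal{G}\leq C\mathcal{G}+C/\sqrt{\rho}$; combining again the lower bound for $\mathcal{T}$ with $\mathcal{G}>\Theta$, the left side is forced to grow with $\rho$ while the right side stays $O(\mathcal{G})$, which is impossible once $\rho$ is large. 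In either case $\rho(x_{0})$, hence $\sup_{M}|\partial u|^{2}$, is controlled. The step I expect to be the main obstacle is precisely the computation of $\mathcal{L}\rho$: because $J$ is not integrable, differentiating \eqref{dp''} produces a cascade of third-order and torsion error terms, and the delicate point is to show each of them can be absorbed --- some into $\mathcal{T}$ after Cauchy--Schwarz, the rest into $C\mathcal{G}\rho$ --- without spoiling the coefficient in front of $\mathcal{T}$; this is where the structure equations of $(M,g,J)$ and the identity $\lambda_{i}G^{i\bar i}\leq m\hat h$ enter, in the scheme of \cite{CTW}. A secondary subtlety is the bookkeeping that lets the two alternatives of Corollary \ref{Cor2.6} each close the argument while keeping $\varphi$ compatible with $\varphi''\geq(\varphi')^{2}$.
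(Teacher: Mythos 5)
Your overall scheme (maximum principle on a gradient quantity weighted by a function of $u-\underline{u}$, differentiating \eqref{dp''} once, Cauchy--Schwarz on the commutator terms, and then the dichotomy of Corollary \ref{Cor2.6}) is the same as the paper's, and your treatment of the alternative \eqref{2..26} does close, since there $G^{1\bar 1}\geq\theta\mathcal{G}$ makes the term $\mathcal{T}/(2\rho)\gtrsim G^{1\bar1}(\varphi')^{2}\rho$ dominate. But the argument does not close in the alternative \eqref{2..25}, and this is a genuine gap. Your lower bound for $\mathcal{L}\rho$ necessarily contains the torsion/curvature error $-C\mathcal{G}\rho$ (this is exactly the term $-C|\partial u|^{2}\mathcal{G}$ appearing in the paper's computation), which after dividing by $\rho$ gives a term $-C\mathcal{G}$ with $C$ a fixed constant determined by $(M,g,J)$, $\psi$ and $\underline{u}$. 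Against it you only have $\varphi'\mathcal{L}\beta\geq|\varphi'|\theta\mathcal{G}$ with $|\varphi'|\leq 1$ (any $\varphi$ satisfying your constraint $\varphi''\geq(\varphi')^{2}$ on a fixed interval has $|\varphi'|$ bounded, and you cannot guarantee the maximum point sits where $|\varphi'|$ is large), so $|\varphi'|\theta\mathcal{G}$ cannot absorb $C\mathcal{G}$. Nor does $\mathcal{T}/(2\rho)\gtrsim G^{1\bar1}(\varphi')^{2}\rho$ save you: in the branch \eqref{2..25} there is no lower bound on $G^{1\bar1}$ relative to $\mathcal{G}$ (here the index $1$ is the direction of the largest gradient component, and at this stage $\lambda_{1}(\tilde g)$ is uncontrolled, so $G^{1\bar1}=\sigma_{m-1;1}(\tilde g^{1\bar1})^{2}$ may be arbitrarily small compared with $\mathcal{G}$), so "the left side grows with $\rho$" is not justified. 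The paper avoids precisely this trap by using the weight $e^{w}$ with $w=Ae^{B\eta}$ and dividing the maximum-principle inequality by $Bw$: every bad term not coming from the subsolution then carries a factor $C/B$ or $C/(Bw)$ and is killed by taking $B$ large, while $\mathcal{L}(\eta)\geq\theta\mathcal{G}$ enters with coefficient $1$; moreover in the branch \eqref{2..25} a further split is needed (some $G^{j\bar j}\geq D$ versus all $G^{j\bar j}\leq D$, the latter using $G^{i\bar i}\tilde g_{i\bar i}^{2}\leq C$, which follows from the equation and the bound on $G^{1\bar1}$) to control the term $2G^{i\bar i}\tilde g_{i\bar i}\,\mathrm{Re}\{\eta_{i}u_{\bar i}\}/|\partial u|^{2}\geq-4m\hat h$. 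Your convexity trick $\varphi''\geq(\varphi')^{2}$, by spending all of $\varphi''$ to cancel $-|e_{i}\rho|^{2}/\rho^{2}$, also leaves no surviving positive term $G^{i\bar i}|e_{i}\beta|^{2}$ with a large coefficient, which is what the paper keeps (the term $\tfrac{B}{2}G^{i\bar i}|\eta_{i}|^{2}$) and uses to finish in its remaining cases.

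A secondary point: the dichotomy in Corollary \ref{Cor2.6} is only available when $|\lambda_{*}(\tilde g_{i\bar j})|>R$; you invoke it citing only $\lambda_{*}(\tilde g)\in\partial\Gamma_{n}^{h}$ and never treat the case $|\lambda_{*}(\tilde g)|\leq R$. That case is easy (then all $\tilde g^{k\bar k}$, hence all $G^{k\bar k}$, are bounded above and below, and an argument like your \eqref{2..26} branch, or the paper's Case 2 using the surviving $B\,G^{i\bar i}|\eta_{i}|^{2}$ term, applies), but it must be stated.
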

~\\ \textit{Proof.}
Let $w= Ae^{B\eta}$ for $\eta=\underline{u}-u-\inf_{M}(\underline{u}-u),$ where $A, B>0$\footnote{The $C, C_{0}$ below in this section denote the constants that may change from line to line, where $C_{0}$ depends on all the allowed data, but $C$ does not depend on $A, B$ that we are yet to choose.} are certain constants to be chosen soon. Consider the function
\[V= e^{w}|\partial u|^{2}.\]
Suppose $V$ achieves maximum at the origin. Then near the origin, we can choose a  local $g$-unitary frame still denoted by $e_{1},\cdots,e_{n}$ such that $g_{i\bar{j}}=\delta_{ij}$ and the matrix $\big\{\tilde{g}_{i\bar{j}}\big\}$ is diagonal.


By the maximum principle, at the origin, it follows that
\begin{equation}\label{3.9}
\begin{split}
  0\geq \frac{\mathcal{L}(V)}{Bw e^{w}|\partial u|^{2}}=&\frac{\mathcal{L}(e^{w})}{Bwe^{w}}+\frac{\mathcal{L}(|\partial u|^{2})}{Bw|\partial u|^{2}}+2{G}^{i\bar{i}}\textrm{Re}\Big\{e_{i}({w})
  \frac{\bar{e}_{i}(|\partial u|^{2})}{Bw|\partial u|^{2}}\Big\}\\
  =&\mathcal{L}(\eta)+B(1+w){G}^{i\bar{i}}|\eta_{i}|^{2}+\frac{\mathcal{L}(|\partial u|^{2})}{Bw|\partial u|^{2}}\\
  &+\frac{2}{|\partial u|^{2}}\sum_{j}{G}^{i\bar{i}}\textrm{Re}\big\{e_{i}(\eta)\bar{e}_{i}e_{j}(u)\bar{e}_{j}(u)
    +e_{i}(\eta)\bar{e}_{i}\bar{e}_{j}(u)e_{j}(u)\big\}.
\end{split}
\end{equation}
By direct calculation,
\begin{equation}\label{Lpartaiu}
\mathcal{L}(|\partial u|^{2})={G}^{i\bar{i}}\Big({e_{i}e_{\bar{i}}}(|\partial u|^{2})-[e_{i},\bar{e}_{i}]^{0,1}(|\partial u|^{2})\Big)= I+II+III,	
\end{equation}
where
\[I={G}^{i\bar{i}}(e_{i}\bar{e}_{i}e_{j}u-[e_{i},\bar{e}_{i}]^{0,1}e_{j}u)\bar{e}_{j}u;\]
\[II={G}^{i\bar{i}}(e_{i}\bar{e}_{i}\bar{e}_{j}u-[e_{i},\bar{e}_{i}]^{0,1}\bar{e}_{j}u)e_{j}u;\]
\[III={G}^{i\bar{i}}(|e_{i}e_{j}u|^{2}+|e_{i}\bar{e}_{j}u|^{2}).\]
Differentiating (\ref{dp''}) once, since $g$ is almost Hermitian, we have
\[ {G}^{i\bar{i}}(e_{j}e_{i}\bar{e}_{i}u-e_{j}[e_{i},\bar{e}_{i}]^{0,1}u)=-\hat{h}_{j}.
\]
Note that
\[
\begin{split}
    &  {G}^{i\bar{i}}(e_{i}\bar{e}_{i}e_{j}u-[e_{i},\bar{e}_{i}]^{0,1}e_{j}u) \\
   = &  {G}^{i\bar{i}}(e_{j}e_{i}\bar{e}_{i}u+e_{i}[\bar{e}_{i},e_{j}]u
   +[e_{i},e_{j}]\bar{e}_{i}u-[e_{i},\bar{e}_{i}]^{0,1}e_{j}u)\\
   = & -\hat{h}_{j}+{G}^{i\bar{i}}e_{j}[e_{i},\bar{e}_{i}]^{0,1}u
   +{G}^{i\bar{i}}(e_{i}[\bar{e}_{i},e_{j}]u
   +[e_{i},e_{j}]\bar{e}_{i}u-[e_{i},\bar{e}_{i}]^{0,1}e_{j}u)\\
   =&-\hat{h}_{j}+ {G}^{i\bar{i}}\big\{e_{i}[\bar{e}_{i},e_{j}]u+\bar{e}_{i}[e_{i},e_{j}]u+[[e_{i},e_{j}],\bar{e}_{i}]u
   -[[e_{i},\bar{e}_{i}]^{0,1},e_{j}]u\big\}
.\end{split}
\]
We may and do assume $|\partial u|\gg 1$. Therefore,
\begin{equation}\label{I+II}
\begin{split}
I+II\geq& -2\sum_{j}\textrm{Re}\big\{\hat{h}_{j}u_{\bar{j}}\big\}-C|\partial u|\sum_{j}{G}^{i\bar{i}}(|e_{i}e_{j}u|+|e_{i}\bar{e}_{j}u|)-C|\partial u|^{2} \mathcal{G}\\
   \geq &-2\sum_{j}\textrm{Re}\big\{\hat{h}_{j}u_{\bar{j}}\big\}-\frac{C}{\varepsilon}|\partial u|^{2}\mathcal{G}-\ve\sum_{j} {G}^{i\bar{i}}(|e_{i}e_{j}u|^{2}+|e_{i}\bar{e}_{j}u|^{2}).
\end{split}
\end{equation}
Plugging \eqref{I+II} into \eqref{Lpartaiu},
\begin{equation}\label{3.7}
\begin{split}
\frac{\mathcal{L}(|\partial u|^{2})}{Bw|\partial u|^{2}}\geq& -\frac{2\sum_{j}\textrm{Re}\big\{\hat{h}_{j}u_{\bar{j}}\big\}}{Bw|\partial u|^{2}}+ (1-\varepsilon) \sum_{j}{G}^{i\bar{i}}\frac{|e_{i}e_{j}u|^{2}+|e_{i}\bar{e}_{j}u|^{2}}{Bw|\partial u|^{2}}- \frac{C\mathcal{G}}{Bw\varepsilon} .
\end{split}
\end{equation}
Now we estimate the last term of (\ref{3.9}). On the one hand, using Cauchy-Schwarz (we will use C-S for convenience) inequality, for $0<\ve\leq \frac{1}{2}$ we have
\[
\begin{split}
&2\sum_{j}{G}^{i\bar{i}}\textrm{Re}\big\{e_{i}(\eta)
\bar{e}_{i}e_{j}(u)\bar{e}_{j}(u)\big\}\\
=& 2\sum_{j}{G}^{i\bar{i}}\textrm{Re}\big\{\eta_{i}u_{\bar{i}}\big\{e_{j}\bar{e}_{i}(u)-[e_{j},\bar{e}_{i}]^{0,1}(u)
-[e_{j},\bar{e}_{i}]^{1,0}(u)\big\}\big\}\\
=&  2{G}^{i\bar{i}}(\tilde{g}_{i\bar{i}}-1)\textrm{Re}\big\{\eta_{i}u_{\bar{i}}\big\}-2\sum_{j}{G}^{i\bar{i}}\textrm{Re}\big\{e_{i}(\eta)
\bar{e}_{j}(u)[e_{j},\bar{e}_{i}]^{1,0}(u)\big\}\\
\geq&  2{G}^{i\bar{i}}\tilde{g}_{i\bar{i}}\textrm{Re}\{\eta_{i}u_{\bar{i}}\}-\varepsilon Bw|\partial u|^{2}{G}^{i\bar{i}}|\eta_{i}|^{2}
-\frac{C}{Bw\varepsilon}|\partial u|^{2}\mathcal{G}.
\end{split}
\]
On the other hand, since for $0<\varepsilon\leq\frac{1}{2}$, then $1\leq (1-\varepsilon)(1+2\varepsilon)$. By using the C-S inequality again we achieve
\[
\begin{split}
   &2\sum_{j}{G}^{i\bar{i}}\textrm{Re}\big\{e_{i}(\eta)\bar{e}_{i}\bar{e}_{j}(u)e_{j}(u)\big\} \\
\geq & -\frac{(1-\varepsilon)}{Bw}\sum_{j}{G}^{i\bar{i}}|\bar{e}_{i}\bar{e}_{j}(u)|^{2}
-(1+2\varepsilon)Bw|\partial u|^{2}{G}^{i\bar{i}}|\eta_{i}|^{2}.
\end{split}
\]
Therefore,
\begin{equation}\label{3.12}
\begin{split}
   2{G}^{i\bar{i}}\textrm{Re}\Big\{e_{i}({w})\frac{\bar{e}_{i}(|\partial u|^{2})}{Bw|\partial u|^{2}}\Big\}
    \geq &2{G}^{i\bar{i}}\tilde{g}_{i\bar{i}}\frac{\textrm{Re}\{\eta_{i}u_{\bar{i}}\}}{|\partial u|^{2}}-(1+3\ve)Bw{G}^{i\bar{i}}|\eta_{i}|^{2}
\\&-\frac{C}{Bw\varepsilon}\mathcal{G}
-(1-\varepsilon)\sum_{j}{G}^{i\bar{i}}\frac{|\bar{e}_{i}\bar{e}_{j}(u)|^{2}}{Bw|\partial u|^{2}}.
\end{split}
\end{equation}
Combining  (\ref{3.9}), (\ref{3.7})-(\ref{3.12}), we obtain
\[
\begin{split}
   0 \geq & \mathcal{L}(\eta)+B(1+w){G}^{i\bar{i}}|\eta_{i}|^{2}-\frac{2C}{Bw\varepsilon}\mathcal{G}
 -\frac{2\sum_{j}\textrm{Re}\{\hat{h}_{j}u_{\bar{j}}\}}{Bw|\partial u|^{2}}\\
    &+2{G}^{i\bar{i}}\tilde{g}_{i\bar{i}}\frac{\textrm{Re}\{\eta_{i}u_{\bar{i}}\}}{|\partial u|^{2}}-(1+3\ve)Bw{G}^{i\bar{i}}{|\eta_{i}|^{2}}\\
\geq &  \mathcal{L}(\eta)+B(1-3\ve w) {G}^{i\bar{i}}|\eta_{i}|^{2}-\frac{2C}{Bw\varepsilon}\mathcal{G}
-\frac{C}{Bw|\partial u|}
 +2{G}^{i\bar{i}}\tilde{g}_{i\bar{i}}\frac{\textrm{Re}\{\eta_{i}u_{\bar{i}}\}}{|\partial u|^{2}}.
\end{split}
\]
Hence, choose $\ve=\frac{1}{6w(0)}$ ($\leq \frac{1}{2}$ if $A$ is large enough), this gives us
\begin{equation}\label{4..16}
\begin{split}
&\mathcal{L}(\eta)+ {G}^{i\bar{i}}\tilde{g}_{i\bar{i}}\frac{2\textrm{Re}\{\eta_{i}u_{\bar{i}}\}}{|\partial u|^{2}}
+\frac{B}{2}
{G}^{i\bar{i}}|\eta_{i}|^{2}
\leq \frac{C}{Bw|\partial u|}+ \frac{C}{B}\mathcal{G}.
\end{split}
\end{equation} 
~\\
\textbf{Case 1.} If $|\lambda_{*}(\tilde{g}_{i\bar{j}})|\geq R$ for some positive constant \textit{R} large as in Corollary \ref{Cor2.6}.
~\\
(a). First, suppose (\ref{2..25}) holds, we divide the proof into two parts.
~\\
(a-1). Assume ${G}^{j\bar{j}}\geq D$ for some $j$, where $D>0$ is a constant to be chosen soon. Therefore,
\[
\mathcal{L}(\eta)\geq\theta\mathcal{G}\geq \frac{D\theta}{2}+\frac{\theta}{2}\mathcal{G}.
\]
Now we may assume $|\partial u|\geq |\partial\underline{u}|$, then
\[
|\partial \eta|\leq |\partial (\underline{u}-u)|\leq 2|\partial u|.
\]
Hence
\begin{equation}\label{4..21}
{G}^{i\bar{i}}\tilde{g}_{i\bar{i}}\frac{2\textrm{Re}\{\eta_{i}u_{\bar{i}}\}}{|\partial u|^{2}}\geq -4{G}^{i\bar{i}}\tilde{g}_{i\bar{i}}=-4m\hat{h}.
\end{equation}
Plugging it into (\ref{4..16}) gives us
\begin{equation*}\label{}
\frac{D\theta}{2}-4m\hat{h}+\Big(\frac{\theta}{2}-\frac{C}{B}\Big)\mathcal{G}\leq  \frac{C}{Bw|\partial u|}.
\end{equation*}
~\\
Choosing $B, D$ sufficiently large such that the third term  above can be cancelled and $\frac{D\theta}{2}\geq 1+4m\sup_{M}\hat{h}$. Therefore, $|\partial u|\leq C_{0}$.
~\\
(a-2). Assume ${G}^{j\bar{j}}\leq D$ for each $j$. We may assume $|\partial u|\geq \max\{1,|\partial\underline{u}|\}$ and use the C-S inequality to obtain
\[
\begin{split}
{G}^{i\bar{i}}\tilde{g}_{i\bar{i}}\frac{2\textrm{Re}\{\eta_{i}u_{\bar{i}}\}}{|\partial u|^{2}} \geq    -\frac{B}{4}{G}^{i\bar{i}}|\eta_{i}|^{2}
-\frac{4}{B|\partial u|^{2}}{G}^{i\bar{i}}\tilde{g}_{i\bar{i}}^{2}.
\end{split}
\]
It follows that
\[\frac{1}{2}\theta\Theta
+\frac{\theta}{2}\mathcal{G}\leq  \frac{C}{Bw|\partial u|}+ \frac{C}{B}\mathcal{G}+\frac{4}{B|\partial u|^{2}}{G}^{i\bar{i}}\tilde{g}_{i\bar{i}}^{2}. \]
For the choice of $B$ as before, we have
\[
\frac{1}{2}\theta\Theta\leq \frac{C}{Bw|\partial u|}+\frac{4}{B|\partial u|^{2}}{G}^{i\bar{i}}\tilde{g}_{i\bar{i}}^{2}.
\]
It suffices to prove for each $i$,
\begin{equation}\label{4.9}
  \sigma_{m-1,i}(\lambda^{*}(\tilde{g}^{i\bar{j}}))={G}^{i\bar{i}}\tilde{g}_{i\bar{i}}^{2}\leq C.
\end{equation}
We may assume $\tilde{g}^{1\bar{1}}\geq \cdots\geq \tilde{g}^{n\bar{n}}$ at the origin. Hence
\[
\prod_{i=1}^{m}\tilde{g}^{i\bar{i}}\geq \frac{\sigma_{m}(\lambda^{*}(\tilde{g}^{i\bar{j}}))}{{n\choose m}}=\psi^{-1}.
\]
Therefore,
\begin{equation}\label{3.13}
\psi^{-1}\tilde{g}^{1\bar{1}}\leq (\tilde{g}^{1\bar{1}})^{2}\prod_{i=2}^{m}\tilde{g}^{i\bar{i}}\leq \sigma_{m-1;1}\cdot (\tilde{g}^{1\bar{1}})^{2}={G}^{1\bar{1}}\leq D.
\end{equation}
It follows maximality of $\tilde{g}^{1\bar{1}}$ among all the $\{\tilde{g}^{i\bar{i}}\}$, we have
\[
\sigma_{m-1;i}(\lambda^{*}(\tilde{g}^{i\bar{j}}))\leq C(\tilde{g}^{1\bar{1}})^{m-1}\leq C(D\psi)^{m-1}\leq C_{0}
\]
and this proves (\ref{4.9}).
~\\
(b). Second, suppose (\ref{2..26}) holds, then we have
\begin{equation}\label{4...15}
{G}^{k\bar{k}}\geq \theta\mathcal{G}\geq \theta\Theta, ~ \textrm{for each}~k.
\end{equation}
Using (\ref{2.12}) gives us
\begin{equation}\label{4.31}
\mathcal{L}(\eta)={G}^{i\bar{i}}(\bar{g}_{i\bar{i}}-\tilde{g}_{i\bar{i}})\geq \tau\mathcal{G}-{G}^{i\bar{i}}\tilde{g}_{i\bar{i}}.
\end{equation}
Plugging (\ref{4..21}) and (\ref{4.31}) into (\ref{4..16}) implies
\[(\tau-\frac{C}{B}) \mathcal{G}+\frac{B}{2}
{G}^{i\bar{i}}|\eta_{i}|^{2}\leq 5{G}^{i\bar{i}}\tilde{g}_{i\bar{i}}+\frac{C}{Bw|\partial \eta|}
  \leq   C+\frac{C}{Bw|\partial \eta|}.\]
Choosing $B$ large enough once again such that $B\tau\geq C$, so the first term on the left hand can be cancelled. Hence by (\ref{4...15}) we have
\[
\frac{B\theta\Theta}{2}|\partial\eta|^{2}\leq
C+\frac{C}{Bw|\partial \eta|},
\]
which gives an upper bound for $|\partial \eta|$ and hence $|\partial u|\leq C_{0}$.
~\\
\textbf{Case 2: }If $|\lambda_{*}(\tilde{g}_{i\bar{j}})|\leq R$, then we have $\tilde{g}^{k\bar{k}}\geq (CR)^{-1}$ for each $k$.
Therefore,
\[
{G}^{i\bar{i}}|\eta_{i}|^{2}\geq \frac{|\partial \eta|^{2}}{CR^{m+1}}.
\]
Plugging into (\ref{4..16}) it follows
\[
(\tau-\frac{C}{B}) \mathcal{G}+
\frac{B|\partial \eta|^{2}}{CR^{m+1}}\leq \frac{C}{Bw|\partial \eta|}+5m\hat{h}
\]
since $|\partial u|\geq \max\{1,|\partial \underline{u}|\}$. Then by the choice of $B$ as before we have
\[
\frac{B|\partial \eta|^{2}}{CR^{m+1}}\leq \frac{C}{Bw|\partial \eta|}+5m\sup_{M}\hat{h}.
\]
This implies $|\partial\eta|\leq C_{0}$ and hence
$
|\partial u|\leq |\partial \eta|+|\partial\underline{u}|\leq C_{0}.$\qed

\section{Second order estimate}
\begin{theorem}\label{Thm4.1}
Let $u$ (resp. $\underline{u}$) be the solution (resp. $\mathcal{C}$-subsolution) for (\ref{dp}) with $\sup_{M}(u-\underline{u})=0$. Then there exists a constant $C_{0}>0$ depends on the allowed data (including $\|\underline{u}\|_{C^{2}(M)}$ and $\|u\|_{C^{1}(M)}$) such that
\begin{equation}\label{aa}
\|\nabla^{2}u\|_{C^{0}(M)}\leq C_{0}.
\end{equation}
\end{theorem}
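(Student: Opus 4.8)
The plan is to run the real-Hessian maximum-principle argument of Chu--Tosatti--Weinkove \cite{CTW}, adapted to the inverse Hessian operator ${G}$ of \eqref{dp''}, feeding in the $\mathcal{C}$-subsolution (through Corollary \ref{Cor2.6}) in place of the Laplacian inequality that is available for the genuine \MA~ equation but not here. It suffices to bound $\sup_{M}\lambda_{1}$, where $\lambda_{1}$ denotes the largest eigenvalue of the real Hessian $\nabla^{2}u$ with respect to $g$: since $\chi_{u}>0$ gives $\Delta_{g}u\geq -C$, an upper bound for $\lambda_{1}$ forces a two-sided bound on $\nabla^{2}u$; and, as in \cite{CTW}, one works with the \emph{real} Hessian because a direct maximum-principle estimate for $\lambda_{\max}(\tilde{g}_{i\bar{j}})$ is obstructed by torsion. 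I would consider a test function
\[
Q = \log\lambda_{1} + \varphi(|\partial u|^{2}) + \phi(u-\underline{u}),
\]
with $\varphi$ the standard auxiliary function of the (already bounded) gradient term and $\phi$ a convex, decreasing function of the (bounded) quantity $u-\underline{u}$, the constants in $\varphi,\phi$ fixed large only at the end. Suppose $Q$ attains its maximum at $x_{0}$. Since $\lambda_{1}$ can fail to be smooth when the top eigenvalue of $\nabla^{2}u(x_{0})$ is degenerate, I replace it near $x_{0}$ by a smooth $\hat{\lambda}_{1}\geq\lambda_{1}$ equal to $\lambda_{1}$ at $x_{0}$, obtained by a fixed symmetric perturbation of $\nabla^{2}u$ supported on the lower eigenspaces; then $\hat{Q}:=\log\hat{\lambda}_{1}+\varphi(|\partial u|^{2})+\phi(u-\underline{u})$ also has an interior maximum at $x_{0}$, so $e_{i}(\hat{Q})=0$ and $\mathcal{L}(\hat{Q})\leq 0$ there. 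Here I work in a real $g$-orthonormal frame $\{E_{1},\ldots,E_{2n}\}$ at $x_{0}$ diagonalizing $\nabla^{2}u(x_{0})$, so that $\lambda_{1}=(\nabla^{2}u)_{11}$ and $\lambda_{1}\geq\lambda_{2}\geq\cdots\geq\lambda_{2n}$.

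The heart of the matter is a lower bound for $\mathcal{L}(\hat{Q})$ at $x_{0}$. Writing $\mathcal{L}(\log\hat{\lambda}_{1})=\hat{\lambda}_{1}^{-1}\mathcal{L}(\hat{\lambda}_{1})-\hat{\lambda}_{1}^{-2}\sum_{i}{G}^{i\bar{i}}|e_{i}(\hat{\lambda}_{1})|^{2}$ and expanding $\mathcal{L}(\hat{\lambda}_{1})$ through the first and second derivatives of the eigenvalue function, the second-derivative (convexity of the largest eigenvalue) contribution is a nonnegative term of the shape $\sum_{i}{G}^{i\bar{i}}\sum_{\alpha\geq 2}\frac{|e_{i}(\nabla^{2}u)_{1\alpha}|^{2}}{\lambda_{1}-\lambda_{\alpha}}$, which I keep, while the first-derivative contribution forces me to commute the second covariant derivative $(\nabla^{2}u)_{11}$ past $e_{i}\bar{e}_{i}-[e_{i},\bar{e}_{i}]^{0,1}$. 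On an almost Hermitian manifold this generates curvature/torsion/Nijenhuis terms of order $\leq 2$ in $u$ (harmless, absorbed into $C\mathcal{G}+C$ via the $C^{1}$ bound) together with genuine third-order terms of $u$, which I rewrite by differentiating \eqref{dp''} twice along $E_{1}$. Here it is crucial \emph{not} to discard the second-order term of that expansion:
\[
\sum_{i,j}{G}^{i\bar{j}}\,\tilde{g}_{i\bar{j},11}=-\hat{h}_{,11}-{G}^{i\bar{k},j\bar{l}}\,\tilde{g}_{i\bar{k},1}\,\tilde{g}_{j\bar{l},1},
\]
and since $f$ is concave (Lemma \ref{Lma2.1}), ${G}=-F^{-m}$ is concave, so $-{G}^{i\bar{k},j\bar{l}}\tilde{g}_{i\bar{k},1}\tilde{g}_{j\bar{l},1}\geq 0$; I retain this good term. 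Absorbing the indefinite third-order cross terms into it, into the eigenvalue-convexity term, and into the gradient-term contributions by Cauchy--Schwarz --- the delicate bookkeeping of \cite{CTW}, now carrying the extra matrix indices of a Hessian-type operator --- leaves, after using $e_{i}(\hat{Q})=0$ to eliminate $\hat{\lambda}_{1}^{-2}\sum_{i}{G}^{i\bar{i}}|e_{i}(\hat{\lambda}_{1})|^{2}$,
\[
0\ \geq\ \mathcal{L}(\hat{Q})\ \geq\ \phi'\,\mathcal{L}(u-\underline{u})+\phi''\sum_{i}{G}^{i\bar{i}}|e_{i}(u-\underline{u})|^{2}-C\mathcal{G}-C
\]
at $x_{0}$, with $C$ independent of the constants in $\phi$.

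To close, I would use the dichotomy of Corollary \ref{Cor2.6}, exactly as in the first-order estimate. If $|\lambda_{*}(\tilde{g}_{i\bar{j}})|\leq R$ then $\mathcal{L}$ is uniformly elliptic and the retained good terms bound $\lambda_{1}(x_{0})$ directly. If $|\lambda_{*}(\tilde{g}_{i\bar{j}})|>R$: in alternative \eqref{2..25}, $\mathcal{L}(u-\underline{u})=-\mathcal{L}(\underline{u}-u)\leq-\theta\mathcal{G}$, so choosing $\phi$ with $|\phi'|$ large enough relative to $C,\theta,\Theta$ makes $\phi'\mathcal{L}(u-\underline{u})\geq\tfrac12\theta|\phi'|\mathcal{G}$ overwhelm $C\mathcal{G}+C$ --- using $\mathcal{G}\geq\Theta>0$ from \eqref{2..27} and $\phi''\geq0$ --- which contradicts $\mathcal{L}(\hat{Q})\leq0$, so the maximum of $Q$ cannot occur here; in alternative \eqref{2..26}, ${G}^{k\bar{k}}\geq\theta\mathcal{G}\geq\theta\Theta>0$ for every $k$, giving genuine uniform ellipticity, and a combinatorial estimate for ${G}^{k\bar{k}}=\sigma_{m-1;k}(\lambda^{*}(\tilde{g}^{i\bar{j}}))(\tilde{g}^{k\bar{k}})^{2}$ in the style of \eqref{3.13}--\eqref{4.9}, together with the retained good terms, bounds $\lambda_{1}(x_{0})$. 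In every surviving case $\lambda_{1}(x_{0})\leq C_{0}$, hence $\sup_{M}\lambda_{1}\leq C_{0}$, which gives \eqref{aa}. I expect the decisive obstacle to be precisely the retained-concavity Cauchy--Schwarz step flagged above: on an almost complex manifold the third-order derivatives of $u$ produced by commuting the real Hessian do not match cleanly with the squared quantities $\tilde{g}_{i\bar{k},1}$ in $-{G}^{i\bar{k},j\bar{l}}\tilde{g}_{i\bar{k},1}\tilde{g}_{j\bar{l},1}$ (which involve only the $(1,1)$-part and carry torsion corrections), so one must track precisely which third-order terms arise, with which indices, and pair them through the commutation/torsion identities so that every negative third-order contribution is genuinely absorbed rather than merely crudely bounded.
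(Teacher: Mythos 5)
Your overall strategy is the same as the paper's: bound $\lambda_{1}(\nabla^{2}u)$ via a CTW-style test function combining $\log\lambda_{1}$ (suitably perturbed), a gradient term, and a function of $\underline{u}-u$, keep the concavity term $-{G}^{i\bar{k},j\bar{l}}V_{1}(\tilde{g}_{i\bar{k}})V_{1}(\tilde{g}_{j\bar{l}})\geq 0$, and close with the $\mathcal{C}$-subsolution dichotomy of Corollary \ref{Cor2.6}. But there is a genuine gap at exactly the point you flag: the displayed inequality $0\geq \mathcal{L}(\hat{Q})\geq \phi'\mathcal{L}(u-\underline{u})+\phi''\sum_{i}{G}^{i\bar{i}}|e_{i}(u-\underline{u})|^{2}-C\mathcal{G}-C$ with $C$ \emph{independent of the constants in} $\phi$ is asserted, not derived, and it is not obtainable by the crude Cauchy--Schwarz absorption you describe. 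The obstruction is the term $-(1+\ve)\sum_{i}{G}^{i\bar{i}}|e_{i}(\lambda_{1})|^{2}/\lambda_{1}^{2}$: after substituting the critical equation (the analogue of \eqref{5..10}), it produces a contribution of size $2\,(\text{slope of the }\underline{u}-u\text{ term})^{2}\sum_{i}{G}^{i\bar{i}}|e_{i}(\underline{u}-u)|^{2}$, which exceeds the available good term coming from the second derivative of that same test-function component (for the paper's choice $e^{B\widetilde{\eta}}$ the mismatch is a full factor $e^{B\widetilde{\eta}}$, since $\varphi''=B^{2}e^{B\widetilde{\eta}}$ while $2(\varphi')^{2}=2B^{2}e^{2B\widetilde{\eta}}$), so it cannot simply be ``absorbed'' and then dominated later by choosing the slope large --- the bad term grows at the same rate as the term you want to win with. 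In addition, the non-integrable commutators generate third-order terms $[V_{1},e_{i}]V_{1}\bar{e}_{i}(u)$ that are not $O(\lambda_{1})$ and must be traded against the convexity and $|e_{i}(\lambda_{1})|^{2}$ terms with explicit $\ve$-weights (the paper's Claim 2), which is why the coefficients $(2-\ve)$ and $(1+\ve)$ matter and why nothing cancels for free.

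What actually closes the argument in the paper is a further structure-specific analysis that your proposal omits: a case decomposition into the regimes \eqref{4.12}, \eqref{5.15}, and the complementary Case 2 with the index set $I=\{i:\,{G}^{n\bar{n}}\geq B^{3}e^{2B\widetilde{\eta}}{G}^{i\bar{i}}\}$; the expansion \eqref{4.26} of $e_{i}(\lambda_{1})$ in terms of $V_{1}(\tilde{g}_{i\bar{k}})$ and $e_{i}(u_{V_{\alpha}V_{1}})$ through the auxiliary vectors $\widetilde{e}_{1},JV_{1}$ and the coefficient bound of Lemma \ref{coeff}; the pointwise lower bound \eqref{5..25}--\eqref{5..27} for $-{G}^{i\bar{k},j\bar{l}}V_{1}(\tilde{g}_{i\bar{k}})V_{1}(\tilde{g}_{j\bar{l}})$, which uses inequalities special to the inverse Hessian operator $\sigma_{m}(\lambda^{*})$ (from Guan--Sun and Guan--Li--Zhang), not just abstract concavity of $f$; and finally the two ``Conditions'' comparing $\lambda_{1}+\sum_{\alpha>1}\lambda_{\alpha}\mu_{\alpha}^{2}$ with $\tilde{g}_{\tilde{1}\bar{\tilde{1}}}$, with the precise choice of $\gamma$ that makes the first/last and second/fourth terms in Lemma \ref{key inequality} cancel (Lemma \ref{Lma5.6}). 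Without this machinery your intermediate display does not follow, and the subsequent dichotomy step has nothing rigorous to act on; so the proposal is a correct plan with the decisive estimate left unproved rather than a proof.
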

Analogous to the arguments in \cite{CTW}, it suffices to bound the largest eigenvalue $\lambda_{1}(\nabla^{2}u)$ of the \textit{real} Hessian $\nabla^{2}u$ with respect to $g$ from above. First, we consider the test function
\[
\mathcal{Q}= \log \lambda_{1}(\nabla^{2}u)+\phi(|\partial u|^{2})+\varphi(\widetilde{\eta})
\]
on $\Omega=\{\lambda_{1}(\nabla^{2}u)>0\}\subset M$. Here $\varphi$ is a function defined by
\[
\varphi(\widetilde{\eta})= e^{B\widetilde{\eta}},
~~\widetilde{\eta}=\underline{u}-u+\sup_{M}(u-\underline{u})+1
\]
for a real constant $B>0$ to be determined later, and $\phi$ is defined by
\[
\phi(s)=-\frac{1}{2}\log(1+\sup_{M}|\partial u|^{2}-s).
\]
Set $K=1+\sup_{M}|\partial u|^{2}$. Note that
\[
   \frac{1}{2K}\leq \phi'(|\partial u|^{2})\leq \frac{1}{2},~~\phi''=2(\phi')^{2}.
\]
We may assume $\Omega$ is a nonempty (relative) open set (otherwise we are done). When $z$ approaches to $\partial{\Omega}$, then $\mathcal{Q}(z)\ri -\infty$.
Suppose $\mathcal{Q}$ achieves a maximum at the origin in $\Omega$ (after a translation), we can choose a proper local frame $x^{1}, \cdots, x^{2n}$ as in \cite{CTW} such that
\begin{equation}\label{5..6}
g_{i\bar{j}}=\delta_{i\bar{j}},~ \partial_{
}g_{\alpha\beta}=0~ \textrm{and the matrix}~ \big\{\tilde{g}_{i\bar{j}}\big\} ~\textrm{is diagonal at the origin}.
\end{equation}
We may further assume at the origin, $\tilde{g}_{1\bar{1}}\geq \cdots\geq \tilde{g}_{n\bar{n}}.$

Consider the perturbation $\Phi$ for $\nabla^{2}u$
 defined by
$
\Phi_{\beta}^{\alpha}=\sum_{\gamma}g^{\alpha\gamma}\{\nabla_{\gamma\beta}^{2}u-S_{\gamma\beta}\}$
for some smooth section $S$ on $T^{*}M\otimes T^{*}M$ such that
$\lambda_{1}(\Phi)\leq \lambda_{1}(\nabla^{2}u)~ \textrm{on} ~\Omega$
with equality only at the origin, but also $\lambda_{1}(\Phi)\in C^{2}(\Omega)$ (see e.g. \cite{CTW,Gab}). Let $V_{1},\cdots, V_{2n}$ be the eigenvectors for $\Phi$ at the origin with eigenvalues $\lambda_{1}(\Phi),\cdots, \lambda_{2n}(\Phi)$ respectively.  We (locally) define the test function
\[
Q=\log(\lambda_{1}(\Phi))+\phi(|\partial u|^{2})+\varphi(\widetilde{\eta}).
\]
For simplicity, we denote $\lambda_{\beta}=\lambda_{\beta}(\Phi)$.

In what follows, we will use the Einstein summation convention, and all the following calculations are done at the origin. The $C, C_{0}$ below in this section denotes positive constants those may change from line to line, where $C_{0}$ depends all the allowed data, but $C$ does not depend on $B$ that we are yet to choose.

By the maximum principle, at the origin, for each $i=1,2,\cdots,n$, we have
\begin{equation}\label{5..10}
\frac{1}{\lambda_{1}}e_{i}(\lambda_{1})=-\phi'e_{i}(|\partial u|^{2})-Be^{B\widetilde{\eta}}e_{i}(\widetilde{\eta}),
\end{equation}
\begin{equation}\label{}
\begin{split}
0\geq \mathcal{L}(Q)=& \frac{\mathcal{L}(\lambda_{1})}{\lambda_{1}}- {G}^{i\bar{i}}\frac{|e_{i}(\lambda_{1})|^{2}}{\lambda_{1}^{2}}  +\phi'' {G}^{i\bar{i}}|e_{i}(|\partial u|^{2})|^{2} \\
      &+\phi'\mathcal{L}(|\partial u|^{2})+Be^{B\widetilde{\eta}}\mathcal{L}(\widetilde{\eta})+B^{2}e^{B\widetilde{\eta}} {G}^{i\bar{i}}|e_{i}(\widetilde{\eta})|^{2}.
\end{split}
\end{equation}
In this section we may and do assume $|\lambda_{*}(\tilde{g}_{i\bar{j}})|\geq R$ for some positive constant $R$ such that Corollary \ref{Cor2.6} holds true (otherwise we are done).
\subsection{Lower bound for $\mathcal{L}(Q)$}
The main result of this subsection is the following lemma.
\begin{lemma}\label{Lma4.2}
For each $\ve\in (0,\frac{1}{2}]$, at the origin, we have
\begin{equation}\label{4.7"}
\begin{split}
\mathcal{L}(Q) \geq & (2-\ve)\sum_{\alpha>1}{G}^{i\bar{i}}\frac{|e_{i}(u_{V_{\alpha}V_{1}})|^{2}}{\lambda_{1}(\lambda_{1}-\lambda_{\alpha})}
-\frac{1}{\lambda_{1}} {G}^{i\bar{k},j\bar{l}}V_{1}(\tilde{g}_{i\bar{k}})V_{1}(\tilde{g}_{j\bar{l}})\\&
-(1+\ve){G}^{i\bar{i}}\frac{|e_{i}(\lambda_{1})|^{2}}{\lambda_{1}^{2}}
-\frac{C}{\ve}\mathcal{G}
+ \frac{\phi'}{2}\sum_{j} {G}^{i\bar{i}}(|e_{i}e_{j}u|^{2}+|e_{i}\bar{e}_{j}u|^{2})\\&+\phi'' {G}^{i\bar{i}}|e_{i}(|\partial u|^{2})|^{2}+Be^{B\widetilde{\eta}}\mathcal{L}(\widetilde{\eta})+B^{2}e^{B\widetilde{\eta}} {G}^{i\bar{i}}|e_{i}(\widetilde{\eta})|^{2}.\\
\end{split}
\end{equation}
\end{lemma}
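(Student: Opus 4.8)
The plan is to start from the maximum-principle inequality $0 \geq \mathcal{L}(Q)$, expand $\mathcal{L}(\log \lambda_1(\Phi))$ carefully, and keep track of all third-order terms so that the "bad" negative ones can later be absorbed. The first step is to record the first- and second-order variation formulas for the largest eigenvalue $\lambda_1(\Phi)$: differentiating $\Phi$ once gives $e_i(\lambda_1) = V_1(\Phi^{V_1}_{V_1})$-type expressions, and differentiating twice produces the standard Weinberger-type second-order formula
\[
\mathcal{L}(\lambda_1) \geq \sum_{\alpha>1}\frac{2\,G^{i\bar i}|e_i(u_{V_\alpha V_1})|^2}{\lambda_1-\lambda_\alpha} + G^{i\bar i}\big(e_i\bar e_i - [e_i,\bar e_i]^{0,1}\big)\big(\nabla^2 u\big)_{V_1 V_1} + (\text{lower order}),
\]
where the eigenvalue-gap terms $\sum_{\alpha>1}\frac{2 G^{i\bar i}|e_i(u_{V_\alpha V_1})|^2}{\lambda_1-\lambda_\alpha}$ are genuinely favorable (they come with a $+$ sign since $\lambda_1 \geq \lambda_\alpha$). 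Dividing by $\lambda_1$ produces the first displayed term of the lemma after using $\lambda_1 - \lambda_\alpha = (\lambda_1-\lambda_\alpha)$ and the factor $(2-\ve)$ will come from the slight loss incurred when commuting derivatives and switching between $\nabla^2 u$ and $\tilde g$.

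Next I would differentiate the equation \eqref{dp''} twice in the direction $V_1$. Differentiating $G(\tilde g_{i\bar j}) = -\hat h$ once gives $G^{i\bar j}V_1(\tilde g_{i\bar j}) = -V_1(\hat h)$, and twice gives
\[
G^{i\bar j}V_1 V_1(\tilde g_{i\bar j}) + G^{i\bar k, j\bar l}V_1(\tilde g_{i\bar k})V_1(\tilde g_{j\bar l}) = -V_1 V_1(\hat h).
\]
Here is where the term $-\frac{1}{\lambda_1}G^{i\bar k,j\bar l}V_1(\tilde g_{i\bar k})V_1(\tilde g_{j\bar l})$ enters: by concavity of $G$ (equivalently of $F$, via Lemma \ref{Lma2.1}) this quantity is nonnegative, so it is kept rather than discarded — it is exactly the reserve that will be used in the next subsection to beat the negative third-order junk. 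The key bookkeeping point is to relate $V_1 V_1(\tilde g_{i\bar j})$ to $(e_i \bar e_i - [e_i,\bar e_i]^{0,1})(\nabla^2 u)_{V_1 V_1}$ by commuting covariant derivatives on the almost Hermitian manifold; each commutation costs torsion/curvature terms which are of order $\leq 2$ in $u$ and hence bounded by $C(1 + |\nabla^2 u|)\mathcal{G} \leq C\lambda_1 \mathcal{G}$ (using $|\nabla^2 u| \lesssim \lambda_1$ on $\Omega$), contributing to the $-\frac{C}{\ve}\mathcal{G}$ term after dividing by $\lambda_1$. The mixed third-order terms $G^{i\bar i}\mathrm{Re}\{\cdots e_i(u_{V_\alpha V_1}) \cdots\}$ arising from commutators are handled by Cauchy–Schwarz against the favorable gap terms, which is why the coefficient there degrades from $2$ to $2-\ve$ and a compensating $-\frac{C}{\ve}\mathcal{G}$ appears; one also peels off $-(1+\ve)G^{i\bar i}|e_i(\lambda_1)|^2/\lambda_1^2$ to match the existing negative eigenvalue-gradient term coming from $\mathcal{L}(\log \lambda_1) = \mathcal{L}(\lambda_1)/\lambda_1 - G^{i\bar i}|e_i(\lambda_1)|^2/\lambda_1^2$, plus the $\ve$-slack absorbed in $-\frac{C}{\ve}\mathcal{G}$.

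Finally I would treat the term $\phi' \mathcal{L}(|\partial u|^2)$: by the same computation as in Section 4 (see \eqref{Lpartaiu}–\eqref{I+II}), $\mathcal{L}(|\partial u|^2) \geq \sum_j G^{i\bar i}(|e_ie_ju|^2 + |e_i\bar e_ju|^2) - C|\partial u|^2 \mathcal{G} - 2\sum_j \mathrm{Re}\{\hat h_j u_{\bar j}\}$; multiplying by $\phi' \in [\tfrac{1}{2K}, \tfrac12]$ and throwing the constant-order pieces into $-\frac{C}{\ve}\mathcal{G}$ (noting $\hat h$ and $|\partial u|$ are already controlled) yields the term $\frac{\phi'}{2}\sum_j G^{i\bar i}(|e_ie_ju|^2 + |e_i\bar e_ju|^2)$ — retaining a factor $\tfrac12$ rather than $1$ so that the other half can absorb the $|e_ie_ju|^2$-type error from the eigenvalue expansion. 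The remaining terms $\phi'' G^{i\bar i}|e_i(|\partial u|^2)|^2$, $Be^{B\widetilde\eta}\mathcal{L}(\widetilde\eta)$, and $B^2 e^{B\widetilde\eta} G^{i\bar i}|e_i(\widetilde\eta)|^2$ are simply carried over unchanged from the definition of $Q$. The main obstacle is the careful commutator analysis in the almost Hermitian setting: unlike the Kähler or even Hermitian case, $e_i\bar e_i(\nabla^2 u)_{V_1V_1}$ differs from $V_1V_1(\tilde g_{i\bar i})$ by genuinely noncommutative torsion terms involving $[e_i,\bar e_j]^{0,1}$ and its derivatives, and one must verify that after all rearrangements every uncontrolled third-order term is either (i) one of the favorable gap terms, (ii) absorbable into $\mathcal{G}$ at the cost of $\ve^{-1}$, or (iii) dominated by the reserved $-G^{i\bar k,j\bar l}$ concavity term — the precise matching of these, especially ensuring no net loss of more than $\ve$ in the leading coefficients, is the delicate part.
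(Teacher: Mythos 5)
Your proposal follows essentially the same route as the paper: expand $\mathcal{L}(Q)$ at the maximum, use the second-order variation formula for $\lambda_{1}(\Phi)$ with the positive eigenvalue-gap terms, differentiate the equation \eqref{dp''} twice along $V_{1}$ to retain the concavity term $-\frac{1}{\lambda_{1}}G^{i\bar{k},j\bar{l}}V_{1}(\tilde{g}_{i\bar{k}})V_{1}(\tilde{g}_{j\bar{l}})$ (the paper's Claim 1), absorb the leftover commutator third-order terms by Cauchy--Schwarz against the gap terms and $G^{i\bar{i}}|e_{i}(\lambda_{1})|^{2}/\lambda_{1}^{2}$, which is exactly the source of the $(2-\ve)$, $(1+\ve)$ and $-\frac{C}{\ve}\mathcal{G}$ factors (the paper's Claim 2), and reuse the Section 4 computation \eqref{3.7} for $\phi'\mathcal{L}(|\partial u|^{2})$. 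The only minor imprecision is that the factor $\tfrac12$ in $\frac{\phi'}{2}\sum_{j}G^{i\bar{i}}(|e_{i}e_{j}u|^{2}+|e_{i}\bar{e}_{j}u|^{2})$ actually arises from absorbing the mixed third-order terms produced by differentiating the equation once (the estimate \eqref{I+II}), not from errors in the eigenvalue expansion, but since you only keep the halved coefficient this does not affect the argument.
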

 First, we calculate $\mathcal{L}(\lambda_{1})$. Let $u_{ij}=e_{i}e_{j}u-(\nabla_{e_{i}}e_{j})u $ and $u_{V_{i}V_{j}}=u_{kl}V^{k}_{i}V^{l}_{j}$. The first and second derivative of $\lambda_{1}$ can be found in \cite{CTW,Gab,Spr} et al, we have
\begin{equation}\label{5..13}
\begin{split}
\mathcal{L}(\lambda_{1})\geq & 2\sum_{\alpha>1}{G}^{i\bar{i}}\frac{|e_{i}(u_{V_{\alpha}V_{1}})|^{2}}{\lambda_{1}-\lambda_{\alpha}}
+ {G}^{i\bar{i}}(e_{i}\bar{e}_{i}-[e_{i},\bar{e}_{i}]^{0,1})(u_{V_{1}V_{1}})-C\lambda_{1}\mathcal{G}.\\
\end{split}
\end{equation}
Since $g$ is almost Hermitian, it follows that
\[ {G}^{i\bar{i}}V_{1}\Big\{e_{i}\bar{e}_{i}(u)-[e_{i},\bar{e}_{i}]^{0,1}(u)\Big\}=-V_{1}(\hat{h}).
\]
Differentiating with $V_{1}$ again, we obtain
\begin{equation}\label{5..15}
{G}^{i\bar{i}}V_{1}V_{1}(\tilde{g}_{i\bar{i}})=- {G}^{i\bar{k},j\bar{l}}V_{1}(\tilde{g}_{i\bar{k}})V_{1}(\tilde{g}_{j\bar{l}})-V_{1}V_{1}(\hat{h}).
\end{equation}
%
%
%
%
~\\
\textbf{Claim 1.} If $\lambda_{1}\gg 1$, then
\begin{equation}\label{claim2}
\begin{split}
 {G}^{i\bar{i}}(e_{i}\bar{e}_{i}-[e_{i},\bar{e}_{i}]^{0,1})(\lambda_{1})
 \geq   & - {G}^{i\bar{k},j\bar{l}}V_{1}(\tilde{g}_{i\bar{k}})V_{1}(\tilde{g}_{j\bar{l}})-C\lambda_{1}\mathcal{G}\\&-2 {G}^{i\bar{i}}\Big\{[V_{1},\bar{e}_{i}]V_{1}e_{i}(u)+[V_{1},e_{i}]V_{1}\bar{e}_{i}(u)\Big\}. \\
\end{split}
\end{equation}
\textit{Proof.}
By direct calculation,
\[
\begin{split}
    & {G}^{i\bar{i}}(e_{i}\bar{e}_{i}-[e_{i},\bar{e}_{i}]^{0,1})(u_{V_{1}V_{1}})\\
   =& {G}^{i\bar{i}}e_{i}\bar{e}_{i}(V_{1}V_{1}(u)-(\nabla_{V_{1}}V_{1})u)
   -{G}^{i\bar{i}}[e_{i},\bar{e}_{i}]^{0,1}(V_{1}V_{1}(u)-(\nabla_{V_{1}}V_{1})u)\\
   \geq& {G}^{i\bar{i}}V_{1}V_{1}(e_{i}\bar{e}_{i}(u)-[e_{i},\bar{e}_{i}]^{0,1}(u))
   -2{G}^{i\bar{i}}\Big\{[V_{1},\bar{e}_{i}]V_{1}e_{i}(u)+[V_{1},e_{i}]V_{1}\bar{e}_{i}(u)\Big\}\\
   &-{G}^{i\bar{i}}(\nabla_{V_{1}}V_{1})e_{i}\bar{e}_{i}(u)
   +{G}^{i\bar{i}}(\nabla_{V_{1}}V_{1})[e_{i},\bar{e}_{i}]^{0,1}(u)
   -C\lambda_{1}\mathcal{G}\\
   \geq&{G}^{i\bar{i}}V_{1}V_{1}(\tilde{g}_{i\bar{i}})
   -2{G}^{i\bar{i}}\Big\{[V_{1},\bar{e}_{i}]V_{1}e_{i}(u)+[V_{1},e_{i}]V_{1}\bar{e}_{i}(u)\Big\}
   +(\nabla_{V_{1}}V_{1})(\hat{h})-C\lambda_{1}\mathcal{G}.\\
\end{split}
\]
Then the Claim 1 follows if $\lambda_{1}\gg 1$.\qed
~\\

Combining the equalities (\ref{5..13}) and (\ref{claim2}) together, it follows that
\begin{equation}
\begin{split}
\mathcal{L}(\lambda_{1})\geq & 2\sum_{\alpha>1}{G}^{i\bar{i}}\frac{|e_{i}(u_{V_{\alpha}V_{1}})|^{2}}{\lambda_{1}-\lambda_{\alpha}}
-{G}^{i\bar{k},j\bar{l}}V_{1}(\tilde{g}_{i\bar{k}})V_{1}(\tilde{g}_{j\bar{l}})
\\&-2{G}^{i\bar{i}}\textrm{Re}\Big\{[V_{1},e_{i}]V\bar{e}_{i}(u)
+[V_{1},\bar{e}_{i}]Ve_{i}(u)\Big\}-C\lambda_{1}\mathcal{G}.
\end{split}
\end{equation}
Since by (\ref{3.7}) it gives us
\begin{equation}
\mathcal{L}(|\partial u|^{2})\geq  \frac{1}{2}\sum_{j} {G}^{i\bar{i}}(|e_{i}e_{j}u|^{2}+|e_{i}\bar{e}_{j}u|^{2})- C\mathcal{G}.
\end{equation}
Hence we have
\begin{equation}\label{4.10-1}
\begin{split}
\mathcal{L}(Q)
\geq & 2\sum_{\alpha>1}{G}^{i\bar{i}}\frac{|e_{i}(u_{V_{\alpha}V_{1}})|^{2}}{\lambda_{1}(\lambda_{1}-\lambda_{\alpha})}
-\frac{1}{\lambda_{1}} {G}^{i\bar{k},j\bar{l}}V_{1}(\tilde{g}_{i\bar{k}})V_{1}(\tilde{g}_{j\bar{l}})+B^{2}e^{B\widetilde{\eta}} {G}^{i\bar{i}}|e_{i}(\widetilde{\eta})|^{2}
\\&+Be^{B\widetilde{\eta}}\mathcal{L}(\widetilde{\eta})-2 {G}^{i\bar{i}}\frac{\textrm{Re}\{[V_{1},e_{i}]V_{1}\bar{e}_{i}(u)+[V_{1},\bar{e}_{i}]V_{1}e_{i}(u)\}}{\lambda_{1}}
-C\mathcal{G}\\
&-{G}^{i\bar{i}}\frac{|e_{i}(\lambda_{1})|^{2}}{\lambda_{1}^{2}}
+ \frac{\phi'}{2}\sum_{j} {G}^{i\bar{i}}(|e_{i}e_{j}u|^{2}+|e_{i}\bar{e}_{j}u|^{2})+\phi'' {G}^{i\bar{i}}|e_{i}(|\partial u|^{2})|^{2}.\\
\end{split}
\end{equation}
Now we deal with the third derivatives of the right hand.
~\\
\textbf{Claim 2.} For any $\ve\in (0,\frac{1}{2}]$, we have
\begin{equation}\label{4.10}
\begin{split}
   & 2 {G}^{i\bar{i}}\frac{\textrm{Re}\{[V_{1},e_{i}]V_{1}\bar{e}_{i}(u)+[V_{1},\bar{e}_{i}]V_{1}e_{i}(u)\}}{\lambda_{1}}\\
    \leq & \ve{G}^{i\bar{i}}\frac{|e_{i}(\lambda_{1})|^{2}}{\lambda_{1}^{2}}+
    \ve\sum_{\alpha>1}{G}^{i\bar{i}}\frac{|e_{i}(u_{V_{\alpha}V_{1}})|^{2}}{\lambda_{1}(\lambda_{1}-\lambda_{\alpha})}
    +\frac{C}{\ve}\mathcal{G}.
\end{split}
\end{equation}
\begin{proof}
We may find $\mu_{i\beta}\in\mathbb{C}$  such that
\[
[V_{1},e_{i}]=\sum_{\beta=1}^{2n} \mu_{i\beta}V_{\beta},~[V_{1},\bar{e}_{i}]=\sum_{\beta=1}^{2n} \overline{\mu_{i\beta}}V_{\beta}.
\]
Therefore,
\[
\textrm{Re}\Big\{[V_{1},e_{i}]V_{1}\bar{e}_{i}(u)+[V_{1},\bar{e}_{i}]V_{1}e_{i}(u)\Big\}\leq C\sum_{\beta=1}^{2n}|V_{\beta}V_{1}e_{i}(u)|.
\]
Then it suffices to estimate $\underset{\beta}{\sum} {G}^{i\bar{i}}\frac{|V_{\beta}V_{1}e_{i}(u)|}{\lambda_{1}}$.
Since
\[
\begin{split}
  \big|V_{\beta}V_{1}e_{i}(u)\big|= &\big|e_{i}V_{\beta}V_{1}(u)+V_{\beta}[V_{1},e_{i}](u)+[V_{\alpha},e_{i}]V_{1}(u)\big|  \\
    =& \big|e_{i}(u_{V_{\beta}V_{1}})+e_{i}(\nabla_{V_{\beta}}V_{1})(u)+V_{\beta}[V_{1},e_{i}](u)
    +[V_{\alpha},e_{i}]V_{1}(u)\big|\\
    \leq &\big|e_{i}(u_{V_{\beta}V_{1}})\big|+C\lambda_{1},
\end{split}
\]
it follows that
\[
\begin{split}
    \sum_{\beta} {G}^{i\bar{i}}\frac{|V_{\beta}V_{1}e_{i}(u)|}{\lambda_{1}}
    \leq &  \sum_{\beta} {G}^{i\bar{i}}\frac{|e_{i}(u_{V_{\beta}V_{1}})|}{\lambda_{1}}+C\mathcal{G}\\
     \leq & {G}^{i\bar{i}}\frac{|e_{i}(\lambda_{1})|}{\lambda_{1}}+\sum_{\alpha>1} {G}^{i\bar{i}}\frac{|e_{i}(u_{V_{\alpha}V_{1}})|}{\lambda_{1}}+C\mathcal{G}.\\
\end{split}
\]
By the C-S inequality, for $\ve\in (0,\frac{1}{2}]$, we derive
\begin{equation}\label{third order derivative 1}
	\begin{split}
{G}^{i\bar{i}}\frac{|e_{i}(\lambda_{1})|}{\lambda_{1}}\leq \ve {G}^{i\bar{i}}\frac{|e_{i}(\lambda_{1})|^{2}}{\lambda_{1}^{2}}+\frac{C}{\ve}\mathcal{G}
	\end{split}
\end{equation}
and
\begin{equation}\label{the second term}
	\begin{split}
		\sum_{\beta>1} G^{i\bar{i}}\frac{|e_{i}(u_{V_{\beta}V_{1}})|}{\lambda_{1}}\leq & \ve \sum_{\beta>1} G^{i\bar{i}}\frac{|e_{i}(u_{V_{\beta}V_{1}})|^{2}}{\lambda_{1}(\lambda_{1}-\lambda_{\beta})}
		+\sum_{\beta>1}\frac{\lambda_{1}-\lambda_{\beta}}{\ve\lambda_{1}}\mathcal{G} \\
		\leq & \ve\sum_{\beta>1} G^{i\bar{i}}\frac{|e_{i}(u_{V_{\beta}V_{1}})|^{2}}{\lambda_{1}(\lambda_{1}-\lambda_{\beta})}
		+\frac{C}{\ve}\mathcal{G},
	\end{split}
\end{equation}
where we used
$$\sum_{\beta=1}^{2n}\lambda_{\beta}=\Delta u=\Delta^{\mathbb{C}}u+\tau(du)\geq -C+\tau(du)\geq -C$$ (see \cite[Eq. (2.5)]{CTW}) for  the last inequality.
Here  $\tau$ is the torsion vector field of $(\omega, J)$ (the dual of its Lee form, see \cite[Lemma 3.2]{TV07}).
By \eqref{third order derivative 1}-\eqref{the second term},  we have
\[
\sum_{\beta=1}^{2n} G^{i\bar{i}}\frac{|V_{\beta}V_{1}e_{i}(u)|}{\lambda_{1}}\leq
\ve G^{i\bar{i}}\frac{|e_{i}(\lambda_{1})|^{2}}{\lambda_{1}^{2}}+
\ve\sum_{\beta>1} G^{i\bar{i}}\frac{|e_{i}(u_{V_{\beta}V_{1}})|^{2}}{\lambda_{1}(\lambda_{1}-\lambda_{\beta})}
+\frac{C}{\ve}\mathcal{G}.
\]
Then these prove (\ref{4.10}).
\end{proof}
Consequently, Lemma \ref{Lma4.2} follows from (\ref{4.10-1}) and (\ref{4.10}). Now we continue to prove Theorem \ref{Thm4.1}.
\subsection{Continued proof of Theorem \ref{Thm4.1}} The proof can be divided into two cases.
~\\
\textbf{Case 1:} Either
~\\
\textbf{Subcase 1.1:} \begin{equation}\label{4.12}
{G}^{n\bar{n}}\leq B^{3}e^{2B\widetilde{\eta}(0)}{G}^{1\bar{1}},~\mathrm{or}
\end{equation}
\textbf{Subcase 1.2:} \begin{equation}\label{5.15}
\frac{\phi'}{4}\sum_{j} {G}^{i\bar{i}}(|e_{i}e_{j}u|^{2}+|e_{i}\bar{e}_{j}u|^{2})>6\sup_{M}(|\nabla \widetilde{\eta}|^{2})B^{2}e^{2B\widetilde{\eta}}\mathcal{G}.\end{equation}
In this case we can choose $\ve=\frac{1}{2}$.  Using the elemental inequality $|a+b|^{2}\leq 4|a|^{2}+\frac{4}{3}|b|^{2}$ for (\ref{5..10}), we get
\begin{equation}
-(1+\ve){G}^{i\bar{i}}\frac{|e_{i}(\lambda_{1})|^{2}}{\lambda_{1}^{2}}\geq
-6\sup_{M}(|\nabla \widetilde{\eta}|^{2})B^{2}e^{2B\widetilde{\eta}}\mathcal{G}-2(\phi')^{2} {G}^{i\bar{i}}|e_{i}(|\partial u|^{2})|^{2}.
\end{equation}
Plugging it into (\ref{4.7"}) gives us
\begin{equation}
\begin{split}
\mathcal{L}(Q)  \geq & (2-\ve)\sum_{\alpha>1}{G}^{i\bar{i}}\frac{|e_{i}(u_{V_{\alpha}V_{1}})|^{2}}{\lambda_{1}(\lambda_{1}-\lambda_{\alpha})}
-\frac{1}{\lambda_{1}} {G}^{i\bar{k},j\bar{l}}V_{1}(\tilde{g}_{i\bar{k}})V_{1}(\tilde{g}_{j\bar{l}})
\\&-\Big(\frac{C}{\ve}+6\sup_{M}\{|\nabla \widetilde{\eta}|^{2}\}B^{2}e^{2B\widetilde{\eta}}\Big)\mathcal{G}
+ \frac{\phi'}{2}\sum_{j} {G}^{i\bar{i}}(|e_{i}e_{j}u|^{2}+|e_{i}\bar{e}_{j}u|^{2})\\&
+Be^{B\widetilde{\eta}}\mathcal{L}(\widetilde{\eta})+B^{2}e^{B\widetilde{\eta}} {G}^{i\bar{i}}|e_{i}(\widetilde{\eta})|^{2}-C.\\
\end{split}
\end{equation}
\subsubsection{Proof of Subcase 1.1.} In this subcase using the facts of concavity of ${G}$, $\mathcal{L}(\widetilde{\eta})$ has uniform lower bound and $\mathcal{G}\geq \Theta$,  we have
\begin{equation}\label{z}
0\geq \mathcal{L}(Q)\geq \frac{\phi'}{2}\sum_{j} {G}^{i\bar{i}}(|e_{i}e_{j}u|^{2}+|e_{i}\bar{e}_{j}u|^{2})-C_{B}\mathcal{G}.\end{equation}
Where and hereafter $C_{B}$ are positive constants depend on $B$.
Since $\{{G}^{i\bar{i}}\}$ are pairwise comparable (up to a multiplier $B^{3}e^{2B\widetilde{\eta}}$) by (\ref{4.12}), then
\[
\sum_{i,j}(|e_{i}e_{j}u|^{2}+|e_{i}\bar{e}_{j}u|^{2})\leq C_{B}K.
\]
Then the complex covariant derivatives
$u_{ij}=e_{i}e_{j}u-(\nabla_{e_{i}}e_{j})u, ~u_{i\bar{j}}=e_{i}\bar{e}_{j}u-(\nabla_{e_{i}}\bar{e}_{j})u$
satisfy
\[
\sum_{i,j}(|u_{ij}|^{2}+|u_{i\bar{j}}|^{2})\leq C_{B}K,
\]
and this proves (\ref{aa}).
\qed
\subsubsection{Proof of Subcase 1.2.} We also have
\begin{equation}\label{4.19}
\begin{split}
\mathcal{L}(Q)  \geq &-\frac{C}{\ve}\mathcal{G}-6\sup_{M}|\nabla \widetilde{\eta}|^{2}B^{2}e^{2B\widetilde{\eta}}\mathcal{G}-C
\\&+ \frac{\phi'}{2}\sum_{j} {G}^{i\bar{i}}(|e_{i}e_{j}u|^{2}+|e_{i}\bar{e}_{j}u|^{2})
+Be^{B\widetilde{\eta}}\mathcal{L}(\widetilde{\eta})
\\ \geq & \frac{\phi'}{4}\sum_{j} {G}^{i\bar{i}}(|e_{i}e_{j}u|^{2}+|e_{i}\bar{e}_{j}u|^{2})
-\frac{C}{\ve}\mathcal{G}
+Be^{B\widetilde{\eta}}\mathcal{L}(\widetilde{\eta}),
\end{split}
\end{equation}
where we have used (\ref{5.15}) in the last inequality.
~\\
(a). If (\ref{2..25}) holds, then by (\ref{4.19}) and the fact
$\mathcal{L}(\widetilde{\eta})\geq \theta\mathcal{G}\geq \frac{1}{2}\theta(\mathcal{G}+\Theta)$
we have
\[
\mathcal{L}(Q)  \geq \frac{\phi'}{4}\sum_{j} {G}^{i\bar{i}}(|e_{i}e_{j}u|^{2}+|e_{i}\bar{e}_{j}u|^{2})+\Big(\frac{1}{2}\theta Be^{B\widetilde{\eta}}-\frac{C}{\ve}\Big)\mathcal{G}+\frac{1}{2}\theta Be^{B\widetilde{\eta}}\Theta.
\]
This yields a contradiction if we further assume $B$ is large enough.
~\\
(b). If (\ref{2..26}) holds, then by (\ref{4.19}) and (\ref{4.31}) we obtain
\[
\mathcal{L}(Q)\geq\frac{\phi'}{4}\sum_{j} {G}^{i\bar{i}}(|e_{i}e_{j}u|^{2}+|e_{i}\bar{e}_{j}u|^{2})-C_{B}
\]
if $B$ is large enough such that $\frac{C}{\ve}\mathcal{G}$ in (\ref{4.19}) can be discarded. Moreover, using the fact
\[{G}^{k\bar{k}}\geq \theta\mathcal{G}\geq \theta\Theta,~\forall k=1,2,\cdots,n,\]
we also have
\[\sum_{j}(|e_{i}e_{j}u|^{2}+|e_{i}\bar{e}_{j}u|^{2})\leq C_{B}K.\]
The rest of the proof is same as Subcase 1.1.
~\\
\textbf{Case 2:} If the Case 1 does not hold, define the index set
\[
I=\big\{1\leq i\leq n:~ {G}^{n\bar{n}}\geq B^{3}e^{2B\widetilde{\eta}}{G}^{i\bar{i}}\big\}.
\]~\\
Observe that $1\in I$, $n\notin I$. Hence we may assume $I=\{1,2,\cdots, p\}$ for $p<n$.
\begin{lemma} \cite[Lemma 5.5]{CTW} Assume $B\geq 6n\sup_{M}|\nabla \widetilde{\eta}|^{2}$. At the origin, we have
\begin{equation}
-(1+\ve)\sum_{i\in I}{G}^{i\bar{i}} \frac{|e_{i}(\lambda_{1})|^{2}}{\lambda_{1}^{2}}\geq -\mathcal{G}-2(\phi')^{2}\sum_{i\in I}{G}^{i\bar{i}}|e_{i}(|\partial u|^{2})|.
\end{equation}
\end{lemma}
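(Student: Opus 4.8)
The plan is to feed the maximum-principle identity \eqref{5..10} into the left-hand side and then cash in the defining inequality of the index set $I$. First I would insert \eqref{5..10} and apply the elementary bound $|a+b|^{2}\le\tfrac{4}{3}|a|^{2}+4|b|^{2}$ with $a=\phi'e_{i}(|\partial u|^{2})$ and $b=Be^{B\widetilde{\eta}}e_{i}(\widetilde{\eta})$; combined with $\ve\le\tfrac{1}{2}$ this gives, for each $i\in I$,
\[
-(1+\ve)\frac{|e_{i}(\lambda_{1})|^{2}}{\lambda_{1}^{2}}\ge-2(\phi')^{2}|e_{i}(|\partial u|^{2})|^{2}-6B^{2}e^{2B\widetilde{\eta}}|e_{i}(\widetilde{\eta})|^{2}.
\]
Multiplying by ${G}^{i\bar{i}}>0$ and summing over $i\in I$ reproduces the term $-2(\phi')^{2}\sum_{i\in I}{G}^{i\bar{i}}|e_{i}(|\partial u|^{2})|^{2}$ verbatim, so the only remaining task is to absorb $6\sum_{i\in I}{G}^{i\bar{i}}B^{2}e^{2B\widetilde{\eta}}|e_{i}(\widetilde{\eta})|^{2}$ into $\mathcal{G}$.

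This last step is exactly where the definition of $I$ enters. For $i\in I$ one has, by definition, ${G}^{i\bar{i}}\le B^{-3}e^{-2B\widetilde{\eta}}{G}^{n\bar{n}}\le B^{-3}e^{-2B\widetilde{\eta}}\mathcal{G}$, so the weight $B^{2}e^{2B\widetilde{\eta}}$ is defeated by the cubic factor $B^{-3}e^{-2B\widetilde{\eta}}$ and
\[
6\sum_{i\in I}{G}^{i\bar{i}}B^{2}e^{2B\widetilde{\eta}}|e_{i}(\widetilde{\eta})|^{2}\le\frac{6}{B}\mathcal{G}\sum_{i\in I}|e_{i}(\widetilde{\eta})|^{2}\le\frac{6n}{B}\sup_{M}|\nabla\widetilde{\eta}|^{2}\,\mathcal{G}\le\mathcal{G},
\]
where I would bound $|e_{i}(\widetilde{\eta})|\le|\nabla\widetilde{\eta}|$ for each $i$, use $|I|\le n$, and invoke the hypothesis $B\ge6n\sup_{M}|\nabla\widetilde{\eta}|^{2}$. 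Putting this bound back gives the asserted inequality.

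I do not expect a genuine obstacle here: the content is the bookkeeping of the elementary inequality together with the simple but decisive observation that, on the index set $I$, the cubic gain $B^{-3}$ in the weights ${G}^{i\bar{i}}$ overwhelms the quadratic loss $B^{2}$ coming from the auxiliary function $\varphi(\widetilde{\eta})=e^{B\widetilde{\eta}}$. The only point requiring care is keeping the constant $2$ in front of $(\phi')^{2}$ sharp — it will later be matched against $\phi''=2(\phi')^{2}$ in Lemma \ref{Lma4.2} — which is what forces the split $\tfrac{4}{3}|a|^{2}+4|b|^{2}$ rather than the $4|a|^{2}+\tfrac{4}{3}|b|^{2}$ used in Case 1; any admissible choice of constants with this property works equally well.
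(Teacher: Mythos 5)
Your proof is correct and is essentially the standard argument: the paper itself gives no proof here (it cites \cite[Lemma 5.5]{CTW}), and your route—inserting the critical-point identity \eqref{5..10}, splitting via $(1+\ve)|a+b|^{2}\leq 2|a|^{2}+6|b|^{2}$ for $\ve\leq\tfrac12$, and then killing the $6B^{2}e^{2B\widetilde{\eta}}\sum_{i\in I}G^{i\bar{i}}|e_{i}(\widetilde{\eta})|^{2}$ term with $G^{i\bar{i}}\leq B^{-3}e^{-2B\widetilde{\eta}}\mathcal{G}$ on $I$ and the hypothesis $B\geq 6n\sup_{M}|\nabla\widetilde{\eta}|^{2}$—is exactly how that lemma is proved. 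Note only that the displayed statement has a typographical omission (the last factor should be $|e_{i}(|\partial u|^{2})|^{2}$, as in its later use), and your argument yields precisely this squared version.
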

Define a new (1,0) vector field by
\[
\widetilde{e}_{1}=\frac{1}{\sqrt{2}}(V_{1}-\sqrt{-1}JV_{1}).
\]
At the origin, we can find a sequence of complex numbers $\nu_{1},\cdots, \nu_{n}$ such that
\[
\widetilde{e}_{1}=\sum_{1}^{n}\nu_{k}e_{k},~\sum_{1}^{n}|\nu_{k}|^{2}=1.
\]
\begin{lemma}\cite[Lemma 5.6]{CTW}\label{coeff} We have
\[
|\nu_{k}|\leq \frac{C_{B}}{\lambda_{1}} ~\textrm{for all} ~k\notin I.
\]
\end{lemma}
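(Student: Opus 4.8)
This is \cite[Lemma 5.6]{CTW}, and I would follow that argument; its core is to convert the bound on $|\nu_{k}|$ into a bound on a $(2,0)$-type second covariant derivative of $u$. The eigenvector relation $\Phi V_{1}=\lambda_{1}V_{1}$ together with $\Phi=\nabla^{2}u-S$ (as bilinear forms, $S$ a fixed smooth tensor) gives, for each $k$,
\[
\nabla^{2}u(e_{k},V_{1})=\lambda_{1}\,g(e_{k},V_{1})+O(1),
\]
the error being bounded by the $C^{1}$ estimate and the normalisation (\ref{5..6}). On the other hand, the definition of $\tilde{g}_{i\bar j}$ gives $\nabla^{2}u(e_{i},\bar e_{j})=\tilde{g}_{i\bar j}-\delta_{ij}+O(1)$. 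Writing $V_{1}=\tfrac{1}{\sqrt{2}}(\widetilde{e}_{1}+\overline{\widetilde{e}_{1}})$ and $\widetilde{e}_{1}=\sum_{l}\nu_{l}e_{l}$, so that $g(e_{k},V_{1})$ is a fixed nonzero multiple of $\bar\nu_{k}$, and separating the $(1,1)$- and $(2,0)$-parts in the first identity, one arrives at
\[
u_{\widetilde{e}_{1}e_{k}}:=\sum_{l}\nu_{l}\,u_{lk}=\bar\nu_{k}\bigl(\lambda_{1}-\tilde{g}_{k\bar k}\bigr)+O(1),
\qquad u_{lk}=e_{l}e_{k}u-(\nabla_{e_{l}}e_{k})u ,
\]
that is, $\bar\nu_{k}\lambda_{1}=u_{\widetilde{e}_{1}e_{k}}+\bar\nu_{k}\tilde{g}_{k\bar k}+O(1)$.

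Next I would note that for $k\notin I$ the diagonal entry $\tilde{g}_{k\bar k}$ is bounded: from $\sigma_{m}(\lambda^{*}(\tilde{g}^{i\bar j}))=\hat h$ in (\ref{dp''}), which is bounded below, the smallest eigenvalue $\tilde{g}_{n\bar n}$ of $\tilde{g}$ is bounded above; and $G^{k\bar k}\ge B^{-3}e^{-2B\widetilde{\eta}}G^{n\bar n}$ together with $G^{i\bar i}=\sigma_{m-1;i}(\lambda^{*})\,(\tilde{g}^{i\bar i})^{2}$ then forces $\tilde{g}_{k\bar k}\le C_{B}$. Hence the identity above becomes $\bar\nu_{k}\lambda_{1}=u_{\widetilde{e}_{1}e_{k}}+O_{B}(1)$, and everything reduces to bounding the $(2,0)$-type second derivative $u_{\widetilde{e}_{1}e_{k}}$ by some $C_{B}$ when $k\notin I$. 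Feeding the first-order condition (\ref{5..10}) at the maximum of $Q$ and the $C^{1}$ estimate, together with the boundedness of $\tilde{g}_{k\bar k}$ just obtained, into the computation exactly as in \cite{CTW}, one gets $|u_{\widetilde{e}_{1}e_{k}}|\le C_{B}$ and therefore $|\nu_{k}|\le C_{B}/\lambda_{1}$.

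I expect the last step to be the main obstacle. Equation (\ref{dp}) controls only the $(1,1)$-part of the Hessian of $u$ (namely $\tilde{g}_{i\bar j}$), so a bound on the $(2,0)$-type mixed second derivatives $u_{lk}$ is not available from the equation itself and must be extracted from the vanishing of $\nabla Q$ at the maximum; it is precisely the membership $k\notin I$, which keeps $\tilde{g}_{k\bar k}$ and thus the relevant diagonal contributions bounded, that makes this extraction possible. The remaining ingredients — the eigenvector identity and the boundedness of $\tilde{g}_{k\bar k}$ for $k\notin I$ — are elementary.
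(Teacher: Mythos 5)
Your reduction is set up correctly: the identity $\nu_k=\sqrt{2}\,g(V_1,\bar e_k)$ together with $\Phi(V_1,\cdot)=\lambda_1 g(V_1,\cdot)$ does give $\nu_k(\lambda_1-\tilde g_{k\bar k})=\overline{u_{\widetilde e_1 e_k}}+O(1)$, and $\tilde g_{k\bar k}\le C_B$ for $k\notin I$ is true (though your one-line justification is incomplete: comparability of $G^{k\bar k}$ with $G^{n\bar n}$ plus $\tilde g_{n\bar n}\le C$ is not quite enough; one also needs a positive lower bound on $G^{n\bar n}$, e.g. $G^{n\bar n}\ge \mathcal G/n\ge\Theta/n$ from (\ref{2..27}), or the identity $G^{k\bar k}\tilde g_{k\bar k}=\sigma_{m-1;k}\tilde g^{k\bar k}\le\sigma_m=\hat h\le C$). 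The genuine gap is the step you yourself flag as the crux: the bound $|u_{\widetilde e_1 e_k}|\le C_B$. You propose to extract it from the first-order condition (\ref{5..10}) and the $C^1$ estimate, but (\ref{5..10}) only relates the third-order quantity $e_i(\lambda_1)$ to $e_i(|\partial u|^2)$ and $e_i(\widetilde\eta)$; since $e_i(\lambda_1)$ is unknown, no pointwise bound on a second-order quantity can come out of it. Indeed, the $(2,0)$-part of $\nabla^2u$ coupling the $V_1$-plane to the $e_k$-plane is not constrained at all by the equation, by admissibility, or by the vanishing of $\nabla Q$ (which constrains third-order data), so some additional second-order input is indispensable and your proposal never supplies it.

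Where that input actually comes from in this argument is the case dichotomy, not the critical-point equation: the lemma is only invoked in Case 2, where Subcase 1.2 fails, i.e. $\frac{\phi'}{4}\sum_j G^{i\bar i}\bigl(|e_ie_ju|^2+|e_i\bar e_ju|^2\bigr)\le 6\sup_M(|\nabla\widetilde\eta|^2)B^2e^{2B\widetilde\eta}\mathcal G$ at the origin. For $k\notin I$ one has $G^{k\bar k}\ge B^{-3}e^{-2B\widetilde\eta}G^{n\bar n}\ge c_B\,\mathcal G$, and $\phi'\ge\frac{1}{2K}$ with $K$ controlled by the first-order estimate, so keeping only the $i=k$ terms yields $\sum_j\bigl(|e_ke_ju|^2+|e_k\bar e_ju|^2\bigr)\le C_B$. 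By symmetry of the covariant Hessian, $u_{\widetilde e_1 e_k}=\sum_l\nu_l u_{kl}$ is then bounded by $C_B$, and your identity gives $|\nu_k|\le C_B/\lambda_1$ once $\lambda_1\ge 2C_B$. (The paper itself gives no proof, deferring to \cite[Lemma 5.6]{CTW}; but whatever the precise formulation there, the needed control is second-order information of this kind, not a consequence of (\ref{5..10}).) So your outline identifies the right skeleton but leaves the essential estimate unproved and attributes it to a mechanism that cannot deliver it.
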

Once we proved it, now we can estimate the first three terms in Lemma \ref{Lma4.2}. Since $JV_{1}$ is $g$-unit and $g$-orthogonal to $V_{1}$, then we can find real numbers $\mu_{2},\cdots,\mu_{2n}$ such that
\[
JV_{1}=\sum_{\alpha>1}\mu_{\alpha}V_{\alpha}, ~\sum_{\alpha>1}\mu_{\alpha}^{2}=1~ \textrm{at the origin}.
\]
The following lemma is key to our estimate.
\begin{lemma}\label{key inequality} For any constant $\gamma>0$, we have
\[
\begin{split}
    & (2-\ve)\sum_{\alpha>1}{G}^{i\bar{i}}\frac{|e_{i}(u_{V_{\alpha}V_{1}})|^{2}}{\lambda_{1}(\lambda_{1}-\lambda_{\alpha})}
-\frac{1}{\lambda_{1}} {G}^{i\bar{k},j\bar{l}}V_{1}(\tilde{g}_{i\bar{k}})V_{1}(\tilde{g}_{j\bar{l}})
     -(1+\ve)\sum_{i\notin I}{G}^{i\bar{i}} \frac{|e_{i}(\lambda_{1})|^{2}}{\lambda_{1}^{2}}\\
   \geq &(2-\ve
    )\sum_{i\notin I}\sum_{\alpha>1}{G}^{i\bar{i}} \frac{|e_{i}(u_{V_{\alpha}V_{1}})|^{2}}{\lambda_{1}(\lambda_{1}-\lambda_{\alpha})}
    +\sum_{k\in I,i\notin I}\frac{2}{\lambda_{1}} {G}^{i\bar{i}}\tilde{g}^{k\bar{k}}|V_{1}(\tilde{g}_{i\bar{k}})|^{2}\\
    &-3\ve\sum_{i\notin I}{G}^{i\bar{i}}\frac{|e_{i}(\lambda_{1})|^{2}}{\lambda_{1}^{2}}-
    2(1-\ve)(1+{\gamma})\tilde{g}_{\tilde{1}\bar{\tilde{1}}}\sum_{k\in I,i\notin I}{G}^{i\bar{i}}\tilde{g}^{k\bar{k}}\frac{|V_{1}(\tilde{g}_{i\bar{k}})|^{2}}{\lambda_{1}^{2}}\\
    &-\frac{C}{\ve}\mathcal{G}-(1-\ve)(1+\frac{1}{\gamma})(\lambda_{1}-\sum_{\alpha>1}\lambda_{\alpha}
    \mu_{\alpha}^{2})
    \sum_{i\notin I}\sum_{\alpha>1}\frac{{G}^{i\bar{i}}}{\lambda_{1}^{2}}
    \frac{|e_{i}(u_{V_{\alpha}V_{1}})|^{2}}{\lambda_{1}-\lambda_{\alpha}}\\
\end{split}
\]
if we assume $\lambda_{1}\geq \frac{n^{2}C_{B}}{\ve}$, where $\tilde{g}_{\tilde{1}\bar{\tilde{1}}}=\sum \tilde{g}_{i\bar{i}}|\nu_{i}|^{2}$.
\end{lemma}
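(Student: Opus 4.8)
The plan is to follow the scheme of the analogous second order estimate in \cite{CTW}; the only properties of $G$ that enter are its concavity, the identity $G^{i\bar j}=\frac{m\hat h}{h}F^{i\bar j}$, and the diagonal form $G^{i\bar i}=\sigma_{m-1;i}(\lambda^{*}(\tilde{g}^{i\bar j}))(\tilde{g}^{i\bar i})^{2}$. All computations are at the origin in the frame \eqref{5..6}. As usual, commutator and torsion terms of $(M,g,J)$ are $O(\lambda_{1})$, and after division by $\lambda_{1}$ or $\lambda_{1}^{2}$ they --- together with every term carrying a factor $\nu_{k}$ with $k\notin I$, which by Lemma \ref{coeff} is $O(C_{B}/\lambda_{1})$ once $\lambda_{1}\geq n^{2}C_{B}/\ve$ --- are absorbed into $\frac{C}{\ve}\mathcal{G}$. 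On an almost Hermitian manifold the Nijenhuis tensor generates extra terms of this shape, and part of the work is simply checking that each of them is only $O(\lambda_{1})$, not $O(\lambda_{1}^{2})$.

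\emph{The good concavity term.} Diagonalizing $G$ at the origin and splitting $-G^{i\bar k,j\bar l}V_{1}(\tilde{g}_{i\bar k})V_{1}(\tilde{g}_{j\bar l})$ into diagonal and off--diagonal parts, the diagonal part is $\geq 0$ by concavity, while the off--diagonal part equals $-\sum_{i\neq k}\frac{G^{i\bar i}-G^{k\bar k}}{\tilde{g}_{i\bar i}-\tilde{g}_{k\bar k}}|V_{1}(\tilde{g}_{i\bar k})|^{2}$. A short computation with $\sigma_{m-1}$ gives, for every $i\neq k$,
\[
-\frac{G^{i\bar i}-G^{k\bar k}}{\tilde{g}_{i\bar i}-\tilde{g}_{k\bar k}}\ =\ \tilde{g}^{k\bar k}G^{i\bar i}+\mu_{i}\mu_{k}^{2}\sigma_{m-1;ik}\ \geq\ \tilde{g}^{k\bar k}G^{i\bar i}.
\]
Applying this to the two ordered pairs $(i,k)$ and $(k,i)$ of each couple with one index in $I$ and the other outside (the coefficient is symmetric and $|V_{1}(\tilde{g}_{i\bar k})|=|V_{1}(\tilde{g}_{k\bar i})|$) yields
\[
-\frac{1}{\lambda_{1}}G^{i\bar k,j\bar l}V_{1}(\tilde{g}_{i\bar k})V_{1}(\tilde{g}_{j\bar l})\ \geq\ \frac{2}{\lambda_{1}}\sum_{k\in I,\,i\notin I}G^{i\bar i}\tilde{g}^{k\bar k}|V_{1}(\tilde{g}_{i\bar k})|^{2},
\]
everything discarded being nonnegative; this is the second good term of the statement, and the point --- as anticipated in the introduction --- is that $-G^{i\bar k,j\bar l}$ is here used to dominate the bad third order terms.

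\emph{Expanding $e_{i}(\lambda_{1})$ and Cauchy--Schwarz.} Using the first derivative formula for the top eigenvalue of $\Phi$ (cf. \cite{CTW,Gab,Spr}) together with $\widetilde{e}_{1}=\sum_{k}\nu_{k}e_{k}$, $V_{1}=\frac{1}{\sqrt{2}}(\widetilde{e}_{1}+\overline{\widetilde{e}}_{1})$ and $JV_{1}=\sum_{\alpha>1}\mu_{\alpha}V_{\alpha}$, one writes $e_{i}(\lambda_{1})=e_{i}(u_{V_{1}V_{1}})+O(\lambda_{1})$ and, expanding $V_{1}$, splits this into a $\widetilde{e}_{1}\overline{\widetilde{e}}_{1}$--part and a (conjugate pair of) $\widetilde{e}_{1}\widetilde{e}_{1}$--part; after commuting derivatives the first reduces, up to the negligible tails, to a combination of the $V_{1}(\tilde{g}_{i\bar k})$ with $k\in I$, and the second to a combination $\sum_{\alpha>1}(\,\cdot\,)\,e_{i}(u_{V_{\alpha}V_{1}})$. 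The weights in the statement come out of $\sum_{\alpha>1}\mu_{\alpha}^{2}=1$ and $u_{V_{1}V_{1}}+u_{JV_{1}JV_{1}}=2u_{\widetilde{e}_{1}\overline{\widetilde{e}}_{1}}=2\tilde{g}_{\tilde{1}\bar{\tilde{1}}}+O(1)$, i.e. $\lambda_{1}-\sum_{\alpha>1}\lambda_{\alpha}\mu_{\alpha}^{2}=2\lambda_{1}-2\tilde{g}_{\tilde{1}\bar{\tilde{1}}}+O(1)$. Inserting the decomposition of $e_{i}(\lambda_{1})$ into $-(1+\ve)\sum_{i\notin I}G^{i\bar i}\frac{|e_{i}(\lambda_{1})|^{2}}{\lambda_{1}^{2}}$ and using Cauchy--Schwarz with parameter $\gamma$ splits $|e_{i}(\lambda_{1})|^{2}$ into $(1+\gamma)$ times the $V_{1}(\tilde{g}_{i\bar k})$--part and $(1+\tfrac{1}{\gamma})$ times the $e_{i}(u_{V_{\alpha}V_{1}})$--part, plus $O(\ve)$ residuals; a further Cauchy--Schwarz over $k\in I$ with weights $\nu_{k}^{2}\tilde{g}_{k\bar k}$ (summing to $\tilde{g}_{\tilde{1}\bar{\tilde{1}}}$) bounds the first piece below by $-2(1-\ve)(1+\gamma)\tilde{g}_{\tilde{1}\bar{\tilde{1}}}\sum_{k\in I,\,i\notin I}G^{i\bar i}\tilde{g}^{k\bar k}\frac{|V_{1}(\tilde{g}_{i\bar k})|^{2}}{\lambda_{1}^{2}}$, and one over $\alpha>1$ with weights $\lambda_{1}-\lambda_{\alpha}$ bounds the second piece below by $-(1-\ve)(1+\tfrac{1}{\gamma})(\lambda_{1}-\sum_{\alpha>1}\lambda_{\alpha}\mu_{\alpha}^{2})\sum_{i\notin I}\sum_{\alpha>1}\frac{G^{i\bar i}}{\lambda_{1}^{2}}\frac{|e_{i}(u_{V_{\alpha}V_{1}})|^{2}}{\lambda_{1}-\lambda_{\alpha}}$; the residuals and the remaining lower order errors are collected into $-3\ve\sum_{i\notin I}G^{i\bar i}\frac{|e_{i}(\lambda_{1})|^{2}}{\lambda_{1}^{2}}-\frac{C}{\ve}\mathcal{G}$, which is where $\lambda_{1}\geq n^{2}C_{B}/\ve$ is used. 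Adding the good term of the previous paragraph and the retained part $(2-\ve)\sum_{i\notin I}\sum_{\alpha>1}G^{i\bar i}\frac{|e_{i}(u_{V_{\alpha}V_{1}})|^{2}}{\lambda_{1}(\lambda_{1}-\lambda_{\alpha})}$ of the first good term gives the asserted inequality.

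\emph{Main difficulty.} The only real obstacle is the bookkeeping in the step above: untangling $e_{i}(\lambda_{1})$ so that precisely the combinations $V_{1}(\tilde{g}_{i\bar k})$, $k\in I$, and $e_{i}(u_{V_{\alpha}V_{1}})$, $\alpha>1$, survive with the sharp weights $\tilde{g}_{\tilde{1}\bar{\tilde{1}}}$ and $\lambda_{1}-\sum_{\alpha>1}\lambda_{\alpha}\mu_{\alpha}^{2}$, and verifying that the many almost-Hermitian by-products --- derivatives of the eigenvector field $V_{1}$, Nijenhuis/torsion commutators, and the $k\notin I$ tails --- are all $O(\lambda_{1})$, hence fit inside $\frac{C}{\ve}\mathcal{G}$. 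Everything else is Cauchy--Schwarz.
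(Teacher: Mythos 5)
Your overall scheme (decompose $e_{i}(\lambda_{1})$ into a $V_{1}(\tilde{g}_{i\bar{k}})$-part and an $e_{i}(u_{V_{\alpha}V_{1}})$-part, weighted Cauchy--Schwarz with parameter $\gamma$, concavity of $G$ supplying the good term) is the paper's, but there is a genuine gap in how you dispose of the $k\notin I$ contributions. You claim that every term carrying a factor $\nu_{k}$ with $k\notin I$ is ``absorbed into $\frac{C}{\ve}\mathcal{G}$'' because $|\nu_{k}|\leq C_{B}/\lambda_{1}$. That cannot work: after squaring, these tails appear as $\frac{C_{B}}{\ve}\sum_{i\notin I,\,k\notin I}G^{i\bar{i}}\frac{|V_{1}(\tilde{g}_{i\bar{k}})|^{2}}{\lambda_{1}^{4}}$ (including the diagonal pieces $k=i$), i.e.\ as squares of \emph{third} derivatives of $u$, which are not controlled by any zeroth-order quantity, so the smallness of $\nu_{k}$ does not put them inside $\frac{C}{\ve}\mathcal{G}$. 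In the paper these terms are written out explicitly in Step 2 and are cancelled by exactly the positive pieces of the concavity term that you discard (``everything discarded being nonnegative''), namely the $i,k\notin I$ off-diagonal part and the diagonal part $\sum_{i}G^{i\bar{i}}\tilde{g}^{i\bar{i}}|V_{1}(\tilde{g}_{i\bar{i}})|^{2}$ of \eqref{5..27}, through the bound $\tilde{g}^{k\bar{k}}\geq\frac{C_{B}}{\ve\lambda_{1}^{3}}$ of \eqref{inequ}; this is the only place where the hypothesis $\lambda_{1}\geq n^{2}C_{B}/\ve$ is genuinely used, whereas you attribute that hypothesis to the (impossible) absorption into $\frac{C}{\ve}\mathcal{G}$ --- which is precisely the symptom of the gap. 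To repair the argument you must retain those positive terms from $-\frac{1}{\lambda_{1}}G^{i\bar{k},j\bar{l}}V_{1}(\tilde{g}_{i\bar{k}})V_{1}(\tilde{g}_{j\bar{l}})$ and run the absorption as in \eqref{5.27}--\eqref{5.28}.

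For what it is worth, your handling of the concavity term itself is sound and in one respect cleaner than the paper's: the identity $-\frac{G^{i\bar{i}}-G^{k\bar{k}}}{\tilde{g}_{i\bar{i}}-\tilde{g}_{k\bar{k}}}=\tilde{g}^{k\bar{k}}G^{i\bar{i}}+\sigma_{m-1;ik}\,\tilde{g}^{i\bar{i}}(\tilde{g}^{k\bar{k}})^{2}$ is correct, and since this symmetric coefficient dominates $\tilde{g}^{k\bar{k}}G^{i\bar{i}}$ in the orientation $i\notin I$, $k\in I$, counting each unordered pair twice yields $\frac{2}{\lambda_{1}}\sum_{k\in I,\,i\notin I}G^{i\bar{i}}\tilde{g}^{k\bar{k}}|V_{1}(\tilde{g}_{i\bar{k}})|^{2}$ directly, bypassing the comparison \eqref{5.27-1} (whose stated direction does not in fact follow from the monotonicity $\sigma_{m-1;i}\tilde{g}^{i\bar{i}}\leq\sigma_{m-1;k}\tilde{g}^{k\bar{k}}$; your route, or equivalently keeping the full first line of \eqref{5..25}, is the safe one). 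But this improvement does not close the gap above: the same computation also produces the discarded $i,k\notin I$ positive pieces, and your proof needs them.
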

~\\
\textit{Proof.} \textrm{Step 1:} We can prove
\begin{equation}\label{4.26}
e_{i}(\lambda_{1})=\sqrt{2}\sum_{k} \overline{\nu_{k}}V_{1}(\tilde{g}_{i\bar{k}})-\sqrt{-1}\sum_{\alpha>1}\mu_{\alpha}e_{i}(u_{V_{1}
V_{\alpha}})+O(\lambda_{1}),\end{equation}
where $O(\lambda_{1})$ denotes the terms those can be controlled by $\lambda_{1}$. Indeed, since $\overline{\widetilde{e}_{1}}=\frac{1}{\sqrt{2}}(V_{1}+\sqrt{-1}JV_{1})$, therefore,
\[
e_{i}(\lambda_{1})=\sqrt{2} e_{i}(u_{V_{1}\overline{\widetilde{e}_{1}}})-\sqrt{-1} e_{i}(u_{V_{1}JV_{1}}).
\]
The first term
\begin{equation}\label{4.27}
\begin{split}
e_{i}(u_{V_{1}\overline{\widetilde{e}_{1}}})= & e_{i}(V_{1}\overline{\widetilde{e}_{1}}u-(\nabla_{V_{1}}\overline{\widetilde{e}_{1}})u)
=\overline{\widetilde{e}_{1}}e_{i}V_{1}u+O(\lambda_{1})   \\
= & \sum_{k}\overline{\nu_{k}}V_{1}(\tilde{g}_{i\bar{k}})+O(\lambda_{1}).
\end{split}
\end{equation}
The second term
\begin{equation}\label{4.28}
\begin{split}
e_{i}(u_{V_{1}JV_{1}})=&e_{i}{V_{1}JV_{1}}(u)+O(\lambda_{1})=JV_{1}e_{i}{V_{1}}(u)+O(\lambda_{1})  \\
=&\sum_{\alpha>1}V_{\alpha}e_{i}{V_{1}}(u)+O(\lambda_{1})=\sum_{\alpha>1}e_{i}(u_{V_{\alpha}{V_{1}}})+O(\lambda_{1}).\\
\end{split}
\end{equation}
Thus, (\ref{4.26}) follows from (\ref{4.27}) and (\ref{4.28}).
~\\
Step 2: Hence we have
\[
\begin{split}
&-(1+\ve)\sum_{i\notin I} {G}^{i\bar{i}}\frac{|e_{i}(\lambda_{1})|^{2}}{\lambda_{1}^{2}}\\
\geq &
    -(1-\ve)\sum_{i\notin I} {G}^{i\bar{i}}\frac{|\sqrt{2}\sum_{k\in I} \overline{\nu_{k}}V_{1}(\tilde{g}_{i\bar{k}})-\sqrt{-1}\sum_{\alpha>1}\mu_{\alpha}e_{i}(u_{V_{1}
V_{\alpha}})|^{2}}{\lambda_{1}^{2}}\\
&-3\ve \sum_{i\notin I} {G}^{i\bar{i}}\frac{|e_{i}(\lambda_{1})|^{2}}{\lambda_{1}^{2}}-\frac{C_{B}}{\ve}\sum_{i\notin I,k\notin I}{G}^{i\bar{i}}\frac{|V_{1}(\tilde{g}_{i\bar{k}})|^{2}}{\lambda_{1}^{4}}-\frac{C}{\ve}\mathcal{G},
\end{split}
\]
where we used the Lemma \ref{coeff}.
~\\
Step 3: Using the C-S inequality, we have
\begin{equation*}
	\begin{split}
		&\Big|\sum_{\alpha>1}\mu_{\alpha}e_{i}(u_{V_{1}V_{\alpha}})\Big|^{2}\leq \sum_{\alpha>1}(\lambda_{1}-\lambda_{\alpha}\mu_{\alpha}^{2})
		\sum_{\beta>1}\frac{|e_{i}(u_{V_{1}V_{\beta}})|^{2}}{\lambda_{1}-\lambda_{\beta}},\\
		&\Big|\sum_{k\in I}\overline{\nu_{k}}V_{1}(\tilde{g}_{i\bar{k}})\Big|^{2}\leq \Big(\sum_{i}\tilde{g}_{i\bar{i}}|\nu_{i}|^{2}\Big)\sum_{k\in I}\tilde{g}^{k\bar{k}}|V_{1}(\tilde{g}_{i\bar{k}})|^{2}.
	\end{split}
\end{equation*}
Then for each $\gamma>0$, using the C-S inequality again to get
\[
\begin{split}
   & (1-\ve)\sum_{i\notin I} {G}^{i\bar{i}}\frac{|\sqrt{2}\sum_{k\in I} \overline{\nu_{k}}V_{1}(\tilde{g}_{i\bar{k}})-\sqrt{-1}\sum_{\alpha>1}\mu_{\alpha}e_{i}(u_{V_{1}
V_{\alpha}})|^{2}}{\lambda_{1}^{2}}\\
\leq & 2(1-\ve)(1+\gamma)\sum_{i\notin I} {G}^{i\bar{i}}\frac{|\sum_{k\in I} \overline{\nu_{k}}V_{1}(\tilde{g}_{i\bar{k}})|^{2}}{\lambda_{1}^{2}}\\
&+(1-\ve)(1+\frac{1}{\gamma})\sum_{i\notin I} {G}^{i\bar{i}}\frac{|\sum_{\alpha>1}\mu_{\alpha}e_{i}(u_{V_{1}
V_{\alpha}})|^{2}}{\lambda_{1}^{2}}\\
\leq & 2(1-\ve)(1+\gamma)\tilde{g}_{\tilde{1}\bar{\tilde{1}}}\sum_{i\notin I}\sum_{k\in I} \frac{{G}^{i\bar{i}}}{\lambda_{1}^{2}} \tilde{g}^{k\bar{k}}|V_{1}(\tilde{g}_{i\bar{k}})|^{2}\\
&+(1-\ve)(1+\frac{1}{\gamma})(\lambda_{1}-\sum_{\alpha>1} \lambda_{\alpha}\mu_{\alpha}^{2})\sum_{i\notin I}\sum_{\alpha>1}\frac{{G}^{i\bar{i}}}{\lambda_{1}^{2}} \frac{|e_{i}(u_{V_{\alpha}V_{1}})|^{2}}{\lambda_{1}-\lambda_{\alpha}}.
\end{split}
\]
Step 4: By direct calculation,
\begin{equation}\label{5..25}
\begin{split}
&-{G}^{i\bar{k},j\bar{l}}V_{1}(\tilde{g}_{i\bar{k}})V_{1}(\tilde{g}_{j\bar{l}})\\
 = & \sum_{i\neq k}\sigma_{m-2;ik}(\tilde{g}^{i\bar{i}})^{2}(\tilde{g}^{k\bar{k}})^{2}
    \big\{V_{1}(\tilde{g}_{i\bar{i}})V_{1}
    (\tilde{g}_{k\bar{k}})-|V_{1}(\tilde{g}_{i\bar{k}})|^{2}\big\}
    +2\sum_{i,k}{G}^{i\bar{i}}\tilde{g}^{k\bar{k}}|V_{1}(\tilde{g}_{i\bar{k}})|^{2}\\
 =&2\sum_{i\neq k}\sigma_{m-1;i}(\tilde{g}^{i\bar{i}})^{2}\tilde{g}^{k\bar{k}}|V_{1}(\tilde{g}_{i\bar{k}})|^{2}
    +2\sum_{i=1}^{n}\sigma_{m-1;i}(\tilde{g}^{i\bar{i}})^{3}|V_{1}(\tilde{g}_{i\bar{i}})|^{2}\\
&+\sum_{i\neq k}\sigma_{m-2;ik}(\tilde{g}^{i\bar{i}})^{2}(\tilde{g}^{k\bar{k}})^{2}
    \big\{V_{1}(\tilde{g}_{i\bar{i}})V_{1}(\tilde{g}_{k\bar{k}})-|V_{1}(\tilde{g}_{i\bar{k}})|^{2}\big\}\\
\geq&\sum_{i\neq k}\sigma_{m-1;i}(\tilde{g}^{i\bar{i}})^{2}\tilde{g}^{k\bar{k}}|V_{1}(\tilde{g}_{i\bar{k}})|^{2}
    +2\sum_{i=1}^{n}\sigma_{m-1;i}(\tilde{g}^{i\bar{i}})^{3}|V_{1}(\tilde{g}_{i\bar{i}})|^{2}
\\&+\sum_{i\neq k}\sigma_{m-2;ik}(\tilde{g}^{i\bar{i}})^{2}(\tilde{g}^{k\bar{k}})^{2}
    V_{1}(\tilde{g}_{i\bar{i}})V_{1}(\tilde{g}_{k\bar{k}}),\\
\end{split}
\end{equation}
where we used the following inequality (see \cite{GS13}):
\[
\sum_{i\neq k}(\sigma_{m-1;i}-\sigma_{m-2;ik}\tilde{g}^{k\bar{k}})(\tilde{g}^{i\bar{i}})^{2}\tilde{g}^{k\bar{k}}
|V_{1}(\tilde{g}_{i\bar{k}})|^{2}\geq 0.
\]
We also need the next inequality from \cite{GLZ09}, see also \cite{FLM11,GS13}:
\[
\sum_{i}\frac{\sigma_{m-1;i}(\tau)}{\tau_{i}}\xi_{i}\bar{\xi}_{i}
+\sum_{i\neq k}\sigma_{m-2;ik}(\tau)\xi_{i}\bar{\xi}_{k}\geq \sum_{i,k}\frac{\sigma_{m-1;i}(\tau)\sigma_{m-1;k}(\tau)}{\sigma_{m}(\tau)}\xi_{i}\bar{\xi}_{k}
= 0
\]
for every $\tau=(\tau_{1},\cdots,\tau_{n})\in \Gamma_{n}$ and $(\xi_{1},\cdots,\xi_{n})\in \mathbb{C}^{n}$. Choose $\tau=(\tilde{g}^{1\bar{1}},\cdots,\tilde{g}^{n\bar{n}})$ and $\xi_{i}=V_{1}(\tilde{g}^{i\bar{i}})$, then
\begin{equation}\label{5.28-1}
\sum_{i=1}^{n}\sigma_{m-1;i}(\tilde{g}^{i\bar{i}})^{3}|V_{1}(\tilde{g}_{i\bar{i}})|^{2}+\sum_{i\neq k}\sigma_{m-2;ik}(\tilde{g}^{i\bar{i}})^{2}(\tilde{g}^{k\bar{k}})^{2}
    V_{1}(\tilde{g}_{i\bar{i}})V_{1}(\tilde{g}_{k\bar{k}})\geq 0.
\end{equation}
It follows (\ref{5..25})-(\ref{5.28-1}) that
\begin{equation}\label{5..27}
\begin{split}
&-{G}^{i\bar{k},j\bar{l}}V_{1}(\tilde{g}_{i\bar{k}})V_{1}(\tilde{g}_{j\bar{l}})
\geq\sum_{i\neq
k}{G}^{i\bar{i}}\tilde{g}^{k\bar{k}}|V_{1}(\tilde{g}_{i\bar{k}})|^{2}
    +\sum_{i=1}^{n}{G}^{i\bar{i}}\tilde{g}^{i\bar{i}}|V_{1}(\tilde{g}_{i\bar{i}})|^{2}.\\
\end{split}
\end{equation}
If $\tilde{g}_{i\bar{i}}\geq \tilde{g}_{k\bar{k}}$, we have $\sigma_{m-1;i}\tilde{g}^{i\bar{i}}\leq \sigma_{m-1;k}\tilde{g}^{k\bar{k}}$. Hence
\begin{equation}\label{5.27-1}
\sum_{i\in I,k\notin I}{G}^{i\bar{i}}\tilde{g}^{k\bar{k}}|V_{1}(\tilde{g}_{i\bar{k}})|^{2}\geq \sum_{i\notin I,k\in I}{G}^{i\bar{i}}\tilde{g}^{k\bar{k}}|V_{1}(\tilde{g}_{i\bar{k}})|^{2}.
\end{equation}
On the one hand, since $\{(i,k):1\leq i,k\leq n,i\in I,k\notin I\}$, $\{(i,k):1\leq i,k\leq n,i\notin I,k\in I\}$ and  $\{(i,k):1\leq i\neq k\leq n,i,k\notin I\}$ are pairwise disjoint subsets of $\{(i,k):1\leq i\neq k\leq n\}$, and by (\ref{5.27-1}), to show
\begin{equation}\label{5.27}
\begin{split}
\sum_{i\neq k}{G}^{i\bar{i}}\tilde{g}^{k\bar{k}}|V_{1}(\tilde{g}_{i\bar{k}})|^{2}
\geq& 2\sum_{i\notin I,k\in I}{G}^{i\bar{i}}\tilde{g}^{k\bar{k}}|V_{1}(\tilde{g}_{i\bar{k}})|^{2}
\\&+\frac{C_{B}}{\ve\lambda_{1}^{3}}\sum_{i\neq k,i\notin I,k\notin I}{G}^{i\bar{i}}{|V_{1}(\tilde{g}_{i\bar{k}})|^{2}},
\end{split}
\end{equation}
we shall prove
 \begin{equation}\label{inequ}
   \tilde{g}^{k\bar{k}}\geq \frac{C_{B}}{\ve\lambda_{1}^{3}}
 \end{equation}
for each $k$. Since $\tilde{g}_{1\bar{1}}$ is comparable to $\lambda_{1}$ and $\lambda_{1}\geq \frac{n^{2}C_{B}}{\ve}$, then $\tilde{g}^{1\bar{1}}\geq \frac{C_{B}}{\ve\lambda_{1}^{3}}$. This proves \eqref{inequ} because $\tilde{g}^{k\bar{k}}\geq \tilde{g}^{1\bar{1}}$. On the other hand, it  also implies that
\begin{equation}\label{5.28}
\sum_{i=1}^{n}{G}^{i\bar{i}}\tilde{g}^{i\bar{i}}|V_{1}(\tilde{g}_{i\bar{i}})|^{2}\geq \frac{C_{B}}{\ve\lambda_{1}^{3}}\sum_{i\notin I}{G}^{i\bar{i}}{|V_{1}(\tilde{g}_{i\bar{i}})|^{2}}.
\end{equation}It follows from (\ref{5..27}), (\ref{5.27}) and (\ref{5.28}) that
\begin{equation}
\begin{split}
&-\frac{1}{\lambda_{1}} {G}^{i\bar{k},j\bar{l}}V_{1}(\tilde{g}_{i\bar{k}})V_{1}(\tilde{g}_{j\bar{l}})\\
\geq &
\frac{2}{\lambda_{1}}\sum_{k\in I,i\notin I}{G}^{i\bar{i}}\tilde{g}^{k\bar{k}}|V_{1}(\tilde{g}_{i\bar{k}})|^{2}
+\frac{C_{B}}{\ve\lambda_{1}^{4}}\sum_{i\notin I,k\notin I}{G}^{i\bar{i}}{|V_{1}(\tilde{g}_{i\bar{k}})|^{2}}.\\
\end{split}
\end{equation}
~\\
Consequently, the lemma follows from the previous several steps.\qed
\begin{lemma}\label{Lma5.6} If we assume $\lambda_{1}\geq C/{\ve^{3}}$. Then
\[
\begin{split}
&(2-\ve)\sum_{\alpha>1}{G}^{i\bar{i}}\frac{|e_{i}(u_{V_{\alpha}V_{1}})|^{2}}
{\lambda_{1}(\lambda_{1}-\lambda_{\alpha})}-\frac{1}{\lambda_{1}} {G}^{i\bar{k},j\bar{l}}V_{1}(\tilde{g}_{i\bar{k}})V_{1}(\tilde{g}_{j\bar{l}})
-(1+\ve)\sum_{i\notin I} {G}^{i\bar{i}}\frac{|e_{i}(\lambda_{1})|^{2}}{\lambda_{1}^{2}}\\
\geq& -6\ve B^{2}e^{2B\widetilde{\eta}}\sum_{i} {G}^{i\bar{i}}|e_{i}(\widetilde{\eta})|^{2}-6\ve (\phi')^{2}\sum_{i\notin I}{G}^{i\bar{i}}|e_{i}(|\partial u|^{2})|^{2}-\frac{C}{\ve}\mathcal{G}.
\end{split}
\]
\end{lemma}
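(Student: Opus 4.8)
The goal is to combine the key inequality of Lemma \ref{key inequality} with the estimates from Step~2 of its proof to get rid of the troublesome terms involving $\tilde g^{k\bar k}|V_1(\tilde g_{i\bar k})|^2$ (for $k\in I$, $i\notin I$) and the third-order terms $|e_i(u_{V_\alpha V_1})|^2$. First I would apply Lemma~\ref{key inequality} with the parameter $\gamma$ still free, so that the left-hand side of the present Lemma~\ref{Lma5.6} is bounded below by
\[
(2-\ve)\sum_{i\notin I}\sum_{\alpha>1}{G}^{i\bar i}\frac{|e_i(u_{V_\alpha V_1})|^2}{\lambda_1(\lambda_1-\lambda_\alpha)}
+\Big(\frac{2}{\lambda_1}-\frac{2(1-\ve)(1+\gamma)\tilde g_{\tilde1\bar{\tilde1}}}{\lambda_1^2}\Big)\sum_{k\in I,i\notin I}{G}^{i\bar i}\tilde g^{k\bar k}|V_1(\tilde g_{i\bar k})|^2
\]
minus the $3\ve\sum_{i\notin I}{G}^{i\bar i}|e_i(\lambda_1)|^2/\lambda_1^2$ term, the $(1-\ve)(1+\tfrac1\gamma)(\lambda_1-\sum\lambda_\alpha\mu_\alpha^2)$ third-order term, and $-\frac{C}{\ve}\mathcal{G}$. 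The crucial algebraic point is that $\tilde g_{\tilde1\bar{\tilde1}}=\sum\tilde g_{i\bar i}|\nu_i|^2$ is comparable to $\lambda_1$ (since $\lambda_1$ is comparable to $\tilde g_{1\bar 1}$ and $|\nu_k|\le C_B/\lambda_1$ for $k\notin I$ by Lemma~\ref{coeff}), while also $\lambda_1-\sum_{\alpha>1}\lambda_\alpha\mu_\alpha^2$ is comparable to $\lambda_1$ because $JV_1$ is $g$-orthogonal to $V_1$, so $\sum_{\alpha>1}\lambda_\alpha\mu_\alpha^2=u_{JV_1JV_1}\le C$ (it is a bounded quadratic form applied to a unit vector, once we use that only $\lambda_1$ can be large — all other $\lambda_\alpha$ are bounded, or more precisely $\sum\lambda_\alpha$ is bounded below so no $\lambda_\alpha$ with $\alpha>1$ is large).

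\textbf{Choice of $\gamma$ and cancellation.} With these comparabilities the coefficient of $\sum_{k\in I,i\notin I}{G}^{i\bar i}\tilde g^{k\bar k}|V_1(\tilde g_{i\bar k})|^2$ is $\frac{2}{\lambda_1}\big(1-(1-\ve)(1+\gamma)\tilde g_{\tilde1\bar{\tilde1}}/\lambda_1\big)$. Here I would choose $\gamma=\gamma(\ve)$ small (comparable to $\ve$) so that $(1-\ve)(1+\gamma)\cdot(\tilde g_{\tilde1\bar{\tilde1}}/\lambda_1)\le 1$, using that $\tilde g_{\tilde1\bar{\tilde1}}/\lambda_1\le 1+C_B/\lambda_1\le 1+\ve$ once $\lambda_1\ge C_B/\ve$; with this choice the mixed-second-order bracket is $\ge 0$ and can simply be discarded. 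For the $1/\gamma=C/\ve$ factor multiplying $(\lambda_1-\sum\lambda_\alpha\mu_\alpha^2)\sim\lambda_1$, the resulting term is $\frac{C}{\ve}\sum_{i\notin I}\sum_{\alpha>1}\frac{{G}^{i\bar i}}{\lambda_1}\frac{|e_i(u_{V_\alpha V_1})|^2}{\lambda_1-\lambda_\alpha}$, and this is absorbed into the good term $(2-\ve)\sum_{i\notin I}\sum_{\alpha>1}{G}^{i\bar i}\frac{|e_i(u_{V_\alpha V_1})|^2}{\lambda_1(\lambda_1-\lambda_\alpha)}$ provided $\lambda_1\ge C/\ve^3$ (so that $\frac{C}{\ve\lambda_1}\le\ve\le 2-\ve$, roughly). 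After this absorption the third-order positive terms survive with a nonnegative coefficient and are thrown away.

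\textbf{Handling $\sum_{i\notin I}|e_i(\lambda_1)|^2$.} The remaining term $-3\ve\sum_{i\notin I}{G}^{i\bar i}|e_i(\lambda_1)|^2/\lambda_1^2$ (coming from Step~2) is estimated by plugging in the critical-point identity \eqref{5..10}: $e_i(\lambda_1)/\lambda_1=-\phi'e_i(|\partial u|^2)-Be^{B\widetilde\eta}e_i(\widetilde\eta)$, so $|e_i(\lambda_1)|^2/\lambda_1^2\le 2(\phi')^2|e_i(|\partial u|^2)|^2+2B^2e^{2B\widetilde\eta}|e_i(\widetilde\eta)|^2$. Summing over $i\notin I$ against ${G}^{i\bar i}$ and multiplying by $3\ve$ yields exactly the two negative terms $-6\ve B^2e^{2B\widetilde\eta}\sum_i {G}^{i\bar i}|e_i(\widetilde\eta)|^2$ and $-6\ve(\phi')^2\sum_{i\notin I}{G}^{i\bar i}|e_i(|\partial u|^2)|^2$ claimed in the statement (enlarging the first sum from $i\notin I$ to all $i$ is harmless). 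Collecting everything, the only error term that is not manifestly of the claimed form is the accumulated $-\frac{C}{\ve}\mathcal{G}$, which is already present. I expect the main obstacle to be the bookkeeping of the several smallness thresholds on $\lambda_1$ (one needs $\lambda_1\ge n^2C_B/\ve$ for Lemma~\ref{key inequality}, plus $\lambda_1\ge C/\ve^3$ here, plus $\lambda_1\ge C_B/\ve$ for the $\gamma$-choice) and verifying that the choice of $\gamma$ can be made depending only on $\ve$ (not on $B$) so that it does not interfere with the later large-$B$ argument; the comparability $\tilde g_{\tilde1\bar{\tilde1}}\sim\lambda_1$ and $\lambda_1-\sum_{\alpha>1}\lambda_\alpha\mu_\alpha^2\sim\lambda_1$ must be made quantitative with constants independent of $B$.
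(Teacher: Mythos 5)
There is a genuine gap at the heart of your argument, namely at the point where you fix a \emph{single} small $\gamma$. Your justification rests on the claims that $\sum_{\alpha>1}\lambda_{\alpha}\mu_{\alpha}^{2}=u_{JV_{1}JV_{1}}$ is bounded by a constant (``no $\lambda_{\alpha}$ with $\alpha>1$ is large'') and hence that both $\tilde{g}_{\tilde{1}\bar{\tilde{1}}}$ and $\lambda_{1}-\sum_{\alpha>1}\lambda_{\alpha}\mu_{\alpha}^{2}$ are comparable to $\lambda_{1}$ with constants close to $1$. Neither claim is true: the bound $\sum_{\beta}\lambda_{\beta}\geq -C$ only prevents the eigenvalues from being very negative in the aggregate, and $\lambda_{2}$ may perfectly well be of size $\lambda_{1}$; conversely $u_{JV_{1}JV_{1}}$ can be as negative as $-\lambda_{1}-C$, since only the combination in \eqref{4.32}, $0<\tilde{g}_{\tilde{1}\bar{\tilde{1}}}\leq \frac{1}{2}(\lambda_{1}+\sum_{\alpha>1}\lambda_{\alpha}\mu_{\alpha}^{2})+C$, is controlled. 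Thus $\lambda_{1}-\sum_{\alpha>1}\lambda_{\alpha}\mu_{\alpha}^{2}$ can be anywhere between $o(\lambda_{1})$ and roughly $2\lambda_{1}$, while $\tilde{g}_{\tilde{1}\bar{\tilde{1}}}$ can be anywhere between $O(1)$ and roughly $\lambda_{1}$. Moreover, even granting your comparabilities, the absorption of the $(1-\ve)(1+\frac{1}{\gamma})(\lambda_{1}-\sum_{\alpha>1}\lambda_{\alpha}\mu_{\alpha}^{2})$--term into the good $(2-\ve)$--term is arithmetically wrong: both are the \emph{same} sum $\sum_{i\notin I}\sum_{\alpha>1}\frac{G^{i\bar{i}}}{\lambda_{1}}\frac{|e_{i}(u_{V_{\alpha}V_{1}})|^{2}}{\lambda_{1}-\lambda_{\alpha}}$ with coefficients $(1-\ve)(1+\frac{1}{\gamma})\frac{\lambda_{1}-\sum_{\alpha>1}\lambda_{\alpha}\mu_{\alpha}^{2}}{\lambda_{1}}$ and $2-\ve$ respectively, and the ratio $\frac{\lambda_{1}-\sum_{\alpha>1}\lambda_{\alpha}\mu_{\alpha}^{2}}{\lambda_{1}}$ is $O(1)$, not $O(1/\lambda_{1})$, so assuming $\lambda_{1}\geq C/\ve^{3}$ does not help; with $\gamma\lesssim\ve$ (in fact your own bound $\tilde{g}_{\tilde{1}\bar{\tilde{1}}}\leq(1+\ve)\lambda_{1}$ already forces $\gamma\lesssim\ve^{2}$) the bad coefficient is of size $1/\ve$, which can never be $\leq 2-\ve$. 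In short, no single small $\gamma$ can simultaneously tame the $(1+\gamma)\tilde{g}_{\tilde{1}\bar{\tilde{1}}}$--term and the $(1+\frac{1}{\gamma})$--term.

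The paper resolves precisely this tension by a dichotomy, and this is the idea missing from your proposal. Either \eqref{5.32} holds, i.e. $\lambda_{1}+\sum_{\alpha>1}\lambda_{\alpha}\mu_{\alpha}^{2}\geq 2(1-\ve)\tilde{g}_{\tilde{1}\bar{\tilde{1}}}$, and then the balanced choice $\gamma=\frac{\lambda_{1}-\sum_{\alpha>1}\lambda_{\alpha}\mu_{\alpha}^{2}}{\lambda_{1}+\sum_{\alpha>1}\lambda_{\alpha}\mu_{\alpha}^{2}}$ makes both cancellations exact, since $(1-\ve)(1+\gamma)\tilde{g}_{\tilde{1}\bar{\tilde{1}}}\leq\lambda_{1}$ by \eqref{5.32} and $(1-\ve)(1+\frac{1}{\gamma})(\lambda_{1}-\sum_{\alpha>1}\lambda_{\alpha}\mu_{\alpha}^{2})=2(1-\ve)\lambda_{1}\leq(2-\ve)\lambda_{1}$; or \eqref{4.31-1} holds, in which case \eqref{4.32} forces $\tilde{g}_{\tilde{1}\bar{\tilde{1}}}\leq C/\ve$ and $\lambda_{1}-\sum_{\alpha>1}\lambda_{\alpha}\mu_{\alpha}^{2}\leq(2+2\ve^{2})\lambda_{1}$, so one takes $\gamma=1/\ve^{2}$ \emph{large} (not small), absorbs the third-order term into the $(2-\ve)$--term, and absorbs the now harmless $\tilde{g}_{\tilde{1}\bar{\tilde{1}}}$--term into the positive term $\frac{2}{\lambda_{1}}\sum_{k\in I,i\notin I}G^{i\bar{i}}\tilde{g}^{k\bar{k}}|V_{1}(\tilde{g}_{i\bar{k}})|^{2}$ of Lemma~\ref{key inequality}, which is where the hypothesis $\lambda_{1}\geq C/\ve^{3}$ is really used. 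Your final step, converting $-3\ve\sum_{i\notin I}G^{i\bar{i}}|e_{i}(\lambda_{1})|^{2}/\lambda_{1}^{2}$ into the two terms of the statement via the critical equation \eqref{5..10} and Cauchy--Schwarz, is correct and coincides with the paper's reduction to \eqref{4.30}; but without the case analysis the core cancellation does not close.
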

\textit{Proof.} It suffices to prove
\begin{equation}\label{4.30}
\begin{split}
(2-\ve)&\sum_{\alpha>1}{G}^{i\bar{i}}\frac{|e_{i}(u_{V_{\alpha}V_{1}})|^{2}}
{\lambda_{1}(\lambda_{1}-\lambda_{\alpha})}-\frac{1}{\lambda_{1}} {G}^{i\bar{k},j\bar{l}}V_{1}(\tilde{g}_{i\bar{k}})V_{1}(\tilde{g}_{j\bar{l}})\\&
-(1+\ve)\sum_{i\notin I} {G}^{i\bar{i}}\frac{|e_{i}(\lambda_{1})|^{2}}{\lambda_{1}^{2}}
\geq -3\ve \sum_{i\notin I} {G}^{i\bar{i}}\frac{|e_{i}(\lambda_{1})|^{2}}{\lambda_{1}^{2}}-\frac{C}{\ve} \mathcal{G}.
\end{split}
\end{equation}
We divide the proof into two conditions.
~\\
\textbf{Condition 1:} Assume that
\begin{equation}\label{5.32}
\lambda_{1}+\sum_{\alpha>1}\lambda_{\alpha}\mu_{\alpha}^{2}\geq 2(1-\ve)\tilde{g}_{\tilde{1}\bar{\tilde{1}}}>0.
\end{equation}
\textit{Proof of Condition 1} It follows from Lemma \ref{key inequality} and (\ref{5.32}) that
\begin{equation}\label{5.76}
\begin{split}
    & (2-\ve)\sum_{\alpha>1}{G}^{i\bar{i}}\frac{|e_{i}(u_{V_{\alpha}V_{1}})|^{2}}{\lambda_{1}(\lambda_{1}-\lambda_{\alpha})}
-\frac{1}{\lambda_{1}} {G}^{i\bar{k},j\bar{l}}V_{1}(\tilde{g}_{i\bar{k}})V_{1}(\tilde{g}_{j\bar{l}})\\
    & -(1+\ve)\sum_{i\notin I} {G}^{i\bar{i}}\frac{|e_{i}(\lambda_{1})|^{2}}{\lambda_{1}^{2}}\\
    \geq &\sum_{i\notin I}\sum_{\alpha>1} \frac{{G}^{i\bar{i}}}{\lambda_{1}^{2}}\left(\frac{(2-\ve
    )\lambda_{1}}{\lambda_{1}-\lambda_{\alpha}}|e_{i}(u_{V_{\alpha}V_{1}})|^{2}\right)
    +\sum_{k\in I,i\notin I}\frac{2}{\lambda_{1}} {G}^{i\bar{i}}\tilde{g}^{k\bar{k}}|V_{1}(\tilde{g}_{i\bar{k}})|^{2}\\
    &-3\ve\sum_{i\notin I}{G}^{i\bar{i}}\frac{|e_{i}(\lambda_{1})|^{2}}{\lambda_{1}^{2}}-
    (1+{\gamma})(\lambda_{1}+\sum_{\alpha>1}\lambda_{\alpha}\mu_{\alpha}^{2})\sum_{k\in I,i\notin I}{G}^{i\bar{i}}\tilde{g}^{k\bar{k}}\frac{|V_{1}(\tilde{g}_{i\bar{k}})|^{2}}{\lambda_{1}^{2}}\\
    &-\frac{C}{\ve}\mathcal{G}-
    (1-\ve)(1+\frac{1}{\gamma})(\lambda_{1}-\sum_{\alpha>1}\lambda_{\alpha}\mu_{\alpha}^{2})
    \sum_{i\notin I}\sum_{\alpha>1}\frac{{G}^{i\bar{i}}}{\lambda_{1}^{2}}\frac{|e_{i}(u_{V_{\alpha}V_{1}})|^{2}}{\lambda_{1}-\lambda_{\alpha}}.
\end{split}
\end{equation}
We only need to choose
\[
\gamma=\frac{\lambda_{1}-\underset{\alpha>1}{\sum}\lambda_{\alpha}\mu_{\alpha}^{2}}
{\lambda_{1}+\underset{\alpha>1}{\sum}\lambda_{\alpha}\mu_{\alpha}^{2}}.
\]~\\
Then on the right hand of (\ref{5.76}), the first term cancels the last term and the second term cancels the fourth term. This proves  (\ref{4.30}).
\qed
~\\
\textbf{Condition 2:}  Assume that
\begin{equation}\label{4.31-1}
{\lambda_{1}+\sum_{\alpha>1}\lambda_{\alpha}\mu_{\alpha}^{2}}< 2(1-\ve)\tilde{g}_{\tilde{1}\bar{\tilde{1}}}.
\end{equation}
~\\
\textit{Proof of Condition 2} By a similar calculation in \cite{CTW}, we have
\begin{equation}\label{4.32}
0<\tilde{g}_{\tilde{1}\bar{\tilde{1}}}\leq \frac{1}{2}(\lambda_{1}+\sum_{\alpha>1}\lambda_{\alpha}\mu_{\alpha}^{2})+C.
\end{equation}
Plugging it into (\ref{4.31-1}), then
$\lambda_{1}+\sum_{\alpha>1} \lambda_{\alpha}\mu_{\alpha}^{2}\geq -C$
and $\tilde{g}_{\tilde{1}\bar{\tilde{1}}}\leq C/{\ve}$. Hence,
\[
0<\lambda_{1}-\sum_{\alpha>1} \lambda_{\alpha}\mu_{\alpha}^{2}\leq 2\lambda_{1}+C\leq (2+2\ve^{2})\lambda_{1}
\]
provided $\lambda_{1}\geq C/{\ve^{2}}$. Choose $\gamma=1/{\ve^{2}}$, it follows that
\[
\begin{split}
(1-\ve)(1+\frac{1}{\gamma})(\lambda_{1}-\sum_{\alpha>1}\lambda_{\alpha}\mu_{\alpha}^{2})
 \leq & 2(1-\ve)(1+\ve^{2})^{2}\lambda_{1}
 \leq (2-\ve)\lambda_{1}.
 \end{split}
\]
Plugging it into Lemma \ref{key inequality} yields
\[
\begin{split}
    & (2-\ve)\sum_{\alpha>1}{G}^{i\bar{i}}\frac{|e_{i}(u_{V_{\alpha}V_{1}})|^{2}}{\lambda_{1}(\lambda_{1}-\lambda_{\alpha})}
-\frac{1}{\lambda_{1}} {G}^{i\bar{k},j\bar{l}}V_{1}(\tilde{g}_{i\bar{k}})V_{1}(\tilde{g}_{j\bar{l}}) -(1+\ve)\sum_{i\notin I} {G}^{i\bar{i}}\frac{|e_{i}(\lambda_{1})|^{2}}{\lambda_{1}^{2}}\\
    \geq &2\sum_{k\in I,i\notin I} {G}^{i\bar{i}}\tilde{g}^{k\bar{k}}\frac{|V_{1}(\tilde{g}_{i\bar{k}})|^{2}}{\lambda_{1}}-3\ve\sum_{i\not\in I}{G}^{i\bar{i}}\frac{|e_{i}(\lambda_{1})|^{2}}{\lambda_{1}^{2}}\\
    &-2(1-\ve)(1+\frac{1}{\ve^{2}})\tilde{g}_{\tilde{1}\bar{\tilde{1}}}\sum_{k\in I,i\not\in I}{G}^{i\bar{i}}\tilde{g}^{k\bar{k}}\frac{|V_{1}(\tilde{g}_{i\bar{k}})|^{2}}{\lambda_{1}^{2}}    -\frac{C}{\ve}\mathcal{G}\\
    \geq &2\sum_{k\in I,i\not\in I} {G}^{i\bar{i}}\tilde{g}^{k\bar{k}}\frac{|V_{1}(\tilde{g}_{i\bar{k}})|^{2}}{\lambda_{1}} -3\ve\sum_{i\not\in I}{G}^{i\bar{i}}\frac{|e_{i}(\lambda_{1})|^{2}}{\lambda_{1}^{2}}\\
    &-(1-\ve)(1+\frac{1}{\ve^{2}})\frac{C}{\ve}\sum_{k\in I}\sum_{i\not\in I}{G}^{i\bar{i}}\tilde{g}^{k\bar{k}}\frac{|V_{1}(\tilde{g}_{i\bar{k}})|^{2}}{\lambda_{1}^{2}}    -\frac{C}{\ve}\mathcal{G}\\
    \geq &-3\ve\sum_{i\not\in I}{G}^{i\bar{i}}\frac{|e_{i}(\lambda_{1})|^{2}}{\lambda_{1}^{2}}-\frac{C}{\ve}\mathcal{G},\\
\end{split}
\]
~\\
if we assume $\lambda_{1}\geq C/{\ve^{3}}$ in the last inequality. This proves  (\ref{4.30}) and hence the proof of the lemma is completely.\qed
~\\

We now going to complete the proof of second order estimate. Plugging Lemma \ref{Lma5.6} into (\ref{4.7"}) we have
\begin{equation}\label{}
\begin{split}
\mathcal{L}(Q)  \geq & -6\ve B^{2}e^{2B\widetilde{\eta}} {G}^{i\bar{i}}|e_{i}(\widetilde{\eta})|^{2}-6\ve (\phi')^{2}\sum_{i\not\in I}{G}^{i\bar{i}}|e_{i}(|\partial u|^{2})|^{2}-\frac{C}{\ve}\mathcal{G}\\
&+ \frac{\phi'}{2}\sum_{j} {G}^{i\bar{i}}(|e_{i}e_{j}u|^{2}+|e_{i}\bar{e}_{j}u|^{2})
+B^{2}e^{B\widetilde{\eta}} {G}^{i\bar{i}}|e_{i}(\widetilde{\eta})|^{2}+Be^{B\widetilde{\eta}}\mathcal{L}(\widetilde{\eta})\\
& +\phi''{G}^{i\bar{i}}|e_{i}(|\partial u|^{2})|^{2}-2(\phi')^{2}\sum_{i\in I}{G}^{i\bar{i}}|e_{i}(|\partial u|^{2})|^{2}.
\end{split}
\end{equation}
Choosing $\ve< \min\{\frac{1}{6n},\frac{\theta}{6}\}$ such that $e^{B\widetilde{\eta}(0)}=\frac{1}{6\ve}$ (this is possible if $B$ large) and by $\phi''=2(\phi')^{2}$, then
\[
\frac{B}{6\ve}\mathcal{L}(\widetilde{\eta})-\frac{C}{\ve}\mathcal{G}+ \frac{\phi'}{2}\sum_{j} {G}^{i\bar{i}}(|e_{i}e_{j}u|^{2}+|e_{i}\bar{e}_{j}u|^{2})\leq 0.
\]
(a). Suppose (\ref{2..25}) holds. Then we have
\[
(\frac{B\theta}{6\ve}-\frac{C}{\ve})\mathcal{G}+ \frac{\phi'}{2}\sum_{j} {G}^{i\bar{i}}(|e_{i}e_{j}u|^{2}+|e_{i}\bar{e}_{j}u|^{2})\leq 0.
\]
~\\
Choosing $B$ sufficiently large and $\ve<\theta/6$ is small enough such that $B\theta/6-C\geq B\ve$. Then at the origin we have
\[
0\geq B\mathcal{G}+ \frac{\phi'}{2}\sum_{j} {G}^{i\bar{i}}(|e_{i}e_{j}u|^{2}+|e_{i}\bar{e}_{j}u|^{2}).
\]
This yields a contradiction.
~\\
(b). Suppose  (\ref{2..26}) holds. With the aid of (\ref{4.31}) then we have
\[
\frac{B}{6\ve}(\tau\mathcal{G}-C)-\frac{C}{\ve}\mathcal{G}+ \frac{\phi'}{2}\sum_{j} {G}^{i\bar{i}}(|e_{i}e_{j}u|^{2}+|e_{i}\bar{e}_{j}u|^{2})\leq 0.
\]
Hence
\[
\frac{\phi'}{2}\sum_{j} {G}^{i\bar{i}}(|e_{i}e_{j}u|^{2}+|e_{i}\bar{e}_{j}u|^{2})\leq CBe^{B\widetilde{\eta}}
\]
if we assume $ B\tau/6>C$. Using the fact
${G}^{i\bar{i}}\geq \theta\mathcal{G}\geq \theta\Theta$ for  each $i$.
Therefore,
\begin{equation}\label{}
\frac{\phi'\theta\Theta}{2}\sum_{i,j} (|e_{i}e_{j}u|^{2}+|e_{i}\bar{e}_{j}u|^{2})\leq CBe^{B\widetilde{\eta}}.
\end{equation}
The rest of proof can be found in Subcase 1.1, so we omit it here. 
\qed
\section{Proof of Theorem 1.2}
 Our proof is close to the Hermitian setting which was given by Sun \cite{Sun1}. 
For an arbitrary admissible function $v$ satisfies  \eqref{supersolution}. Define
$\tilde{\psi}=\frac{\chi_{v}^{n}}{\chi_{v}^{n-m}\wedge\omega^{m}}\leq \psi$. Consider the flow
\begin{equation}\label{flow}
\chi_{v+u_{t}}^{n}=\psi^{t}\tilde{\psi}^{1-t}e^{b_{t}}\chi_{v+u_{t}}^{n-m}\wedge\omega^{m}, ~\textrm{for}~t\in [0,1],
\end{equation}
with $u_{t}\in \mathcal{H}(M,\chi_{v})$. Here $b_{t}$ is a constant for each $t$ with $b_{0}=0$.

Denote $\psi_{t}=\psi^{t}\tilde{\psi}^{1-t}e^{b_{t}}$. We claim that
\begin{equation}\label{}
\psi_{t}\leq \psi, ~\textrm{for}~t\in [0,1].
\end{equation}
Indeed, at the maximum point of $u_{t}$, $\ddbar u_{t}\leq 0$. Using the monotonicity of $F$ we have $\psi_{t}\leq \tilde{\psi}$. That is, $e^{b_{t}}\leq \psi^{-t}\tilde{\psi}^{t}\leq 1$ since $\tilde{\psi}\leq \psi$. This proves the claim.

Similarly, at the minimum point of $u_{t}$, $\ddbar u_{t}\geq 0$. We can also obtain a lower bound for $b_{t}$, this yields a uniform bound for $b_{t}$.

If we set $h_{t}$ by $\psi_{t}={n\choose m}h_{t}^{m}$, by the previous claim we have $h_{t}\leq h.$ Then the hypersurface $\Gamma^{h}$ lies above $\Gamma^{h_{t}}$.  Therefore, $\underline{u}$ is still  the $\mathcal{C}$-subsolution for the flow \eqref{flow}. Hence, we have uniform $C^{\infty}$ estimates for $u_{t}$. 

Define a nonempty set (including $0$)
\begin{equation}\label{}
\mathcal{T}=\big\{t'\in[0,1]:~\exists u_{t}\in C^{3,\alpha}(M), b_{t} ~\textrm{solves}~ \eqref{flow} ~\textrm{for}~ t\in[0,t']\big\}.
\end{equation}
To solve the equation \eqref{dp}, it suffices to check $\mathcal{T}$ is both closed and open.
The closedness is easily from the uniform bounds for $b_{t}$ and $u_{t}\in C^{3,\alpha}(M)$.

Now we shall prove $\mathcal{T}$ is open. That is, for each $\hat{t}\in \mathcal{T}$, we have $[\hat{t},\hat{t}+\delta)\subset \mathcal{T}$ for some $\delta>0$.
Set $G(u)=\frac{\chi_{v+u}^{n}}{\chi_{v+u}^{n-m}\wedge \omega^{m}}$ on $\mathcal{H}(M,\chi_{v})$. Define an almost Hermitian metric
\[\Omega=\sqrt{-1}\sum_{i,j}G_{i\bar{j}}(u_{\hat{t}})\theta_{i}\wedge\bar{\theta}_{j}.\]
Here $G^{i\bar{j}}=\frac{\partial \log G}{\partial u_{i\bar{j}}}$ and $\{G_{i\bar{j}}\}$ is the inverse matrix of $\{G^{i\bar{j}}\}$. By Cauduchon's work (see \cite[Theorem 2.1]{CTW} for instance), there exists a potential function $\phi\in C^{\infty}(M)$ such that $e^{\phi}\Omega$ is a Gauduchon metric. We may normalize $\phi$ by adding a constant such that $\int_{M}e^{(n-1)\phi}\Omega^{n}=1$.

Note that the flow \eqref{flow} on $[\hat{t},\hat{t}+\delta)$ is equivalent to
\begin{equation}\label{IFT}
G(u_{t})=G(u_{\hat{t}})\psi^{t-\hat{t}}\tilde{\psi}^{-t+\hat{t}}e^{c_{t}}
\Big(\int_{M}\frac{G(u_{t})}{G(u_{\hat{t}})}e^{(n-1)\phi}\Omega^{n}\Big)
\end{equation}
for some constant $c_{t}$ satisfies
\[\int_{M}\psi^{t-\hat{t}}\tilde{\psi}^{-t+\hat{t}}e^{c_{t}}e^{(n-1)\phi}\Omega^{n}=1.\]
Define a map $\Psi$ by
\[\Psi(\eta)=\log \frac{G(u_{\hat{t}}+\eta)}{G(u_{\hat{t}})}-\log\Big(\int_{M}\frac{G(u_{\hat{t}}+\eta)}{G(u_{\hat{t}})}e^{(n-1)\phi}\Omega^{n}\Big),\]
which maps $\eta\in C^{3,\alpha}(M)$ with $\int_{M}\eta e^{(n-1)\phi}\Omega^{n}=0$ and $\chi_{\eta+u_{\hat{t}}+v}>0$ to $\Psi(\eta)\in C^{1,\alpha}$ with $\int_{M}e^{\Psi(\eta)+(n-1)\phi}\Omega^{n}=1$. Observe that $\Psi(0)=0$, and the linearization of $\Psi$ at $\eta=0$ is given by
\[D\Psi(0)(\xi)=\frac{n\Omega^{n-1}\wedge \ddbar \xi}{\Omega^{n}}=\Delta_{\Omega}\xi,\]
since $\int_{M}\ddbar \xi\wedge e^{(n-1)\phi}\Omega^{n-1}=0$. By another result of Gauduchon (see \cite[Theorem 2.2]{CTW}), the Laplacian operator $\Delta_{\Omega}$ is invertible. Now by inverse function theorem, we can solve \eqref{IFT} on $[\hat{t},\hat{t}+\delta)$ for some $\delta>0$ small.
This completes the proof of the theorem.

\end{CJK}
\end{document}